\theoremstyle{plain}
\newtheorem{theorem}{Theorem}[section]
\newtheorem{lemma}[theorem]{Lemma}
\newtheorem{proposition}[theorem]{Proposition}
\theoremstyle{definition}
\newtheorem{definition}[theorem]{Definition}
\theoremstyle{remark}
\newtheorem{remark}[theorem]{Remark}
\numberwithin{equation}{section}
\newcommand{\R}{\mathbb{R}}
\newcommand{\Cinf}{C_c^\infty}
\newcommand{\tdom}{(0,T)}
\newcommand{\odom}{\tdom\times\Omega}
\newcommand{\du}{\delta u}
\newcommand{\M}{\mathcal{M}} %Radon Measures%
\newcommand{\E}{\mathcal{E}}
\newcommand{\veps}{\varepsilon}
\newcommand{\vrho}{\varrho}
\newcommand{\epso}{{ 0,\varepsilon}}
\newcommand{\half}{\frac{1}{2}}
\newcommand{\dx}{\mathrm{d}x}
\newcommand{\dt}{\mathrm{d}t}
\newcommand{\psig}{h_\gamma}
\newcommand{\tu}{\Tilde{{u}}}
\newcommand{\tv}{\Tilde{{v}}}
\newcommand{\X}{\mathcal{X}}
\newcommand{\delt}{\Delta t}
\newcommand{\delx}{\Delta x}
\newcommand{\dely}{\Delta y}
\newcommand{\abs}[1]{\left\lvert#1\right\rvert}
\newcommand{\Dt}{\partial_t}
\newcommand{\Dx}{\partial_x}
\newcommand{\Dy}{\partial_y}
\title[AP and Energy Stable Scheme]{An Asymptotic Preserving and Energy Stable Scheme for the Euler System with Congestion Constraint}
\author[Arun]{K.R.\ Arun}
\address{School of Mathematics, Indian Institute of Science Education
  and Research Thiruvananthapuram, Thiruvananthapuram 695551, India} 
\email{arun@iisertvm.ac.in, amoghk0720@iisertvm.ac.in, harihara22@iisertvm.ac.in}
\thanks{K.\,R.\,A.\ acknowledges the support from Science and Engineering Research Board, Department of Science \& Technology, Government of India through grant CRG/2021/004078.}
\author[Krishnamurthy]{A.\ Krishnamurthy}
\author[Maharna]{H.\ Maharna}
\keywords{Isentropic Euler system, Congestion pressure, Singular limit, Staggered finite volume method, Asymptotic Preserving, Energy stability}
\subjclass{35L65, 35Q31, 65M08, 76M12}
\date{\today}
\begin{document}

\begin{abstract}
    In this work, we design and analyze an asymptotic preserving (AP), semi-implicit finite volume scheme for the scaled compressible isentropic Euler system with a singular pressure law known as the congestion pressure law. The congestion pressure law imposes a maximal density constraint of the form $0\leq \vrho <1$, and the scaling introduces a small parameter $\veps$ in order to control the stiffness of the density constraint. As $\veps\to 0$, the solutions of the compressible system converge to solutions of the so-called free-congested Euler equations that couples compressible and incompressible dynamics. We show that the proposed scheme is positivity preserving and energy stable. In addition, we also show that the numerical densities satisfy a discrete variant of the constraint. By means of extensive numerical case studies, we verify the efficacy of the scheme and show that the scheme is able to capture the two dynamics in the limiting regime, thereby proving the AP property. 
\end{abstract}

\maketitle

\section{Introduction}

Several macroscopic models of transport involve steric restrictions which prevents the overlapping of individual constituents within the model. Such restrictions give rise to what is known as the congestion phenomenon, which can impose several physical restraints on the model. Examples of such constraints would be the maximal density constraint that occurs when modelling traffic or pedestrian flows \cite{BDL+08, BGP+17}, constraints on the concentrations of individual components when modelling multiphase flows \cite{BBC+00} or the flux constraints that occur for supply chain problems \cite{ADR06}. In the present work, we focus on the one-dimensional isentropic Euler system with a singular pressure law, known as the congestion pressure law, as a hydrodynamic model of transport. The system of equations reads
\begin{subequations}
\label{eqn:eul-sys}
    \begin{gather}
        \Dt\vrho_\veps + \Dx(\vrho_\veps u_\veps) = 0, \label{eqn:mss-bal} \\
        \Dt(\vrho_\veps u_\veps) + \Dx(\vrho_\veps u^2_\veps) + \Dx(p_\veps(\vrho_\veps)) = 0, \label{eqn:mom-bal} \\
        \vrho_\veps (0,\cdot) = \vrho_\epso,\quad u_\veps(0,\cdot) = u_\epso. \label{eqn:ini-data}
    \end{gather}
\end{subequations}
The symbols $\vrho_\veps = \vrho_\veps(t,x)$ and $u_\veps = u_\veps(t,x)$ stand for the density and velocity respectively. Here, $p_\veps(\vrho_\veps)$ is the congestion pressure and is defined by
\begin{equation}
    \label{eqn:cong-pres}
    p_\veps(\vrho_\veps) = \veps \biggl(\frac{\vrho_\veps}{1-\vrho_\veps}\biggr)^\gamma;\quad \gamma\in(1,3\rbrack.
\end{equation}
The pressure law \eqref{eqn:cong-pres} imposes a constraint on the density of the form $\vrho_\veps<1$, and the upper bound $\vrho^* = 1$ is known as the congestion density. The model \eqref{eqn:eul-sys}-\eqref{eqn:cong-pres} is known to support the formation of congested regions, i.e.\ the regions where the density is close to 1; see, e.g.\ \cite{BP21, DHN11, DMN+18, DNB+10}. The congestion pressure law can be viewed as the average of all the short-range repulsive forces acting at the microscopic level. Hence, the singularity in the pressure plays the role of a barrier and as such, does not allow any overlap between singular molecules. In order to control the strength of these repulsive forces and the stiffness of the density constraint, we introduce a small parameter $\veps>0$ as done in \cite{DHN11, DMN+18, PS22}. One can observe that $p_\veps(\vrho_\veps)\sim\vrho_\veps^\gamma$ if $\vrho_\veps\ll 1$ and $p_\veps(\vrho_\veps)\to\infty$ if $\vrho_\veps\to 1$. In addition, due to the re-scaling, the pressure is only activated in congested regions, i.e.\ regions where $\vrho_\veps\sim 1$, whereas it will be of order $O(\veps)$ in the low-density regions; see Figure~\ref{fig:rescale_p} for a plot of the pressure function $p_\veps$ for different values of $\veps$. This behaviour of the pressure induces the formation of two vastly differing dynamics, namely the dynamics in the congested region and the free dynamics in the low-density regions. 

\begin{figure}
    \centering
    \includegraphics[width = 0.5\textwidth]{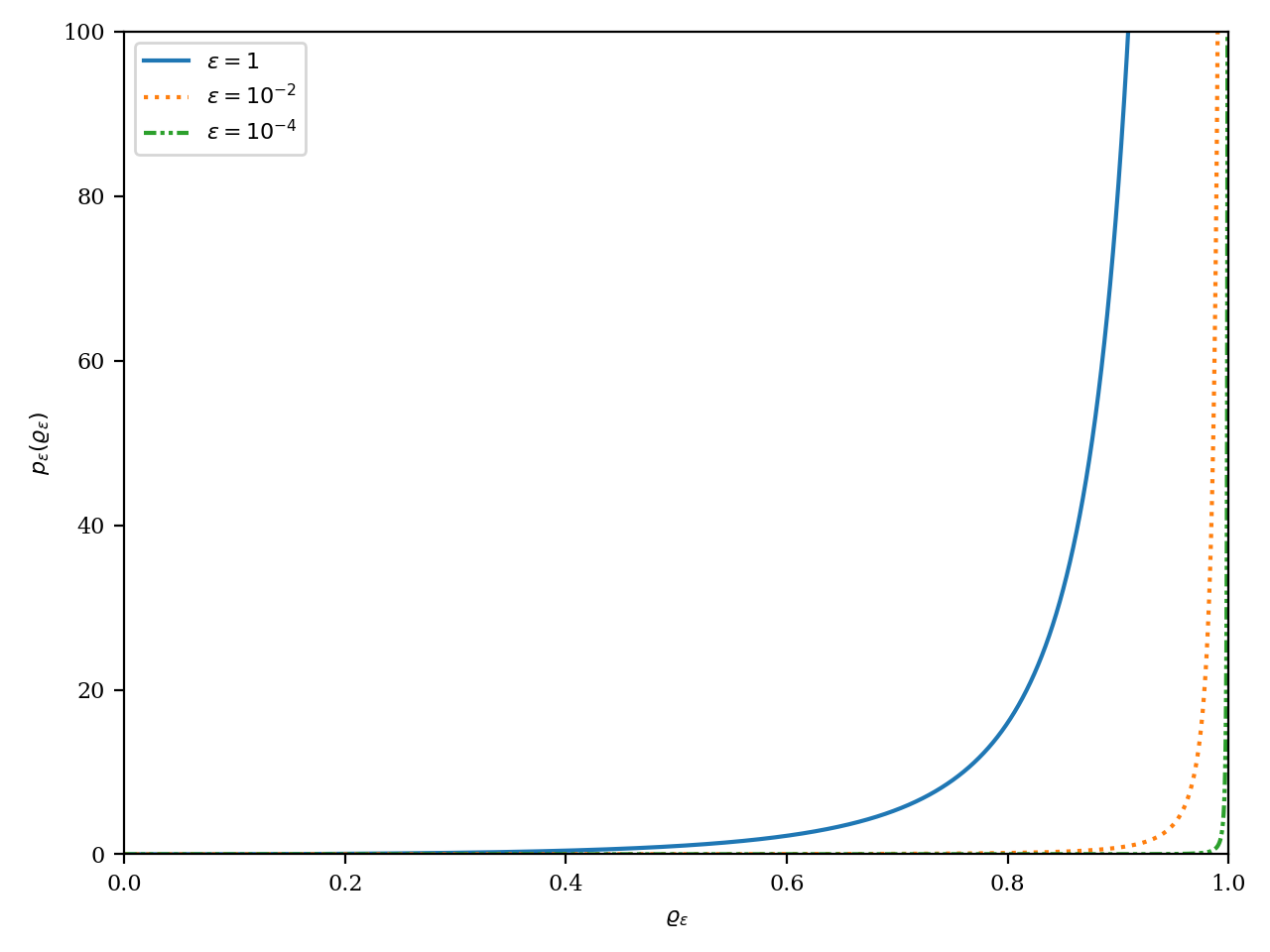}
    \caption{The re-scaled pressure $p_\veps(\vrho_\veps)$ in \eqref{eqn:cong-pres} for $\gamma = 2$ and different values of $\veps$.}
    \label{fig:rescale_p}
\end{figure}

Formally speaking, performing the limit $\veps\to 0$ of \eqref{eqn:eul-sys}-\eqref{eqn:cong-pres} leads to the following system of equations:  
\begin{subequations}
\label{eqn:lim-sys}
    \begin{gather}
        \Dt\vrho + \Dx(\vrho u) = 0, \label{eqn:mss-cong-lim} \\
        \Dt(\vrho u) + \Dx(\vrho u^2) + \Dx\pi = 0, \label{eqn:mom-cong-lim} \\
        0\leq\vrho\leq 1, \quad (1-\vrho)\pi = 0, \quad \pi\geq 0,\label{eqn:exclu-cong-lim} \\
        \vrho(0,\cdot) = \vrho_0,\quad u(0,\cdot) = u_0. \label{eqn:ini-data-cong-lim}
    \end{gather}
\end{subequations}
The above system is known as the free-congested one-dimensional Euler equations and was first introduced by Bouchut et al.\ \cite{BBC+00} as an asymptotic model for two phase flows; see also \cite{BP21, BPZ14, DHN11, DMN+18, PS22, PZ15} for more details. The limiting pressure $\pi$ is the limit of $p_\veps(\vrho_\veps)$ as $\veps\to 0$, and one can observe that if the limiting density $\vrho < 1$, then $\pi = 0$. Consequently, the limiting pressure is only activated in the congested regions where $\vrho = 1$ and hence, an exclusion constraint of the form $(1-\vrho)\pi = 0$ is part of the system \eqref{eqn:mss-cong-lim}-\eqref{eqn:ini-data-cong-lim}. If $\vrho<1$, we obtain the weakly hyperbolic system of equations of compressible, presureless dynamics and if $\vrho = 1$, we obtain the mixed hyperbolic-elliptic incompressible Euler system. Therefore, the limit system \eqref{eqn:mss-cong-lim}-\eqref{eqn:ini-data-cong-lim} is a hybrid system, coupling the free dynamics in the regions where $\vrho < 1$, with the congested, incompressible dynamics in the congested regions of $\vrho = 1$. Furthermore, since the original system \eqref{eqn:mss-bal}-\eqref{eqn:ini-data} is purely hyperbolic in nature, the limit $\veps\to 0$ is a singular limit as the system \eqref{eqn:mss-cong-lim}-\eqref{eqn:ini-data-cong-lim} is no longer hyperbolic.

The existence of solutions, both weak and classical, for the system \eqref{eqn:mss-bal}-\eqref{eqn:ini-data} with the pressure law given by \eqref{eqn:cong-pres} is still an open problem for the multi-dimensional case. In the one-dimensional setting, Bianchini and Perrin \cite{BP21} showed the existence of smooth solutions to the system \eqref{eqn:mss-bal}-\eqref{eqn:ini-data} written in Lagrangian co-ordinates, namely the $p$-system. They also perform the singular limit $\veps\to 0$ and show that the smooth solutions to the $p$-system will converge to a weak solution of the Lagrangian variant of \eqref{eqn:mss-cong-lim}-\eqref{eqn:ini-data-cong-lim}. However, for the viscous counterpart, namely the Navier-Stokes equations with congestion pressure, the existence of strong solutions in one dimension and their asymptotic limit was rigorously shown by Bresch et al.\ \cite{BPZ14}. Further, Perrin and Zatorska \cite{PZ15} proved the existence of weak solutions and also performed the limit $\veps\to 0$ in the multi-dimensional case, wherein the congestion constraint was dependent on space, i.e.\ $\vrho^* = \vrho^*(x)$. In \cite{DMZ18}, the authors studied a more general case wherein the congestion density was allowed to vary with space and time, i.e.\ $\vrho^* = \vrho^*(t,x)$, and satisfied a transport equation. With this transport equation now a part of the system, the authors show the existence of weak solutions and further perform the singular limit $\veps\to 0$.

As the limit $\veps\to 0$ is singular in nature, one needs to design a sufficiently robust numerical scheme that is able to accurately approximate the solutions of \eqref{eqn:eul-sys}-\eqref{eqn:cong-pres} uniformly  in $\veps>0$. The scheme should also be able to capture the regime changes which occur when $\veps\to 0$. In addition to these, as the limit system \eqref{eqn:lim-sys} couples compressible and incompressible dynamics together, the scheme should be able to automatically capture the shift between these dynamics when $\veps\sim 0$. This is challenging because there is no apriori information available as to how the phase transitions will occur along the boundary that separates a low density region and a congested region. A generic yet powerful framework which encompasses all the above required qualities is provided by the so-called asymptotic preserving (AP) methodology, which is proven to be an efficient platform in order to design numerical schemes that accurately capture singular limits; see, e.g\ \cite{AGK23, DHN11, DT11, HLS21, Jin99} and the references therein for further details. Briefly, a scheme is said to be AP if the scheme reduces to a consistent and stable approximation of the limiting equations when the singular perturbation parameter, in this case $\veps$, goes to 0, and if the stability constraints on the scheme are independent of $\veps$.

In the context of Euler equations with the congestion pressure law, Degond et al.\ \cite{DHN11} devised two different numerical schemes based on the local Lax-Friedrichs fluxes for the one-dimensional case, and verified the asymptotic properties of the scheme via various case studies. Also, Degond et al.\ \cite{DMN+18} considered the variable congestion dependent on space and time, i.e. $\vrho^* = \vrho^*(t,x)$ which satisfies a transport equation, and constructed first and second order finite volume schemes. For the viscous counterpart, Perrin and Saleh \cite{PS22} presented two staggered schemes, a fully implicit scheme and a pressure correction scheme, for the multidimensional Navier-Stokes equations with congestion pressure law. Both schemes are shown to be energy stable, i.e.\ the solutions generated by both schemes satisfy a discrete variant of the energy balance. In addition, they are also able to prove that the numerical densities generated by the schemes are bounded above by a constant $C < 1$, meaning the density constraint is respected at the discrete level.

In the present work, we focus on designing a semi-implicit, AP, energy stable and a structure-preserving staggered finite volume scheme for the Euler equations with the congestion pressure law. The key to achieving energy stability and the positivity of density is the introduction of a shifted velocity in the convective fluxes of the mass and momenta, proportional to the stiff pressure gradient. In addition, we show that the numerical densities generated by the scheme are strictly less than 1. Further, the AP property of the scheme is verified via extensive numerical case studies involving the congested as well as the low-density regimes.

The rest of the paper is structured as follows. In Section \ref{sec:apri-est-vel-stab}, we recall a few a priori energy estimates satisfied by smooth solutions of system \eqref{eqn:eul-sys} and further, also introduce the notion of velocity stabilization. The introduction of the finite volume scheme and its stability analysis is detailed in Section \ref{sec:fin-vol-scm}. The numerical results are presented in Section \ref{sec:num-res} and we conclude the paper with some remarks in Section \ref{sec:conc}.

\section{A priori Energy Estimates and Velocity Stabilization}
\label{sec:apri-est-vel-stab}

In this section, we recall the a priori energy estimates that are satisfied by classical solutions of \eqref{eqn:eul-sys}. In addition, we also introduce the concept of velocity stabilization, and show how it will help us achieve energy stability at the discrete level. To this end, we begin by defining the so-called pressure potential or the Helmholtz potential $\psig$, associated to the pressure $p_\veps$, such that
\begin{equation}
    z\psig^\prime(z) - \psig(z) = p_\veps(z),
\end{equation}
for $0\leq z<1$. One can easily infer that
\[
\psig^{\prime\prime}(z) = \frac{p^\prime_\veps(z)}{z}>0
\]
for any $0<z<1$. Hence, $z\mapsto\psig(z)$ is a convex function.

In addition, we also suppose that the initial data $(\vrho_\epso, u_\epso)$ to \eqref{eqn:eul-sys} satisfy 
\begin{equation}
\label{eqn:ini-data-hyp}
    \begin{split}
        &0\leq \vrho_\epso<1, \quad \overline{\vrho}_\epso\coloneqq \frac{1}{\abs{\Omega}}\int_\Omega\vrho_\epso\,\dx \leq \overline{\vrho} < 1, \\
        &\int_\Omega\biggl\lbrack\half\vrho_\epso(u_\epso)^2 + \psig(\vrho_\epso)\biggr\rbrack\,\dx\leq C
    \end{split}
\end{equation}
uniformly in $\veps$, where $\abs{\Omega}$ denotes the Lebesgue measure of $\Omega$ and $\overline{\vrho}, C>0$ are constants independent of $\veps$. 

\subsection{A priori energy estimates}
\label{subsec:apri-est}

We now recall the a priori energy estimates satisfied by smooth solutions of \eqref{eqn:eul-sys}.

\begin{proposition}[A priori energy estimates]
\label{prop:apri-eng-est}
    The smooth solutions of \eqref{eqn:eul-sys} satisfy
    \begin{enumerate}
        \item a renormalization identity: 
        \begin{equation}
        \label{eqn:renorm-idt}
		  \Dt(\psig(\vrho_\veps))+\Dx(\psig(\vrho_\veps)u_{\veps})+p_ \veps(\vrho_\veps) \Dx u_\veps=0; 
		\end{equation}
		
        \item a kinetic energy identity:  
        \begin{equation}
        \label{eqn:ke-idt}
            \Dt\biggl(\half\vrho_\veps (u_\veps)^2\biggr)+\Dx\biggl(\half\vrho_\veps(u_\veps)^2 u_\veps\biggr)+\Dx(p_\veps(\vrho_\veps))u_\veps=0; 
		\end{equation}
		
        \item a total energy identity:
		\begin{equation}
        \label{eqn:total-energy-idt}
		  \Dt E_\veps+\Dx((E_\veps+p_\veps(\vrho_\veps))u_\veps) =0,
		\end{equation}
		where $E_\veps=\half\vrho_\veps (u_\veps)^2+\psig(\vrho_\veps)$ is the total energy.
	\end{enumerate}
\end{proposition}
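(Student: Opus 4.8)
The plan is to obtain all three identities by elementary manipulations of the two balance laws in \eqref{eqn:eul-sys}, combined with the chain rule; throughout one uses tacitly that $(\vrho_\veps,u_\veps)$ is smooth and that $0\le\vrho_\veps<1$, so that $\psig$, $p_\veps$ and their derivatives are finite along the solution.

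\emph{Step 1 (renormalization identity).} Since $\psig$ is smooth on $[0,1)$, the chain rule gives $\Dt\psig(\vrho_\veps)=\psig'(\vrho_\veps)\Dt\vrho_\veps$ and $\Dx\psig(\vrho_\veps)=\psig'(\vrho_\veps)\Dx\vrho_\veps$. Multiplying the mass balance \eqref{eqn:mss-bal} by $\psig'(\vrho_\veps)$ and expanding $\psig'(\vrho_\veps)\Dx(\vrho_\veps u_\veps)=\psig'(\vrho_\veps)u_\veps\Dx\vrho_\veps+\vrho_\veps\psig'(\vrho_\veps)\Dx u_\veps$, I would then invoke the defining relation $z\psig'(z)-\psig(z)=p_\veps(z)$ to substitute $\vrho_\veps\psig'(\vrho_\veps)=\psig(\vrho_\veps)+p_\veps(\vrho_\veps)$. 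Recombining $\psig'(\vrho_\veps)\Dt\vrho_\veps$ with $\psig'(\vrho_\veps)u_\veps\Dx\vrho_\veps+\psig(\vrho_\veps)\Dx u_\veps$ into $\Dt\psig(\vrho_\veps)+\Dx(\psig(\vrho_\veps)u_\veps)$ yields exactly \eqref{eqn:renorm-idt}.

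\emph{Step 2 (kinetic energy identity).} Multiply the momentum balance \eqref{eqn:mom-bal} by $u_\veps$. Using the product rule and the mass balance, $u_\veps\Dt(\vrho_\veps u_\veps)+u_\veps\Dx(\vrho_\veps u_\veps^2)=\Dt\bigl(\half\vrho_\veps u_\veps^2\bigr)+\Dx\bigl(\half\vrho_\veps u_\veps^3\bigr)+\half u_\veps^2\bigl(\Dt\vrho_\veps+\Dx(\vrho_\veps u_\veps)\bigr)$, and the last bracket vanishes by \eqref{eqn:mss-bal}; keeping $\Dx(p_\veps(\vrho_\veps))u_\veps$ untouched gives \eqref{eqn:ke-idt}.

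\emph{Step 3 (total energy identity).} Add \eqref{eqn:renorm-idt} and \eqref{eqn:ke-idt}. The two pressure contributions combine as $p_\veps(\vrho_\veps)\Dx u_\veps+\Dx(p_\veps(\vrho_\veps))u_\veps=\Dx(p_\veps(\vrho_\veps)u_\veps)$, the time derivatives add to $\Dt E_\veps$ with $E_\veps=\half\vrho_\veps u_\veps^2+\psig(\vrho_\veps)$, and the remaining fluxes assemble into $\Dx((E_\veps+p_\veps(\vrho_\veps))u_\veps)$, which is \eqref{eqn:total-energy-idt}. There is no real obstacle here: the only point needing a little care is the algebraic use of the Helmholtz relation in Step~1, everything else being routine bookkeeping that I would carry out in full for Steps~1--2 and then simply add for Step~3.
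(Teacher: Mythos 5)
Your proof is correct and follows exactly the standard route the paper implicitly relies on (the proposition is stated without proof as a recollection, and the paper's discrete analogue is proved by the same recipe: multiply the mass balance by $\psig'(\vrho_\veps)$ and use the Helmholtz relation $z\psig'(z)-\psig(z)=p_\veps(z)$, multiply the momentum balance by $u_\veps$ and use the mass balance, then add). No gaps; all three steps check out.
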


The existence of smooth, global in-time solutions to the Lagrangian variant of the Euler system \eqref{eqn:eul-sys} was shown in \cite{BP21} for $\veps>0$ fixed. In addition, the authors of \cite{BP21} also illustrate the existence of global entropy weak solutions of \eqref{eqn:eul-sys}. In what follows, we define global entropy weak solutions of \eqref{eqn:eul-sys} and recall the existence result from \cite{BP21}.

\begin{definition}[Entropy weak solutions]
\label{def:ent-wk-sol}
    Let $(\vrho_\epso, q_\epso)\in (L^\infty(\R))^2$, where $q_\epso = \vrho_\epso u_\epso$ denotes the momentum, satisfy 
    \begin{equation}
    \label{eqn:wk-ini-hyp}
        0\leq \vrho_\epso \leq 1 - C\veps^{\frac{1}{\gamma - 1}}, \quad q_\epso\leq (1- C\veps^{\frac{1}{\gamma - 1}})\vrho_\epso
    \end{equation}
    a.e.\ on $\R$, where $C>0$ is a constant independent of $\veps$. 

    A pair $(\vrho_\veps, q_\veps)$, with $\vrho_\veps(0,\cdot) = \vrho_\epso$ and $q_\veps(0,\cdot) = q_\epso$, is a global entropy weak solution of \eqref{eqn:eul-sys}-\eqref{eqn:cong-pres} if the following hold:
    \begin{itemize}
        \item $(\vrho_\veps, q_\veps)\in (L^\infty((0,\infty)\times\R))^2$. Further, there exists a constant $A_\veps>0$ such that 
        \[
            0\leq\vrho_\veps\leq A_\veps<1,\quad\abs{q_\veps}\leq A_\veps \vrho_\veps
        \]
        a.e. in $(0,\infty)\times\R$;

        \item The mass balance \eqref{eqn:mss-bal} and the momentum balance \eqref{eqn:mom-bal} are satisfied in the weak sense, i.e.\ for all $\varphi,\psi\in\Cinf((0,\infty)\times\R)$,
        \begin{subequations}
            \begin{align}
                &\int_0^\infty\int_\R\vrho_\veps\Dt\varphi\,\dx\dt + \int_0^\infty\int_\R q_\veps\Dx\varphi \,\dx\dt = -\int_\R\vrho_\epso\varphi(0,x)\,\dx; \\
                &\int_0^\infty \int_\R q_\veps\Dt\psi\,\dx\dt + \int_0^\infty\int_\R\biggl(\frac{q_\veps^2}{\vrho_\veps} + p_\veps(\vrho_\veps)\biggr)\Dx\psi\,\dx\dt = -\int_\R q_\epso\psi(0,x)\,\dx;
            \end{align}
        \end{subequations}

        \item The entropy/energy inequality is satisfied, i.e.\ for all $\varphi\in\Cinf((0,\infty)\times\R)$,
        \begin{equation}
        \label{eqn:ent-ineq}
            -\int_0^\infty\int_\R\biggl(\half\frac{q_\veps^2}{\vrho_\veps} + \psig(\vrho_\veps)\biggr)\Dt\varphi\,\dx\dt - \int_0^\infty\int_\R\biggl\lbrack\biggl(\half\frac{q_\veps^2}{\vrho_\veps} + \psig(\vrho_\veps) + p_\veps(\vrho_\veps)\biggr)\frac{q_\veps}{\vrho_\veps}\biggr\rbrack\Dx\varphi\,\dx\dt \leq 0.
        \end{equation}
    \end{itemize}
\end{definition}

\begin{theorem}[Existence of global weak solutions]
\label{thm:glob-weak-sol}
    Consider the isentropic Euler system \eqref{eqn:eul-sys} with the pressure law given by \eqref{eqn:cong-pres}. Let $(\vrho_\epso,q_\epso)\in (L^\infty(\R))^2$ satisfy \eqref{eqn:wk-ini-hyp}. Then, there exists a global entropy weak solution $(\vrho_\veps,q_\veps)$ to \eqref{eqn:eul-sys}-\eqref{eqn:cong-pres} in the sense of Definition \ref{def:ent-wk-sol}. Further, the following inequality holds:
    \begin{equation}
    \label{eqn:wk-den-bound}
        0\leq\vrho_\veps\leq 1 - C\veps^{\frac{1}{\gamma - 1}}
    \end{equation}
    a.e.\ in $(0,\infty)\times\R$, where $C>0$ is a constant independent of $\veps$.
\end{theorem}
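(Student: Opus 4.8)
The plan is to construct the solution by the vanishing-viscosity method combined with the compensated-compactness framework for $2\times2$ systems of conservation laws, the decisive new ingredient being a Riemann-invariant (invariant-region) estimate that produces the $\veps$-uniform bound \eqref{eqn:wk-den-bound}. For $\veps>0$ fixed, I would first replace \eqref{eqn:eul-sys} by the parabolic regularization
\begin{equation*}
  \Dt\vrho^\nu+\Dx(\vrho^\nu u^\nu)=\nu\,\Dx^2\vrho^\nu,\qquad
  \Dt u^\nu+u^\nu\Dx u^\nu+\frac{p_\veps'(\vrho^\nu)}{\vrho^\nu}\,\Dx\vrho^\nu=\nu\,\Dx^2 u^\nu,
\end{equation*}
with data obtained by mollifying $(\vrho_\epso,q_\epso)$ (and, if convenient, shifting $\vrho$ away from $0$ and truncating $p_\veps$ near $\vrho=1$, a harmless device since the a priori bounds below show $\vrho^\nu$ never reaches the truncated range). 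A standard fixed-point/continuation argument then yields a smooth solution $(\vrho^\nu,u^\nu)$ on a maximal time interval, global once the uniform bounds are in hand.

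The heart of the matter is the $L^\infty$ estimate via an invariant region in the sense of Chueh--Conley--Smoller. Since $p_\veps$ is strictly increasing with $2p_\veps'(\vrho)+\vrho\,p_\veps''(\vrho)>0$ on $[0,1)$ (which one checks directly from \eqref{eqn:cong-pres}), the set $\{\vrho\ge 0,\ -\bar W\le w_-,\ w_+\le \bar W\}$, with $w_\pm:=u\pm k_\veps(\vrho)$ and $k_\veps(\vrho):=\int_0^{\vrho}\sqrt{p_\veps'(s)}\,s^{-1}\,\mathrm ds$, is positively invariant for the regularized flow, uniformly in $\nu$, provided $\bar W$ dominates the initial data. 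Now $\sqrt{p_\veps'(\vrho)}=\sqrt{\veps\gamma}\,\vrho^{(\gamma-1)/2}(1-\vrho)^{-(\gamma+1)/2}$, hence $k_\veps(\vrho)\sim \tfrac{2\sqrt{\veps\gamma}}{\gamma-1}(1-\vrho)^{-(\gamma-1)/2}$ as $\vrho\to 1$; together with the hypothesis $0\le\vrho_\epso\le 1-C\veps^{\frac1{\gamma-1}}$ and the momentum bound in \eqref{eqn:wk-ini-hyp}, which confines the initial velocity to $[-1,1]$, this makes the initial oscillation of $w_\pm$, and therefore $\bar W$, bounded \emph{independently of $\veps$}. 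Consequently $k_\veps(\vrho^\nu)\le\bar W$ everywhere, and inverting this inequality through the above asymptotics yields $1-\vrho^\nu\ge c\,\veps^{\frac1{\gamma-1}}$ with $c>0$ independent of $\nu$ and $\veps$ --- precisely \eqref{eqn:wk-den-bound} at the approximate level; the bound $\vrho^\nu\ge 0$ follows at once from the maximum principle for the (renormalized) mass equation.

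It remains to send $\nu\to 0$. Because the density now lives in a compact subinterval $[0,1-c\veps^{\frac1{\gamma-1}}]$ on which $p_\veps$ is a smooth, genuinely nonlinear pressure, the system is, away from vacuum, a standard strictly hyperbolic $2\times2$ system, and the classical program for isentropic gas dynamics applies: the viscous analogue of the energy identity \eqref{eqn:total-energy-idt} furnishes a $\nu$-uniform dissipation estimate, which together with the family of weak entropy pairs gives $H^{-1}_{\mathrm{loc}}$-compactness of the entropy-dissipation measures $\Dt\eta(\vrho^\nu,u^\nu)+\Dx q(\vrho^\nu,u^\nu)$; the div--curl lemma then produces a Young measure $\mu_{t,x}$ satisfying the Tartar commutation relations, which one reduces to a Dirac mass by the now-classical arguments (DiPerna, Chen, Lions--Perthame--Souganidis), the vacuum set $\{\vrho=0\}$ being handled as usual. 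This gives $\vrho^\nu\to\vrho_\veps$ and $u^\nu\to u_\veps$ a.e.; passing to the limit in the weak formulation and in the entropy inequality produces an entropy weak solution in the sense of Definition~\ref{def:ent-wk-sol}, and the bounds pass to the limit, giving \eqref{eqn:wk-den-bound}.

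I expect the genuine obstacle to be, as for classical isentropic Euler, the vacuum set, where the system degenerates to weak hyperbolicity and the Riemann invariants lose their control; the congestion singularity at $\vrho=1$ is, by contrast, the benign end, since the invariant-region estimate keeps $\vrho^\nu$ uniformly away from it for each fixed $\veps$, and the $\veps$-uniformity of that buffer is exactly what the exponent $\tfrac1{\gamma-1}$ encodes. An alternative that parallels Bianchini--Perrin \cite{BP21} is to run the same scheme in Lagrangian mass coordinates, where \eqref{eqn:eul-sys} becomes the $p$-system $\Dt\tau-\Dx u=0$, $\Dt u+\Dx\!\bigl(\veps(\tau-1)^{-\gamma}\bigr)=0$ with specific volume $\tau=1/\vrho>1$: there the congestion constraint is merely the natural domain of the (convex) pressure and the invariant-region computation is a shade cleaner, at the cost of transferring the estimates back to Eulerian variables, where vacuum must again be treated with care.
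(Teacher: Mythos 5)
The paper does not prove this theorem: it is quoted verbatim from Bianchini--Perrin \cite{BP21}, where the existence result and the bound \eqref{eqn:wk-den-bound} are established for the Lagrangian ($p$-system) formulation and then transferred back. Your sketch is therefore not compared against an in-paper argument but against that reference, and in substance it reconstructs the same strategy (vanishing viscosity, invariant regions, compensated compactness), transplanted to Eulerian variables --- an option you yourself note at the end. Most importantly, you identify the correct mechanism for the $\veps$-uniform density bound, and your computation is right: $k_\veps(\vrho)\sim\frac{2\sqrt{\veps\gamma}}{\gamma-1}(1-\vrho)^{-(\gamma-1)/2}$ near $\vrho=1$, the hypothesis $\vrho_\epso\le 1-C\veps^{1/(\gamma-1)}$ makes $k_\veps(\vrho_\epso)=O(1)$ uniformly in $\veps$, and inverting $k_\veps(\vrho)\le\bar W$ returns exactly the exponent $\tfrac1{\gamma-1}$. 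This is precisely how the buffer from congestion is obtained in \cite{BP21}.

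Three places in your sketch carry real weight and should not be passed off as routine. First, the invariant-region step: Chueh--Conley--Smoller requires the region to be (quasi-)convex in the variables to which the identical viscosity is applied; you regularize $(\vrho,u)$ non-conservatively, whereas the standard treatments (DiPerna, Chen, Lions--Perthame--Souganidis) add $\nu\Dx^2$ to the \emph{conserved} variables $(\vrho,q)$ so that the $\nu\to0$ limit is a distributional solution of the conservation laws; with your form one must additionally control the commutator between $\nu\Dx^2 u$ and the conservative momentum flux, and re-verify invariance of the vacuum-cornered triangle $\{w_-\ge -\bar W,\ w_+\le\bar W,\ \vrho\ge0\}$ in the chosen variables. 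Second, the reduction of the Young measure to a Dirac mass is proved in the literature for $\gamma$-law (and, by Chen--LeFloch, for pressures that are analytic perturbations of $\gamma$-laws near vacuum); the congestion pressure does fit that framework on the invariant interval for each fixed $\veps$, but this is a theorem to be invoked precisely, not a corollary of DiPerna. Third, \eqref{eqn:wk-ini-hyp} as written is a one-sided bound on $q_\epso$; your argument needs $\abs{q_\epso}\le(1-C\veps^{1/(\gamma-1)})\vrho_\epso$ to confine $w_\pm$ from both sides (this is almost certainly the intended hypothesis). None of these is a conceptual gap, but each is where the actual proof lives; the Lagrangian route of \cite{BP21} avoids the first issue (the $p$-system's Riemann invariants are globally defined away from vacuum and the invariant-region computation is cleaner), at the cost of the Eulerian--Lagrangian transfer you mention.
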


\begin{remark}
    In accordance with Theorem \ref{thm:glob-weak-sol}, the existence of weak solutions to system \eqref{eqn:eul-sys}-\eqref{eqn:cong-pres} is only known globally, i.e.\ on $(0,\infty)\times\R$. As we are interested in designing a finite volume scheme, however, we work on $\odom$, where $T>0$ and $\Omega$ is an open, bounded subset of $\R$. Hence, from here on, we assume the existence of weak solutions to \eqref{eqn:eul-sys}-\eqref{eqn:cong-pres} in $\odom$.
\end{remark}

\subsection{Velocity stabilization}
\label{subsec:vel-stab}

We consider the following modified system.
\begin{subequations}
\label{eqn:vel-stab-eul-sys}
    \begin{align}
        &\Dt\vrho_\veps + \Dx(\vrho_\veps(u_\veps - \du_\veps)) = 0, \\ 
        &\Dt(\vrho_\veps u_\veps) + \Dx(\vrho_\veps u_\veps (u_\veps - \du_\veps)) + \Dx p_\veps(\vrho_\veps) = 0,
    \end{align}
\end{subequations}
where $\du$ is a shift in the velocity, which denotes the stabilization term.

Analogous to Proposition \ref{prop:apri-eng-est}, we can establish the following a priori estimates for the modified system \eqref{eqn:vel-stab-eul-sys}. 

\begin{proposition}[A priori estimates of the modified system]
\label{prop:apri-vel-stab}
 Let $\tu_\veps = u_\veps - \du_\veps$. The smooth solutions of \eqref{eqn:vel-stab-eul-sys} satisfy:
    \begin{enumerate}
        \item a renormalization identity: 
        \begin{equation}
        \label{eqn:renorm-idt-mod}
		  \Dt(\psig(\vrho_\veps))+\Dx(\psig(\vrho_\veps)\tu_\veps)+p_ \veps(\vrho_\veps) \Dx \tu_\veps = 0; 
		\end{equation}
		
        \item a kinetic energy identity:  
        \begin{equation}
        \label{eqn:ke-idt-mod}
            \Dt\biggl(\half\vrho_\veps (u_\veps)^2\biggr)+\Dx\biggl(\half\vrho_\veps(u_\veps)^2 \tu_\veps\biggr)+\Dx(p_\veps(\vrho_\veps))\tu_\veps = -\du_\veps\Dx p_\veps(\vrho_\veps); 
		\end{equation}
		
        \item a total energy identity:
		\begin{equation}
        \label{eqn:total-energy-idt-mod}
		  \Dt E_\veps+\Dx((E_\veps+p_\veps(\vrho_\veps))\tu_\veps) = -\du_\veps \Dx p_\veps(\vrho_\veps).
		\end{equation}
	\end{enumerate}
\end{proposition}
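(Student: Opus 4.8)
The plan is to mirror the derivation of Proposition~\ref{prop:apri-eng-est}, treating $\tu_\veps = u_\veps - \du_\veps$ as the effective transport velocity and tracking carefully the single place where the splitting $u_\veps = \tu_\veps + \du_\veps$ must be invoked. Throughout I assume $(\vrho_\veps,u_\veps)$ is a smooth solution with $0\leq\vrho_\veps<1$, so that $\psig$ and its derivatives are evaluated where $\psig$ is smooth and convex.

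For the renormalization identity \eqref{eqn:renorm-idt-mod}, I would apply the chain rule to the mass balance $\Dt\vrho_\veps + \Dx(\vrho_\veps\tu_\veps) = 0$ to get $\Dt(\psig(\vrho_\veps)) = -\psig'(\vrho_\veps)\Dx(\vrho_\veps\tu_\veps)$, expand $\Dx(\psig(\vrho_\veps)\tu_\veps) = \psig'(\vrho_\veps)\tu_\veps\Dx\vrho_\veps + \psig(\vrho_\veps)\Dx\tu_\veps$, add the two, and collect the surviving terms into $-(\vrho_\veps\psig'(\vrho_\veps) - \psig(\vrho_\veps))\Dx\tu_\veps$, which equals $-p_\veps(\vrho_\veps)\Dx\tu_\veps$ by the defining relation $z\psig'(z) - \psig(z) = p_\veps(z)$. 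This is exactly the $\psig$-convexity computation behind Proposition~\ref{prop:apri-eng-est}(1) with $u_\veps$ replaced by $\tu_\veps$, so nothing new happens and no source term appears.

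For the kinetic energy identity \eqref{eqn:ke-idt-mod}, I would multiply the momentum balance of \eqref{eqn:vel-stab-eul-sys} by $u_\veps$ and use the algebraic identities $u_\veps\Dt(\vrho_\veps u_\veps) = \Dt(\half\vrho_\veps u_\veps^2) + \half u_\veps^2\Dt\vrho_\veps$ and $u_\veps\Dx(\vrho_\veps u_\veps\tu_\veps) = \Dx(\half\vrho_\veps u_\veps^2\tu_\veps) + \half u_\veps^2\Dx(\vrho_\veps\tu_\veps)$. The two $\half u_\veps^2(\cdot)$ contributions combine into $\half u_\veps^2[\Dt\vrho_\veps + \Dx(\vrho_\veps\tu_\veps)]$, which vanishes by the mass balance of the \emph{modified} system, leaving $\Dt(\half\vrho_\veps u_\veps^2) + \Dx(\half\vrho_\veps u_\veps^2\tu_\veps) + u_\veps\Dx p_\veps(\vrho_\veps) = 0$. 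Here, and only here, the stabilization enters: I split $u_\veps\Dx p_\veps(\vrho_\veps) = \tu_\veps\Dx p_\veps(\vrho_\veps) + \du_\veps\Dx p_\veps(\vrho_\veps)$ and move the last summand to the right, producing the source $-\du_\veps\Dx p_\veps(\vrho_\veps)$.

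Finally, for the total energy identity \eqref{eqn:total-energy-idt-mod} I would add \eqref{eqn:renorm-idt-mod} and \eqref{eqn:ke-idt-mod}, using $p_\veps(\vrho_\veps)\Dx\tu_\veps + \tu_\veps\Dx p_\veps(\vrho_\veps) = \Dx(p_\veps(\vrho_\veps)\tu_\veps)$ to merge the flux terms into $\Dx((E_\veps + p_\veps(\vrho_\veps))\tu_\veps)$ with $E_\veps = \half\vrho_\veps u_\veps^2 + \psig(\vrho_\veps)$; the right-hand sides add up to $-\du_\veps\Dx p_\veps(\vrho_\veps)$. There is no genuine obstacle — the computation is routine — but the point that requires care is the bookkeeping: the shift $\du_\veps$ must stay inside the convective fluxes (so that the mass-balance cancellation in the kinetic energy step still goes through verbatim), while the decomposition $u_\veps = \tu_\veps + \du_\veps$ is used only against the pressure gradient. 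This is precisely what isolates the lone stabilization source term $-\du_\veps\Dx p_\veps(\vrho_\veps)$, whose discrete analogue will later be chosen to absorb the stiff pressure contribution and deliver energy stability.
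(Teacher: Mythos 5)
Your computation is correct and follows exactly the route the paper intends: the paper gives no explicit proof of Proposition~\ref{prop:apri-vel-stab}, merely asserting it is analogous to Proposition~\ref{prop:apri-eng-est}, and your derivation — chain rule on the modified mass balance with $z\psig'(z)-\psig(z)=p_\veps(z)$ for the renormalization identity, multiplication of the momentum balance by $u_\veps$ with the mass-balance cancellation for the kinetic energy identity, and the splitting $u_\veps=\tu_\veps+\du_\veps$ against the pressure gradient to isolate the source $-\du_\veps\Dx p_\veps(\vrho_\veps)$ — is precisely that analogue. No gaps.
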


If we set $\du_\veps = \eta \Dx p_\veps(\vrho_\veps)$, where $\eta>0$ is a constant, in \eqref{eqn:total-energy-idt-mod}, one observes that we obtain
\[
\Dt E_\veps + \Dx((E_\veps + p_\veps(\vrho_\veps))\tu_\veps) = -\eta (\Dx p_\veps(\vrho_\veps))^2 \leq 0.
\]
Thus, we see that the introduction of an appropriately chosen shifted velocity indeed yields the energy inequality \eqref{eqn:ent-ineq}.

\section{Finite Volume Scheme}
\label{sec:fin-vol-scm}

\subsection{One-dimensional Finite Volume Scheme}
\label{subsec:1d-scheme}

We want to numerically approximate the velocity stabilized Euler system \eqref{eqn:vel-stab-eul-sys} on $\odom$. As $\Omega\subset\R$ is open and bounded, we suppose that $\Omega = (a,b)$ for convenience. We consider a discretization $0=t^{0}<t^{1}<\dots<t^n<\cdots<t_{N}=T$ of the time interval $\lbrack 0,T\rbrack$ and let $\delt=t^{n+1}-t^{n}$ for $n=0,1,\dots,N-1$ be the constant time-step. For the space discretization, we consider a one-dimensional MAC discretization $(\M, \E$), where the primal grid $\M$ contains the control volumes $K_{i}=[x_{i-\half},x_{i+\half}]$ such that $\overline{\Omega} = \cup_{i=1}^M K_i$. Here, $x_{\half} = a$ and $x_{M+\half} = b$. Here $\E$ is the collection of all endpoints of the control volumes. For $i=1,\dots, M-1$ and to each endpoint $x_{i+\half}\in\E$, we associate a dual cell $D_{i+\half} = \lbrack x_i, x_{i+1}\rbrack$, where $x_i = \half(x_{i-\half} + x_{i+\half})$. We also set $D_{\half} = \lbrack x_{\half}, x_1\rbrack$ and $D_{M+\half} = \lbrack x_M, x_{M+\half}\rbrack$. To simplify the presentation, we suppose that the space-step $\delx = x_{i+\half} - x_{i-\half} = x_{i+1} - x_{i}$ is constant. The discrete density along with discrete pressure are approximated over the cell centres $x_i$ of the primal cells $K_i$. The velocities are approximated at the centres of the dual cells $D_{i+\half}$ associated to the endpoints $x_{i+\half}$. The unknowns are thus the density $\vrho_i$ for $i = 1,\dots ,M$, and the velocities $u_{i+\half}$ for $i = 0,\dots, M$. We refer to Figure \ref{fig:1D-MAC} for a representation of the MAC grid as well as the arrangement of the variables.

\begin{figure}[htpb!]
	\centering
	\begin{tikzpicture}
		\draw [thick](-1.0,0) -- (9,0);
		\draw[black] (5.5,0.2) node[anchor=west]{$\cdots$};
		\draw[black] (5.5,-0.2) node[anchor=west]{$\cdots$};
		\draw[black] (1.5,0.2) node[anchor=west]{$\cdots$};
		\draw[black] (1.5,-0.2) node[anchor=west]{$\cdots$};
		\draw[black] (-1.0,0) circle (0pt) node[anchor=south]{$x_{\half}$};
		\draw[black] (9,0) circle (0pt) node[anchor=south]{$x_{M+\half}$};
		\draw[black] (7,0) circle (0pt) node[anchor=south]{$x_{M-\half}$};
		\filldraw[black] (0,0) circle (2pt) node[anchor=south]{$x_{1}$};
		\draw[black] (1.0,0) circle (0pt) node[anchor=south]{$x_{\frac{3}{2}}$};
		\filldraw[black] (4,0) circle (2pt) node[anchor=south]{$x_{i}$};
		\filldraw[black] (3,0) circle (0pt) node[anchor=south]{$x_{i-\half}$};
		\filldraw[black] (5,0) circle (0pt) node[anchor=south]{$x_{i+\half}$};
		\filldraw[black] (8,0) circle (2pt) node[anchor=south]{$x_{M}$};
		\draw [black, thick] (3, 0.08)--(3,-0.08);
		\draw [black, thick] (5, 0.08)--(5,-0.08);
		\draw [black, thick] (1.0, 0.08)--(1.0,-0.08);
		\draw [black, thick] (-1.0, 0.08)--(-1.0,-0.08);
		\draw [black, thick] (7, 0.08)--(7,-0.08);
		\draw [black, thick] (9, 0.08)--(9,-0.08);
		\draw[black] (0,-0.1)  node[anchor=north]{$\varrho_{1}$};
		\draw[black] (4,-0.1)  node[anchor=north]{$\varrho_{i}$};
		\draw[black] (8,-0.1)  node[anchor=north]{$\varrho_{M}$};
		\draw[black] (3,-0.1)  node[anchor=north]{$u_{i-\half}$};
		\draw[black] (5,-0.1)  node[anchor=north]{$u_{i+\half}$};
            \draw[black] (-1.0, -0.1) node[anchor=north]{$u_{\half}$};
            \draw[black] (1.0, -0.1) node[anchor=north]{$u_{\frac{3}{2}}$};
            \draw[black] (7, -0.1) node[anchor=north]{$u_{M-\half}$};
            \draw[black] (9, -0.1) node[anchor=north]{$u_{M+\half}$};
	\end{tikzpicture}
	\caption{Arrangement of the variables in one-dimensional grid}
        \label{fig:1D-MAC}
\end{figure}
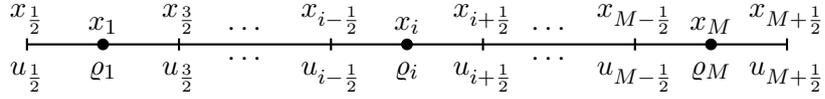

We initialize the scheme by setting 
\begin{equation}
\label{eqn:schm-init}
    \vrho^0_i = \frac{1}{\delx}\int_{K_i}\vrho_\epso \,\dx,\quad u^0_{i+\half} = \frac{1}{\delx}\int_{D_{i+\half}} u_\epso \,\dx.
\end{equation}
As a consequence of \eqref{eqn:ini-data-hyp}, note that $0\leq\vrho^0_i<1$. For the sake of simplicity, we omit the dependence of the discrete variables on $\veps$ throughout. Additionally, we also impose the boundary conditions to be periodic throughout.

The fully discrete, semi-implicit scheme for $0\leq n \leq N-1$ reads:
\begin{subequations}
\label{eqn:vel-stab-schm}
    \begin{align}
        &\frac{1}{\delt}(\varrho_{i}^{n+1}-\varrho_{i}^{n})+\frac{1}{\delx}\bigl(F_{i+\half}^{n+1}-F_{i-\half}^{n+1}\bigr)=0, \label{eqn:disc-mss-bal}   \\           
        &\frac{1}{\delt}(\varrho_{i+\half}^{n+1} u_{i+\half}^{n+1}-\varrho_{i+\half}^{n} u_{i+\half}^{n})+\frac{1}{\delx}(F_{i+1}^{n+1}u_{i+1}^{n}-F_{i}^{n+1}u_{i}^{n}) + 
        \eth_\E p^{n+1}_{i+\half} = 0. \label{eqn:disc-mom-bal}  
   \end{align}
\end{subequations}

The mass flux across the endpoints of the primal cells is defined as
\begin{equation}
\label{eqn:disc-mss-flx}
    F_{i+\half}^{n+1}= \varrho_{i}^{n+1}\tu_{i+\half}^{n,+}+\varrho_{i+1}^{n+1}\tu_{i+\half}^{n,-},
\end{equation}
where $\tu^n_{i+\half} = u^n_{i+\half} - \du^{n+1}_{i+\half}$ denotes the stabilized velocity. In accordance with the stability analysis done before, cf.\ \eqref{eqn:total-energy-idt-mod}, we set
\begin{equation}
\label{eqn:vel-stab-term}
    \du^{n+1}_{i+\half} = \eta\delt\eth_\E p^{n+1}_{i+\half}, 
\end{equation}
where $\eta > 0$ is to be determined later. Further, the positive and negative parts of the stabilized velocity read
\begin{align}
\label{eqn:pos-neg-vel}
    &\tu_{i+\half}^{n,+}=\max\{0,u_{i+\half}^{n}\}-\min\{0,\delta u_{i+\half}^{n+1}\} \geq 0,\\
    &\tu_{i+\half}^{n,-}=\min\{0, u_{i+\half}^{n} \}-\max\{0,\delta u_{i+\half}^{n+1} \} \leq 0.
\end{align}

The momentum convection flux is constructed using the mass flux and we define $F^{n+1}_i = \half(F^{n+1}_{i+\half} + F^{n+1}_{i-\half})$. The associated upwind velocity reads
\begin{equation}
	u_{i}^{n}= \begin{dcases}
		u_{i-\half}^{n}& ,\text{if } F^{n+1}_i\geq 0, \\
		u_{i+\half}^{n}& ,\text{otherwise}.
	\end{dcases}
\end{equation}
The discrete density at the dual cell centers, $\varrho_{i+\half}^n$, is defined as the average across its neighboring primal cells, i.e.\ $\varrho_{i+\half}^{n}=\half(\varrho_{i}^{n} + \varrho_{i+1}^{n})$. One can observe that the dual density $\vrho^{n}_{i+\half}$ will satisfy the following discrete dual mass balance, which can be obtained by considering the mass balance \eqref{eqn:disc-mss-bal} for indices $i$ and $i+1$, and then averaging the two.
\begin{equation}
\label{eqn:dual-mss-bal}
    \frac{1}{\delt}(\vrho^{n+1}_{i+\half} - \vrho^n_{i+\half}) + \frac{1}{\delx}(F^{n+1}_{i+1} - F^{n+1}_i) = 0.
\end{equation}

Finally, the operator $\eth_\E$ denotes a discretization of the space derivative on the dual cells for functions that are defined on the primal cells. Namely, we define 
\begin{equation}
\label{eqn:dual-der}
    \eth_\E p^{n+1}_{i+\half} =
    \begin{dcases}
        0,& \text{if } i =0, M,\\
        \frac{1}{\delx} (p^{n+1}_{i+1} - p^{n+1}_{i}),& \text{otherwise}.
    \end{dcases}
\end{equation}

In addition, for functions that are defined on the dual cells, we define a discretization of the space derivative on the primal cells denoted by $\eth_\M$ and we define it as 
\begin{equation}
\label{eqn:prim-der}
    \eth_\M w^n_i = \frac{1}{\delx} (w^n_{i+\half} - w^n_{i-\half}).
\end{equation}

One can observe that the following gradient-divergence duality relation holds:
\begin{equation}
\label{eqn:disc-grad-div}
    \sum_{i=1}^M \delx p^{n+1}_i \eth_\M w^n_i + \sum_{i=0}^M \delx w^n_{i+\half}\eth_\E p^{n+1}_{i+\half} = 0.
\end{equation}

Utilizing the dual mass update \eqref{eqn:dual-mss-bal} in the discrete momentum balance \eqref{eqn:disc-mom-bal}, one obtains the following discrete velocity update:
\begin{equation}
\label{eqn:disc-vel-update}
	\frac{\varrho_{i+\half}^{n+1}}{\delt}(u_{i+\half}^{n+1}-u_{i+\half}^{n}) + \frac{1}{\delx}\lbrack F_{i+1}^{n+1,-}(u_{i+\frac{3}{2}}^{n} -u_{i+\half}^{n}) -F_{i}^{n+1,+}(u_{i-\half}^{n} - u_{i+\half}^{n})\rbrack + \eth_\E p^{n+1}_{i+\half} = 0.
\end{equation}
Here, $F^{n+1,+}_i, F^{n+1,-}_{i+1}$ denote the standard positive and negative parts of a real number, i.e. $r^{\pm} = \half(r\pm\abs{r})$ so that $r^+\geq 0$ and $r^{-}\leq 0$.

\subsubsection{Energy stability of the scheme}

We now aim to prove the discrete variants of the identities \eqref{eqn:renorm-idt-mod}-\eqref{eqn:total-energy-idt-mod}. To that end, let $\llbracket q,r\rrbracket$ denote the interval $\lbrack\min\lbrace q,r\rbrace,\max\lbrace q,r\rbrace \rbrack$ for any $q,r\in\R$. As done in the continuous setting, we suppose that the total energy of the system at the discrete level remains bounded initially, i.e.\ there exists a constant $C>0$ independent of $\veps$ such that 
\begin{equation}
\label{eqn:disc-ini-enrg-hyp}
    \sum_{i=1}^M\delx\psig(\vrho^0_i) + \sum_{i=0}^M\delx\half\vrho^{0}_{i+\half}(u^0_{i+\half})^2\leq C.
\end{equation}

\begin{proposition}[Discrete energy identities]
    The solutions $(\vrho^n, u^n)_{0\leq n\leq N-1}$ generated by the scheme \eqref{eqn:vel-stab-schm} will satisfy
    \begin{enumerate}
        \item a discrete renormalization identity:
        \begin{equation}
        \label{eqn:disc-renorm}
            \frac{1}{\delt}(\psig(\vrho^{n+1}_i) - \psig(\vrho^n_i)) + \frac{1}{\delx}(Q^{n+1}_{i+\half} - Q^{n+1}_{i-\half}) + p^{n+1}_i \eth_\M\tu^n_i + R^{n+1}_i= 0,
        \end{equation}
        where $Q^{n+1}_{i+\half} = \psig(\vrho^{n+1}_i)\tu^{n,+}_{i+\half} + \psig(\vrho^{n+1}_{i+1})\tu^{n,-}_{i+\half}$. The remainder term $R^{n+1}_i$ is given as
        \begin{equation}
        \label{eqn:disc-renorm-rem}
            \begin{split}
                R^{n+1}_i &= \frac{1}{2\delt}(\vrho^{n+1}_i - \vrho^n_i)^2 \psig^{\prime\prime}(\overline{\vrho}^{n+\half}_i) + \frac{1}{2\delx}(\vrho^{n+1}_{i+1}-\vrho^{n+1}_i)^2 \psig^{\prime\prime}(\Tilde{\vrho}^{n+1}_{i+\half})(-\tu^{n,-}_{i+\half}) \\
                &+\frac{1}{2\delx}(\vrho^{n+1}_{i}-\vrho^{n+1}_{i-1})^2 \psig^{\prime\prime}(\Tilde{\vrho}^{n+1}_{i-\half})\tu^{n,+}_{i-\half},
            \end{split}
        \end{equation}
        where $\overline{\vrho}^{n+\half}_i\in\llbracket \vrho^n_i,\vrho^{n+1}_i\rrbracket$, $\Tilde{\vrho}^{n+1}_{i+\half}\in\llbracket\vrho^{n+1}_i,\vrho^{n+1}_{i+1}\rrbracket$ and $\Tilde{\vrho}^{n+1}_{i-\half}\in\llbracket\vrho^{n+1}_{i-1}, \vrho^{n+1}_i\rrbracket$;

        \item a kinetic energy identity:
        \begin{equation}
        \label{eqn:disc-ke}
            \begin{split}
                \frac{1}{\delt}\biggl(\half\vrho^{n+1}_{i+\half}(u^{n+1}_{i+\half})^2 - \half\vrho^n_{i+\half}(u^n_{i+\half})^2\biggr) &+ \frac{1}{\delx}\biggl(F^{n+1}_{i+1}\frac{(u^n_{i+1})^2}{2} - F^{n+1}_i\frac{(u^n_i)^2}{2}\biggr) \\
                + \tu^n_{i+\half}\eth_\E p^{n+1}_{i+\half} &+ S^{n+1}_{i+\half} = -\du^{n+1}_{i+\half}\eth_\E p^{n+1}_{i+\half},
            \end{split}
        \end{equation}
        where the remainder term $S^{n+1}_{i+\half}$ is given by 
        \begin{equation}
        \label{eqn:disc-ke-rem}
            \begin{split}
                S^{n+1}_{i+\half} = -\frac{1}{2\delt}\vrho^{n+1}_{i+\half}(u^{n+1}_{i+\half} - u^n_{i+\half})^2 - \frac{1}{2\delx} F^{n+1, -}_{i+1} (u^n_{i+\frac{3}{2}} - u^n_{i+\half})^2 + \frac{1}{2\delx} F^{n+1, +}_i (u^n_{i-\half} - u^n_{i+\half})^2.
            \end{split}
        \end{equation}
    \end{enumerate}
\end{proposition}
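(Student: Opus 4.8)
\emph{Proof proposal.} Both identities are obtained in the same spirit as their continuous counterparts in Proposition~\ref{prop:apri-vel-stab}: one tests a discrete balance law against a suitable multiplier and reorganizes the outcome by means of two elementary tools, namely the polarization identities $a(a-b)=\tfrac12(a^2-b^2)+\tfrac12(a-b)^2$ and $b(a-b)=\tfrac12(a^2-b^2)-\tfrac12(a-b)^2$, and Taylor's formula with Lagrange remainder applied to the convex potential $\psig$. The algebraic role of the continuous chain rules is played here by the convexity $\psig''>0$ together with the defining relation $z\psig'(z)-\psig(z)=p_\veps(z)$; all Taylor expansions are legitimate since the discrete densities stay in $[0,1)$, on which $\psig$ is $C^{1}$ and twice differentiable on $(0,1)$.

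For the discrete renormalization identity I would multiply the mass balance \eqref{eqn:disc-mss-bal} by $\psig'(\vrho^{n+1}_i)$. For the time term, a second order Taylor expansion of $\psig$ about $\vrho^{n+1}_i$ evaluated at $\vrho^n_i$ yields
\[
\psig'(\vrho^{n+1}_i)(\vrho^{n+1}_i-\vrho^n_i)=\psig(\vrho^{n+1}_i)-\psig(\vrho^n_i)+\tfrac12\psig''(\overline{\vrho}^{\,n+\half}_i)(\vrho^{n+1}_i-\vrho^n_i)^2,\qquad \overline{\vrho}^{\,n+\half}_i\in\llbracket\vrho^n_i,\vrho^{n+1}_i\rrbracket,
\]
which after division by $\delt$ is the first term of $R^{n+1}_i$. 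For the flux difference I would substitute the upwind form \eqref{eqn:disc-mss-flx} and, at each of the two interfaces bordering $K_i$, use $z\psig'(z)-\psig(z)=p_\veps(z)$ on the "local" node and a Taylor expansion of $\psig$ at the "neighbouring" node; this produces the numerical flux $Q^{n+1}_{i+\half}$, a pressure contribution $p^{n+1}_i(\tu^{n,+}_{i+\half}+\tu^{n,-}_{i+\half})$ at each interface, and two quadratic remainders weighted respectively by $-\tu^{n,-}_{i+\half}\geq0$ and $\tu^{n,+}_{i-\half}\geq0$. Since $\tu^{n,+}_{i+\half}+\tu^{n,-}_{i+\half}=\tu^n_{i+\half}$ by \eqref{eqn:pos-neg-vel}, the pressure contributions assemble into $p^{n+1}_i\,\eth_\M\tu^n_i$ via \eqref{eqn:prim-der}, and collecting all terms gives precisely \eqref{eqn:disc-renorm}--\eqref{eqn:disc-renorm-rem}.

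For the discrete kinetic energy identity I would start from the velocity update \eqref{eqn:disc-vel-update} and multiply it by the \emph{old} velocity $u^n_{i+\half}$; this choice is essential, because it turns the pressure term into $u^n_{i+\half}\,\eth_\E p^{n+1}_{i+\half}=\tu^n_{i+\half}\,\eth_\E p^{n+1}_{i+\half}+\du^{n+1}_{i+\half}\,\eth_\E p^{n+1}_{i+\half}$ (using $\tu^n_{i+\half}=u^n_{i+\half}-\du^{n+1}_{i+\half}$), which is exactly the discrete pressure work plus the stabilization term on the right of \eqref{eqn:disc-ke}. Applying $b(a-b)=\tfrac12(a^2-b^2)-\tfrac12(a-b)^2$ to the time term with $a=u^{n+1}_{i+\half}$, $b=u^n_{i+\half}$ produces the factor $\vrho^{n+1}_{i+\half}$ multiplying $\tfrac12((u^{n+1}_{i+\half})^2-(u^n_{i+\half})^2)$; I would then rewrite $\tfrac12(u^n_{i+\half})^2(\vrho^{n+1}_{i+\half}-\vrho^n_{i+\half})/\delt$ as a flux difference through the dual mass balance \eqref{eqn:dual-mss-bal}, which recovers the genuine discrete time derivative of $\tfrac12\vrho_{i+\half}(u_{i+\half})^2$, the first term of $S^{n+1}_{i+\half}$, and an auxiliary term $\tfrac{1}{2\delx}(u^n_{i+\half})^2(F^{n+1}_{i+1}-F^{n+1}_i)$. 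Finally I would merge this auxiliary term with the upwind convective term: writing $F^{n+1}_{i+1}(u^n_{i+1})^2=F^{n+1,+}_{i+1}(u^n_{i+\half})^2+F^{n+1,-}_{i+1}(u^n_{i+\frac32})^2$ and likewise for $F^{n+1}_i(u^n_i)^2$ (the upwind choice of $u^n_i$), and applying $c\,b(a-b)=c[\tfrac12(a^2-b^2)-\tfrac12(a-b)^2]$ on each interface, the convective contribution becomes $\tfrac{1}{\delx}(F^{n+1}_{i+1}(u^n_{i+1})^2/2-F^{n+1}_i(u^n_i)^2/2)$ plus the two remaining square terms of $S^{n+1}_{i+\half}$, weighted by $-F^{n+1,-}_{i+1}\geq0$ and $F^{n+1,+}_i\geq0$; this is \eqref{eqn:disc-ke}--\eqref{eqn:disc-ke-rem}.

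\emph{Main obstacle.} The Taylor steps are routine; the delicate point is the convective bookkeeping in the kinetic energy identity. One must track carefully which of $F^{n+1,\pm}_{i+1},\,F^{n+1,\pm}_i$ is active in each case, use $F^{n+1}_i=\tfrac12(F^{n+1}_{i+\half}+F^{n+1}_{i-\half})$ and the upwind definition of $u^n_i$ consistently, and check that the auxiliary term generated by the dual mass balance is exactly what completes the squares into $-F^{n+1,-}_{i+1}(u^n_{i+\frac32}-u^n_{i+\half})^2/(2\delx)+F^{n+1,+}_i(u^n_{i-\half}-u^n_{i+\half})^2/(2\delx)$ with the correct signs. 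Although the statement of the proposition does not assert the sign of $R^{n+1}_i$ or $S^{n+1}_{i+\half}$, these signs (which follow from $\psig''>0$, $-\tu^{n,-}\geq0$, $\tu^{n,+}\geq0$, $-F^{n+1,-}\geq0$, $F^{n+1,+}\geq0$) are what the subsequent energy estimate will exploit, so getting them right is the real content of the computation.
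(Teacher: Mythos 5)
Your proposal is correct and follows exactly the route the paper takes (and attributes to the appendix of \cite{AA24}): multiply the mass balance by $\psig'(\vrho^{n+1}_i)$ with Taylor expansions in time and space for the renormalization identity, and multiply the velocity update \eqref{eqn:disc-vel-update} by $u^n_{i+\half}$, use $(q-r)r=\half(q^2-r^2-(q-r)^2)$ and the dual mass balance for the kinetic energy identity. The detailed bookkeeping you describe, including the splitting $F^{n+1}_{i+1}(u^n_{i+1})^2=F^{n+1,+}_{i+1}(u^n_{i+\half})^2+F^{n+1,-}_{i+1}(u^n_{i+\frac{3}{2}})^2$ and the decomposition $u^n_{i+\half}\eth_\E p^{n+1}_{i+\half}=\tu^n_{i+\half}\eth_\E p^{n+1}_{i+\half}+\du^{n+1}_{i+\half}\eth_\E p^{n+1}_{i+\half}$, checks out and reproduces \eqref{eqn:disc-renorm-rem} and \eqref{eqn:disc-ke-rem} with the correct signs.
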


\begin{proof}
    The renormalization identity \eqref{eqn:disc-renorm} can be obtained by multiplying the discrete mass balance \eqref{eqn:disc-mss-bal} with $\psig^\prime(\vrho^{n+1}_i)$ and employing the Taylor expansions in time and space. The kinetic energy identity \eqref{eqn:disc-ke} is obtained by multiplying the velocity balance \eqref{eqn:disc-vel-update} with $u^n_{i+\half}$, using the identity $(q-r)r = \half (q^2 - r^2 - (q-r)^2)$ and using the dual mass balance \eqref{eqn:dual-mss-bal} to simplify. For further details, see the appendix of \cite{AA24}.
\end{proof}

Utilizing the above established identities, we can now show the energy stability of the scheme.

\begin{theorem}[Energy stability]
\label{thm:loc-ent-ineq}
    Assume that the following conditions hold for each $0\leq i\leq M$:
    \begin{enumerate}
        \item $\displaystyle\eta-\frac{3}{2\vrho^{n+1}_{i+\half}}>0$;
        \item $\displaystyle\frac{\delt}{\delx}\leq\frac{\vrho^{n+1}_{i+\half}}{3}\min\biggl\lbrace\frac{-1}{F^{n+1, -}_{i+1}},\frac{1}{F^{n+1, +}_i}\biggr\rbrace$.
    \end{enumerate}
    Then, for each $0\leq n\leq N-1$, the following local in-time energy inequality holds:
    \begin{equation}
    \label{eqn:disc-loc-ent-ineq}
        \sum_{i=1}^M\delx\psig(\vrho^{n+1}_i) + \sum_{i=0}^{M}\delx\half\vrho^{n+1}_{i+\half}(u^{n+1}_{i+\half})^2 \leq \sum_{i=1}^M\delx\psig(\vrho^n_i) + \sum_{i=0}^M\delx\half\vrho^n_{i+\half}(u^n_{i+\half})^2.
    \end{equation}
\end{theorem}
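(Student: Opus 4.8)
The plan is to sum the two discrete energy identities over the spatial indices and show that all the extra terms either cancel in pairs (by the gradient–divergence duality) or have a sign. First I would sum the discrete renormalization identity \eqref{eqn:disc-renorm} over $i=1,\dots,M$. Under the periodic boundary conditions the flux differences $\sum_i (Q^{n+1}_{i+\half}-Q^{n+1}_{i-\half})$ telescope to zero, so what remains is the time-difference of $\sum_i \delx\,\psig(\vrho^{n+1}_i)$, the term $\sum_i \delx\, p^{n+1}_i\,\eth_\M\tu^n_i$, and the sum of remainders $\sum_i\delx\,R^{n+1}_i$. Crucially, each of the three pieces of $R^{n+1}_i$ in \eqref{eqn:disc-renorm-rem} is nonnegative: $\psig^{\prime\prime}>0$ by convexity of $\psig$, $-\tu^{n,-}_{i+\half}\ge 0$ and $\tu^{n,+}_{i-\half}\ge 0$ by \eqref{eqn:pos-neg-vel}, and the density increments are squared; hence $\sum_i\delx\,R^{n+1}_i\ge 0$ and can be dropped to obtain an inequality in the right direction.

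Next I would sum the kinetic energy identity \eqref{eqn:disc-ke} over $i=0,\dots,M$. Again the convective flux differences $\sum_i(F^{n+1}_{i+1}\tfrac{(u^n_{i+1})^2}{2}-F^{n+1}_i\tfrac{(u^n_i)^2}{2})$ telescope away under periodicity, leaving the time-difference of $\sum_i\delx\,\tfrac12\vrho^{n+1}_{i+\half}(u^{n+1}_{i+\half})^2$, the term $\sum_i\delx\,\tu^n_{i+\half}\,\eth_\E p^{n+1}_{i+\half}$, the remainder sum $\sum_i\delx\,S^{n+1}_{i+\half}$, and the stabilization source $-\sum_i\delx\,\du^{n+1}_{i+\half}\eth_\E p^{n+1}_{i+\half}$. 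Now add the summed renormalization identity to the summed kinetic identity. Recalling $\tu^n_{i+\half}=u^n_{i+\half}-\du^{n+1}_{i+\half}$, the pressure-work terms combine as $\sum_i\delx\, p^{n+1}_i\eth_\M\tu^n_i+\sum_i\delx\,\tu^n_{i+\half}\eth_\E p^{n+1}_{i+\half}$, which vanishes by the duality relation \eqref{eqn:disc-grad-div} applied with $w^n=\tu^n$. So after these cancellations the only terms obstructing \eqref{eqn:disc-loc-ent-ineq} are $\sum_i\delx\,S^{n+1}_{i+\half}$ and the stabilization source $-\sum_i\delx\,\du^{n+1}_{i+\half}\eth_\E p^{n+1}_{i+\half}=-\eta\delt\sum_i\delx\,(\eth_\E p^{n+1}_{i+\half})^2$, using \eqref{eqn:vel-stab-term}.

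The main obstacle — and the place where hypotheses (1) and (2) enter — is to show that $\sum_i\delx\bigl(S^{n+1}_{i+\half}-\du^{n+1}_{i+\half}\eth_\E p^{n+1}_{i+\half}\bigr)\le 0$. The source $-\eta\delt\sum_i\delx(\eth_\E p^{n+1}_{i+\half})^2$ is already $\le 0$, but $S^{n+1}_{i+\half}$ in \eqref{eqn:disc-ke-rem} is \emph{not} signed: its first term $-\tfrac{1}{2\delt}\vrho^{n+1}_{i+\half}(u^{n+1}_{i+\half}-u^n_{i+\half})^2$ is $\le 0$, and the term $\tfrac{1}{2\delx}F^{n+1,+}_i(u^n_{i-\half}-u^n_{i+\half})^2$ is $\ge 0$ while $-\tfrac{1}{2\delx}F^{n+1,-}_{i+1}(u^n_{i+\frac32}-u^n_{i+\half})^2$ is also $\ge 0$. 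The strategy is the standard one for staggered schemes: from the velocity update \eqref{eqn:disc-vel-update} write $u^{n+1}_{i+\half}-u^n_{i+\half}$ explicitly in terms of the upwind velocity increments and $\eth_\E p^{n+1}_{i+\half}$, substitute into the negative term $-\tfrac{1}{2\delt}\vrho^{n+1}_{i+\half}(u^{n+1}_{i+\half}-u^n_{i+\half})^2$, and use a Young/convexity inequality (splitting the square of a sum of three terms, picking up the factor $3$ that appears in the hypotheses) to absorb the two positive convective remainders and the positive contribution of the pressure-gradient term. Condition (2), $\tfrac{\delt}{\delx}\le\tfrac{\vrho^{n+1}_{i+\half}}{3}\min\{-1/F^{n+1,-}_{i+1},\,1/F^{n+1,+}_i\}$, is exactly what makes the coefficient of each convective $(u^n_{\cdot}-u^n_{i+\half})^2$ nonpositive after this absorption, and condition (1), $\eta>\tfrac{3}{2\vrho^{n+1}_{i+\half}}$, is exactly what makes the leftover coefficient of $\delt(\eth_\E p^{n+1}_{i+\half})^2$ — namely $-\eta+\tfrac{3}{2\vrho^{n+1}_{i+\half}}$ (times $\delx$) — nonpositive, so the stabilization source dominates. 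Summing the resulting pointwise inequality over $i$ and combining with the cancellations above yields \eqref{eqn:disc-loc-ent-ineq}. I would also note at the outset that hypotheses (1)–(2) are vacuous/automatic when the relevant fluxes vanish (e.g. $F^{n+1,+}_i=0$), interpreting the min accordingly.
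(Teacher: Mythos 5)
Your proposal follows the paper's proof essentially step for step: sum the two discrete identities, telescope the convective fluxes using conservativity and periodicity, cancel the pressure-work terms via the duality relation \eqref{eqn:disc-grad-div} with $w^n=\tu^n$, drop $\sum_i\delx\,R^{n+1}_i\ge 0$, and control the kinetic remainder by expressing $u^{n+1}_{i+\half}-u^n_{i+\half}$ from the velocity update \eqref{eqn:disc-vel-update} and applying $(a+b+c)^2\le 3(a^2+b^2+c^2)$. The final coefficients you identify, namely $\frac{3}{\vrho^{n+1}_{i+\half}}\frac{\delt}{\delx}F^{n+1,\mp}\pm 1$ for the convective squares and $\frac{3}{2\vrho^{n+1}_{i+\half}}-\eta$ for $\delt(\eth_\E p^{n+1}_{i+\half})^2$, are exactly those appearing in the paper's final inequality, and conditions (1) and (2) enter precisely as you say.

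There is, however, one sign inconsistency you should repair. After the cancellations, the combined identity gives that the left-hand side of \eqref{eqn:disc-loc-ent-ineq} minus its right-hand side equals $-\delt\sum_{i=1}^M\delx\,R^{n+1}_i-\delt\sum_{i=0}^M\delx\,\bigl(S^{n+1}_{i+\half}+\du^{n+1}_{i+\half}\eth_\E p^{n+1}_{i+\half}\bigr)$, so the inequality to establish is $\sum_i\delx\,\bigl(S^{n+1}_{i+\half}+\du^{n+1}_{i+\half}\eth_\E p^{n+1}_{i+\half}\bigr)\ge 0$, not $\sum_i\delx\,\bigl(S^{n+1}_{i+\half}-\du^{n+1}_{i+\half}\eth_\E p^{n+1}_{i+\half}\bigr)\le 0$. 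Consequently the term that must be absorbed is the \emph{nonnegative} quantity $+\frac{1}{2\delt}\vrho^{n+1}_{i+\half}(u^{n+1}_{i+\half}-u^n_{i+\half})^2$ appearing in $-S^{n+1}_{i+\half}$, and it is absorbed by the \emph{nonpositive} convective pieces of $-S^{n+1}_{i+\half}$ together with $-\eta\delt(\eth_\E p^{n+1}_{i+\half})^2$. Your description runs the absorption the other way: as literally written, using $-\frac{1}{2\delt}\vrho^{n+1}_{i+\half}(u^{n+1}_{i+\half}-u^n_{i+\half})^2$ to dominate the positive convective remainders would require a \emph{lower} bound on $(u^{n+1}_{i+\half}-u^n_{i+\half})^2$ in terms of the individual squares, which the Young inequality does not provide (it gives only an upper bound, hence only a lower bound on that negative term). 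With the signs corrected, the identical computation goes through and reproduces the paper's argument verbatim.
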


\begin{proof}
    Multiply \eqref{eqn:disc-renorm} with $\delx$, sum over $i=1$ to $i=M$ and multiply \eqref{eqn:disc-ke} by $\delx$, sum over $i=0$ to $i=M$, and add the resulting expressions to obtain
    \begin{equation}
    \label{eqn:ent-1}
        \begin{split}
            \sum_{i=1}^M \frac{\delx}{\delt} \bigl(\psig(\vrho^{n+1}_i) - \psig(\vrho^{n}_i)\bigr) &+ \sum_{i=0}^M\frac{\delx}{\delt}\biggl(\half\vrho^{n+1}_{i+\half}(u^{n+1}_{i+\half})^2 - \half\vrho^n_{i+\half}(u^n_{i+\half})^2\biggr) \\
            &\leq -\eta\delt\sum_{i=0}^M \delx (\eth_\E p^{n+1}_{i+\half})^2 - \sum_{i=1}^M \delx R^{n+1}_i - \sum_{i=0}^M \delx S^{n+1}_{i+\half}.
        \end{split}
    \end{equation}
    Here, we have used the conservativity of the mass flux, along with the duality relation \eqref{eqn:disc-grad-div} in order to simplify the resulting expression. Note that $\sum_{i=0}^M\delx R^{n+1}_i\geq 0$ unconditionally because of the convexity of $\psig$. Next, in order to estimate the term $\sum_{n=0}^{M}\delx S^{n+1}_{i+\half}$, we use the identity $(p+q+r)^2\leq 3 (p^2 + q^2 + r^2)$ in the velocity balance \eqref{eqn:disc-vel-update} to obtain 
    \begin{equation}
    \label{eqn:ent-2}
        \begin{split}
            \frac{\vrho^{n+1}_{i+\half}}{2\delt}(u^{n+1}_{i+\half} - u^n_{i+\half})^2 \leq \frac{3\delt}{2\vrho^{n+1}_{i+\half}} \biggl(\frac{1}{\delx^2}(u^n_{i+\frac{3}{2}} &- u^n_{i+\half})^2(F^{n+1, -}_{i+1})^2 \\
            &+ \frac{1}{\delx^2}(u^n_{i-\half} - u^n_{i+\half})^2(F^{n+1, +}_{i})^2 + (\eth_\E p^{n+1}_{i+\half})^2 \biggr).
        \end{split}
    \end{equation}
    Then, we can combine \eqref{eqn:disc-ke-rem} with \eqref{eqn:ent-2} and utilize it in \eqref{eqn:ent-1} in order to yield the following expression:
    \begin{equation}
    \label{eqn:ent-3}
        \begin{split}
            \sum_{i=1}^M \frac{\delx}{\delt} \bigl(\psig(\vrho^{n+1}_i) &- \psig(\vrho^{n}_i)\bigr) + \sum_{i=0}^M\frac{\delx}{\delt}\biggl(\half\vrho^{n+1}_{i+\half}(u^{n+1}_{i+\half})^2 - \half\vrho^n_{i+\half}(u^n_{i+\half})^2\biggr) \\
            &\leq \sum_{i=0}^M \delx\biggl(\frac{F^{n+1,-}_{i+1}}{2\delx}(u^n_{i+\frac{3}{2}} - u^n_{i+\half})^2\biggl(\frac{3}{\vrho^{n+1}_{i+\half}}\frac{\delt}{\delx}F^{n+1,-}_{i+1} + 1\biggr)\biggr) \\
            &+\sum_{i=0}^M \delx\biggl(\frac{F^{n+1,+}_{i}}{2\delx}(u^n_{i+\half} - u^n_{i-\half})^2\biggl(\frac{3}{\vrho^{n+1}_{i+\half}}\frac{\delt}{\delx}F^{n+1,+}_{i} - 1\biggr)\biggr) \\
            &+\delt\sum_{i=0}^M \delx\biggl(\frac{3}{2\vrho^{n+1}_{i+\half}} - \eta\biggr) (\eth_\E p^{n+1}_{i+\half})^2. 
        \end{split}
    \end{equation}
    Now, under the assumptions (1) and (2), the right-hand side of \eqref{eqn:ent-3} remains non-positive. Multiplying by $\delt$ and rearranging the resulting expression gives us the desired inequality \eqref{eqn:disc-loc-ent-ineq}.
\end{proof}

We now state a global energy estimate, which can be proved using the inequality \eqref{eqn:ent-3}.

\begin{theorem}[Global energy estimate]
\label{thm:glob-ent-est}
    Assume that the conditions (1) and (2) stated in Theorem \eqref{thm:loc-ent-ineq} hold. Then, there exists a constant $C>0$ independent of $\veps$ such that for any $1\leq n\leq N$, the following global energy estimate holds:
    \begin{equation}
    \label{eqn:glob-ent-ineq}
        \sum_{i=1}^M\delx\psig(\vrho^n_i) + \sum_{i=0}^M\delx\half\vrho^n_{i+\half}(u^n_{i+\half})^2 +\sum_{r=0}^{n-1}\delt^2\sum_{i=0}^M\delx\biggl(\eta - \frac{3}{2\vrho^{r+1}_{i+\half}}\biggr)(\eth_\E p^{r+1}_{i+\half})^2\leq C.
    \end{equation}
\end{theorem}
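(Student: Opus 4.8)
The plan is to obtain \eqref{eqn:glob-ent-ineq} by summing over the time levels a per-step dissipative inequality that is already essentially contained in the proof of Theorem~\ref{thm:loc-ent-ineq}; concretely, the starting point is inequality \eqref{eqn:ent-3}, taken \emph{before} the final multiplication by $\delt$ and the crude discarding of the pressure term.

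First I would revisit the three sums on the right-hand side of \eqref{eqn:ent-3}. Under hypothesis~(2) of Theorem~\ref{thm:loc-ent-ineq}, the CFL-type bound forces $\frac{3}{\vrho^{r+1}_{i+\half}}\frac{\delt}{\delx}F^{r+1,-}_{i+1} + 1 \ge 0$ and $\frac{3}{\vrho^{r+1}_{i+\half}}\frac{\delt}{\delx}F^{r+1,+}_{i} - 1 \le 0$, which, combined with the sign properties $F^{r+1,-}_{i+1}\le 0\le F^{r+1,+}_{i}$ of the positive and negative parts, makes the first two sums non-positive. The third sum is non-positive under hypothesis~(1), since then $\tfrac{3}{2\vrho^{r+1}_{i+\half}} - \eta < 0$. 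Rather than discarding this last sum, I would keep it: multiplying \eqref{eqn:ent-3} through by $\delt$ and moving the pressure term to the left gives, for each $0\le r\le N-1$,
\[
\sum_{i=1}^M\delx\bigl(\psig(\vrho^{r+1}_i)-\psig(\vrho^{r}_i)\bigr) + \sum_{i=0}^M\delx\Bigl(\half\vrho^{r+1}_{i+\half}(u^{r+1}_{i+\half})^2 - \half\vrho^{r}_{i+\half}(u^{r}_{i+\half})^2\Bigr) + \delt^2\sum_{i=0}^M\delx\Bigl(\eta-\frac{3}{2\vrho^{r+1}_{i+\half}}\Bigr)(\eth_\E p^{r+1}_{i+\half})^2 \le 0.
\]

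Next I would sum this inequality over $r = 0, 1, \dots, n-1$. The first two sums telescope in $r$, leaving only the level-$n$ terms minus the level-$0$ terms, while the pressure dissipation contributions accumulate. Moving the level-$0$ terms to the right-hand side yields exactly the left-hand side of \eqref{eqn:glob-ent-ineq} bounded by $\sum_{i=1}^M\delx\psig(\vrho^0_i) + \sum_{i=0}^M\delx\half\vrho^0_{i+\half}(u^0_{i+\half})^2$, and invoking the discrete initial-energy hypothesis \eqref{eqn:disc-ini-enrg-hyp} bounds this last quantity by a constant $C$ independent of $\veps$, which completes the proof. I would also record that all three terms on the left of \eqref{eqn:glob-ent-ineq} are individually non-negative — the kinetic term because $\vrho^n_{i+\half}\ge 0$, the potential term because $\psig\ge 0$ (recall $\psig(0)=0$ and $\psig$ is convex), and the dissipation term because of hypothesis~(1) — so the estimate in fact controls each of them separately and uniformly in $\veps$.

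I do not expect a genuine obstacle here: the whole argument is a telescoping sum of an inequality already at hand. The only points needing a little care are the sign bookkeeping for the two convective remainder sums in \eqref{eqn:ent-3} under hypothesis~(2) (which is precisely where the CFL restriction is used), and the fact that the discrete initial energy in \eqref{eqn:disc-ini-enrg-hyp} is $\veps$-uniform — a property taken as an assumption here, consistent with the continuous hypothesis \eqref{eqn:ini-data-hyp} and the definition \eqref{eqn:schm-init} of the discrete initial data.
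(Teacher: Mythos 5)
Your proposal is correct and follows essentially the same route as the paper: the paper also retains the pressure dissipation term from \eqref{eqn:ent-3} to obtain the per-step inequality \eqref{eqn:glob-ent-1}, multiplies by $\delt$, telescopes over $r=0,\dots,n-1$, and invokes the initial-energy hypothesis \eqref{eqn:disc-ini-enrg-hyp}. Your additional sign bookkeeping for the convective remainder sums and the observation that each term on the left of \eqref{eqn:glob-ent-ineq} is individually non-negative are consistent with (and slightly more explicit than) the paper's argument.
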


\begin{proof}
    As a consequence of \eqref{eqn:ent-3} and the assumptions made, we see that for each $0\leq r\leq N-1$, the following inequality holds
    \begin{equation}
    \label{eqn:glob-ent-1}
        \begin{split}
            \sum_{i=1}^M \frac{\delx}{\delt} \bigl(\psig(\vrho^{r+1}_i) &- \psig(\vrho^{r}_i)\bigr) + \sum_{i=0}^M\frac{\delx}{\delt}\biggl(\half\vrho^{r+1}_{i+\half}(u^{r+1}_{i+\half})^2 - \half\vrho^r_{i+\half}(u^r_{i+\half})^2\biggr) \\
            &+\delt\sum_{i=0}^M \delx\biggl(\eta - \frac{3}{2\vrho^{r+1}_{i+\half}}\biggr) (\eth_\E p^{r+1}_{i+\half})^2\leq 0.
        \end{split}
    \end{equation}
    Multiplying throughout by $\delt$, summing over $r=0$ to $r=n-1$ and utilizing \eqref{eqn:disc-ini-enrg-hyp} yields \eqref{eqn:glob-ent-ineq}.
\end{proof}

The time-step condition required for stability is not only implicit in nature, but it involves the flux as well. Hence, it cannot be implemented in a straightforward manner. However, as done in \cite[Proposition 3.2]{CDV17}, we can derive a sufficient time-step condition which is easier to implement in practice; see also \cite{AGK23, AA24, DVB17, DVB20}. 

\begin{proposition}
\label{prop:suff-tstep}
    Suppose that the time step $\delt>0$ satisfies 
    \begin{equation}
    \label{eqn:suff-tstep}
        \frac{\delt}{\delx}\bigl(\lvert u^n_{i+\half}\rvert + \sqrt{\eta(p^{n+1}_{i+1} - p^{n+1}_i)}\bigr)\leq\max\biggl\lbrace 1, \frac{1}{8}\frac{\min\lbrace \vrho^n_i,\vrho^n_{i+1}\rbrace}{\max\lbrace\vrho^{n+1}_i,\vrho^{n+1}_{i+1}\rbrace}\biggr\rbrace.
    \end{equation}
    Then, $\delt$ also satisfies the condition (1) given in Theorem \ref{thm:loc-ent-ineq}.
\end{proposition}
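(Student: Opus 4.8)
The restriction that \eqref{eqn:suff-tstep} is designed to replace is the implicit, flux-dependent time-step bound in Theorem~\ref{thm:loc-ent-ineq}, which after clearing denominators reads $\tfrac{3\delt}{\delx}F^{n+1,+}_i\le\vrho^{n+1}_{i+\half}$ and $\tfrac{3\delt}{\delx}\abs{F^{n+1,-}_{i+1}}\le\vrho^{n+1}_{i+\half}$ at every interface. The mechanism I would exploit is that $\abs{u^n_{i+\half}}+\sqrt{\eta(p^{n+1}_{i+1}-p^{n+1}_i)}$ is exactly the advective speed plus the effective ``acoustic'' speed carried by the stabilization shift $\du^{n+1}_{i+\half}=\eta\delt\,\eth_\E p^{n+1}_{i+\half}$ of \eqref{eqn:vel-stab-term}: because this shift is itself $O(\delt)$, its contribution to the stabilized mass flux \eqref{eqn:disc-mss-flx} is quadratic in $\delt/\delx$, which is precisely why a square root — rather than the pressure increment itself — appears in \eqref{eqn:suff-tstep}.

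The first step is to bound the stabilized mass flux at an interface. From \eqref{eqn:disc-mss-flx}, \eqref{eqn:pos-neg-vel}, the positivity of the densities $\vrho^{n+1}_j$ (already available), and the identity $\tu^{n,+}_{j+\half}-\tu^{n,-}_{j+\half}=\abs{u^n_{j+\half}}+\abs{\du^{n+1}_{j+\half}}$, one obtains
\[
\abs{F^{n+1}_{j+\half}}\le\max\{\vrho^{n+1}_j,\vrho^{n+1}_{j+1}\}\bigl(\abs{u^n_{j+\half}}+\abs{\du^{n+1}_{j+\half}}\bigr).
\]
Next I would insert \eqref{eqn:suff-tstep}: writing $\theta_{j+\half}$ for its right-hand side, the bound gives $\tfrac{\delt}{\delx}\abs{u^n_{j+\half}}\le\theta_{j+\half}$ directly, and squaring it converts $\tfrac{\delt}{\delx}\abs{\du^{n+1}_{j+\half}}=\eta(\tfrac{\delt}{\delx})^2\abs{p^{n+1}_{j+1}-p^{n+1}_j}$ into $\theta_{j+\half}^2$. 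Since $\theta_{j+\half}\le 1$, the bracket is $\le 2\theta_{j+\half}$, and as $\theta_{j+\half}$ carries the factor $\max\{\vrho^{n+1}_j,\vrho^{n+1}_{j+1}\}$ in its denominator, the prefactor cancels and I am left with $\tfrac{\delt}{\delx}\abs{F^{n+1}_{j+\half}}\le c\,\min\{\vrho^n_j,\vrho^n_{j+1}\}$ for a small explicit constant $c$ (tracking $c$ is where the factor $\tfrac18$ in \eqref{eqn:suff-tstep} gets calibrated).

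From here I would pass to the cell fluxes via $F^{n+1}_i=\tfrac12(F^{n+1}_{i+\half}+F^{n+1}_{i-\half})$ and monotonicity of $r\mapsto r^{\pm}$, and compare with $\vrho^{n+1}_{i+\half}=\tfrac12(\vrho^{n+1}_i+\vrho^{n+1}_{i+1})$; to bridge the two time levels I would also use the elementary lower bound $\vrho^{n+1}_i\ge c'\,\vrho^n_i$, obtained by rewriting the discrete mass balance \eqref{eqn:disc-mss-bal} as $\vrho^{n+1}_i\bigl[1+\tfrac{\delt}{\delx}(\tu^{n,+}_{i+\half}-\tu^{n,-}_{i-\half})\bigr]=\vrho^n_i+(\text{nonnegative})$ — the last term being nonnegative by \eqref{eqn:pos-neg-vel} and positivity — and controlling the bracket by the same estimate derived from \eqref{eqn:suff-tstep}. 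Combining these pieces produces $\tfrac{3\delt}{\delx}F^{n+1,+}_i\le\vrho^{n+1}_{i+\half}$ and its mirror image for $\abs{F^{n+1,-}_{i+1}}$, which is the claim; this part follows the template of \cite[Proposition~3.2]{CDV17} and the related references, the only paper-specific ingredient being the square-root term that absorbs the shift.

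The main obstacle is conceptual rather than computational: \eqref{eqn:suff-tstep} is not itself explicit, since $\vrho^{n+1}$ and $p^{n+1}$ appear both on its right-hand side and inside the fluxes $F^{n+1}$, so I must be careful that the chain of inequalities does not quietly invoke its own conclusion. This is safe because every estimate above uses only (i) the unconditional positivity $\vrho^{n+1}\ge 0$, (ii) the hypothesis \eqref{eqn:suff-tstep} as stated, and (iii) the positive/negative-part identities; in particular no a priori lower bound on $\vrho^{n+1}$ is needed, because \eqref{eqn:suff-tstep} directly caps the mass that can leave a cell in one step. The remaining, routine task is to verify that the numerical constants — the factor $\tfrac18$ and the structure keeping $\theta_{j+\half}\le1$ so that $\theta_{j+\half}^2\le\theta_{j+\half}$ — are chosen sharply enough for the final comparison to close.
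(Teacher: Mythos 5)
The paper itself does not reproduce this proof; it defers entirely to \cite[Proposition~3.2]{CDV17}, so your reconstruction can only be judged against that standard template --- and your overall strategy does match it: you correctly read ``condition (1)'' as the flux-dependent CFL condition (2) of Theorem~\ref{thm:loc-ent-ineq}, you correctly identify $\tu^{n,+}_{i+\half}-\tu^{n,-}_{i+\half}=\abs{u^n_{i+\half}}+\abs{\du^{n+1}_{i+\half}}$ as the key flux bound, and the observation that the shift contributes quadratically in $\delt/\delx$ (hence the square root in \eqref{eqn:suff-tstep}) is exactly the right mechanism. Two points, however, need to be made honest rather than implicit.

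First, your argument hinges on $\theta_{i+\half}\le 1$ so that $\theta_{i+\half}^2\le\theta_{i+\half}$; but the right-hand side of \eqref{eqn:suff-tstep} as printed is $\max\{1,\cdot\}\ge 1$, under which $\theta^2\le\theta$ fails and the condition is too weak to control the fluxes at all. You are silently replacing the $\max$ by a $\min$. That replacement is almost certainly the intended statement (it is what the subsequent remark's consequence $\frac{\delt}{\delx}(\abs{F^{n+1}_{i+1}}+\abs{F^{n+1}_i})\le\frac14\vrho^n_{i+\half}$ requires, and it matches the cited source), but a proof of the proposition \emph{as stated} does not exist, and you should say explicitly that you are proving the $\min$ version. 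Second, your bridge between time levels --- the per-primal-cell bound $\vrho^{n+1}_i\ge c'\vrho^n_i$ obtained from $\vrho^{n+1}_i\bigl[1+\frac{\delt}{\delx}(\tu^{n,+}_{i+\half}-\tu^{n,-}_{i-\half})\bigr]\ge\vrho^n_i$ --- is the weak link: the bracket is only controlled by $1+4\theta$, and since $\theta$ may equal $1$ this yields $c'=\tfrac15$, which is not enough to absorb the factor $3$ in condition (2); sharpening it instead reintroduces $\vrho^{n+1}$ in a denominator and risks circularity. The clean bridge is the dual mass balance \eqref{eqn:dual-mss-bal}, which gives directly $\vrho^{n+1}_{i+\half}\ge\vrho^n_{i+\half}-\frac{\delt}{\delx}(\abs{F^{n+1}_{i+1}}+\abs{F^{n+1}_i})\ge(1-c)\vrho^n_{i+\half}$ at the dual cell where the comparison is actually needed --- this is precisely what the paper's own remark following the proposition exploits, and with it the constant $\tfrac18$ closes the estimate $\frac{3\delt}{\delx}\max\{F^{n+1,+}_i,-F^{n+1,-}_{i+1}\}\le\vrho^{n+1}_{i+\half}$ without further work. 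With these two repairs your argument is sound.
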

\begin{proof}
    The proof is exactly the same as detailed in \cite[Proposition 3.2]{CDV17} and we defer to it for the same.
\end{proof}

\begin{remark}
    Note that the sufficient condition \eqref{eqn:suff-tstep} is still implicit in nature. Therefore, we implement it in an explicit manner in our computations.
\end{remark}

\begin{remark}
    It is easy to observe that the condition \eqref{eqn:suff-tstep} implies the time-step $\delt$ satsifies
    \[
    \frac{\delt}{\delx}\bigl(\lvert F^{n+1}_{i+1}\rvert + \lvert F^{n+1}_i\rvert\bigr)\leq \frac{1}{4}\vrho^n_{i+\half}.
    \]
    Hence, we can observe that
    \begin{align*}
        \vrho^{n+1}_{i+\half}-\frac{3}{4}\vrho^n_{i+\half}\geq\vrho^{n+1}_{i+\half}-\vrho^n_{i+\half}+ \frac{\delt}{\delx}\bigl(\lvert F^{n+1}_{i+1}\rvert + \lvert F^{n+1}_i\rvert\bigr)\geq 0.
    \end{align*}
    where the last inequality follows because of the dual mass balance \eqref{eqn:dual-mss-bal}. Therefore, we obtain that 
    \begin{equation}
    \label{eqn:den-ineq}
        \frac{2}{\vrho^n_{i+\half}}\geq\frac{3}{2\vrho^{n+1}_{i+\half}}.
    \end{equation}
    Now, in accordance with the stability analysis, cf.\ Theorem \ref{thm:loc-ent-ineq}, $\eta$ should satisfy $\eta > \frac{3}{2\vrho^{n+1}_{i+\half}}$. Hence, if we choose $\eta>\frac{2}{\vrho^n_{i+\half}}$, the necessary condition will be satisfied and thus, we can also obtain $\eta$ explicitly.

    Furthermore, we can make a theoretical choice for $\eta$ independent of $\veps$ in the following manner, provided the initial density remains bounded below uniformly with respect to $\veps$. From \eqref{eqn:den-ineq}, we can inductively obtain that for each $0\leq n\leq N-1$,
    \[
        2\biggl(\frac{4}{3}\biggr)^n\frac{1}{\vrho^0_{i+\half}}\geq \frac{3}{2\vrho^{n+1}_{i+\half}}.
    \]
    If $\underline{\vrho^0}>0$ is constant independent of $\veps$ such that $\vrho_\epso\geq\underline{\vrho^0}>0$, then we can obtain
    \begin{equation}
    \label{eqn:eta-choice}
        2\biggl(\frac{4}{3}\biggr)^{N-1}\frac{1}{\underline{\vrho^0}}\geq\frac{3}{2\vrho^{n+1}_{i+\half}}.
    \end{equation}
    Therefore, we can choose $\eta$ as 
    \[
    \eta>2\biggl(\frac{4}{3}\biggr)^{N-1}\frac{1}{\underline{\vrho^0}}\geq\frac{3}{2\vrho^{n+1}_{i+\half}},
    \]
    and this this choice ensures that $\eta$ is independent of $\veps$ (but not independent of the discretization) and further satisfies the required stability condition. Therefore, on a fixed mesh, we can choose $\eta$ such that it is a constant independent of $\veps$. In addition, \eqref{eqn:eta-choice} along with the global energy estimate \eqref{eqn:glob-ent-ineq} yields the following $L^2$-estimate which is uniform in $\veps$ for the stabilization term \eqref{eqn:vel-stab-term} for each $0\leq n\leq N-1$
    \begin{equation}
        \label{eqn:stab-term-est}
            \sum_{i=0}^M\delx(\du^{n+1}_{i+\half})^2\leq C\eta^2.
    \end{equation}
\end{remark}

\subsubsection{Density bound and control of pressure}

The upcoming lemma shows that the numerical densities generated by the scheme are strictly bounded above, away from 1. The proof of this result can be directly adapted to the current setting from \cite[Lemma 4.6]{PS22}. The proof utilizes the fact that the free energy $\psig$ is now bounded above uniformly in $\veps$ as a consequence of \eqref{eqn:glob-ent-ineq}. For the sake of brevity, we just state the result and refer to the aforementioned reference for a proof.

\begin{lemma}[Bound on the discrete density]
\label{lem:disc-den-bd}
    Suppose $\gamma\in(1,3\rbrack$. Then, for $\veps$ and $\delx$ sufficiently small, there exists a constant $C_\M > 0$ which is independent of $\veps$ but dependent on the mesh, such that
    \begin{equation}
    \label{eqn:disc-den-bd}
        \vrho^n_i \leq 1-C_\M\veps^{\frac{1}{\gamma - 1}},\quad\text{for each }1\leq i\leq M\text{ and }1\leq n\leq N.
    \end{equation}
\end{lemma}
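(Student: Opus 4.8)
The plan is to pit the uniform-in-$\veps$ bound on the discrete free energy coming from Theorem~\ref{thm:glob-ent-est} against a quantitative description of how fast the pressure potential $\psig$ blows up at the congestion density $z=1$, in the spirit of \cite[Lemma~4.6]{PS22}.

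First I would record the structure of $\psig$. The defining relation $z\psig'(z)-\psig(z)=p_\veps(z)$ is linear in $\psig$, so it fixes $\psig$ up to an additive multiple of $z$, which is bounded on $[0,1]$ and hence irrelevant near $z=1$; dividing by $z^2$ and integrating gives
\[
\psig(z) = z\int_0^z\frac{p_\veps(s)}{s^2}\,\mathrm{d}s = \veps\, z\int_0^z\frac{s^{\gamma-2}}{(1-s)^\gamma}\,\mathrm{d}s ,
\]
the integral at $s=0$ converging precisely because $\gamma>1$. In particular $\psig=\veps\,h_1$ with $h_1$ independent of $\veps$. Since $\gamma\in(1,3]$, the factor $s^{\gamma-2}$ stays bounded below by a positive constant on $[\tfrac12,1)$, so integrating the singular part $(1-s)^{-\gamma}$ yields a threshold $z_*\in(0,1)$ and a constant $c>0$, both independent of $\veps$, such that $\psig(z)\ge c\,\veps\,(1-z)^{-(\gamma-1)}$ for every $z\in[z_*,1)$.

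Next I would extract the energy bound: discarding the nonnegative kinetic-energy and pressure-dissipation contributions in \eqref{eqn:glob-ent-ineq} and keeping a single index, one gets $\delx\,\psig(\vrho^n_i)\le C$ for all $1\le i\le M$, $1\le n\le N$, with $C$ independent of $\veps$; hence $\psig(\vrho^n_i)\le\Lambda_\M:=C/\delx$ uniformly in $\veps$ (but mesh-dependent). Combining the two, I split into cases. If $\vrho^n_i\ge z_*$, the lower bound on $\psig$ gives $c\,\veps\,(1-\vrho^n_i)^{-(\gamma-1)}\le\Lambda_\M$, that is $1-\vrho^n_i\ge(c/\Lambda_\M)^{1/(\gamma-1)}\veps^{1/(\gamma-1)}$; if $\vrho^n_i<z_*$, then $1-\vrho^n_i>1-z_*$, which dominates $(c/\Lambda_\M)^{1/(\gamma-1)}\veps^{1/(\gamma-1)}$ as soon as $\veps$ is small enough relative to the mesh. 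Either way \eqref{eqn:disc-den-bd} follows with $C_\M:=(c/\Lambda_\M)^{1/(\gamma-1)}$.

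The bookkeeping in the last paragraph is routine; the crux is the first one — obtaining the blow-up rate $\psig(z)\gtrsim\veps\,(1-z)^{-(\gamma-1)}$ near $z=1$ with constants that do not degenerate as $\veps\to0$. This is exactly where the factorization $\psig=\veps\,h_1$ and the restriction $\gamma\in(1,3]$ are used, and it is the step I expect to demand the most care.
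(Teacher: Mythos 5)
Your proposal is correct and follows essentially the same route as the paper, which simply defers to \cite[Lemma~4.6]{PS22}: the key ingredients there are exactly the ones you identify, namely the $\veps$-uniform (mesh-dependent) bound $\psig(\vrho^n_i)\le C/\delx$ extracted from the global energy estimate \eqref{eqn:glob-ent-ineq}, combined with the blow-up rate $\psig(z)\ge c\,\veps\,(1-z)^{-(\gamma-1)}$ near $z=1$ obtained from the explicit representation $\psig(z)=\veps\,z\int_0^z s^{\gamma-2}(1-s)^{-\gamma}\,\mathrm{d}s$. Your write-up supplies the details the paper omits, and the argument is sound.
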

\begin{proof}
    See \cite[Lemma 4.6]{PS22}.
\end{proof}

\begin{remark}
    The density bound stated in the above lemma is precisely the discrete variant of \eqref{eqn:wk-den-bound}, which is satisfied by weak solutions of system \eqref{eqn:eul-sys}-\eqref{eqn:cong-pres}.
\end{remark}

Given that the numerical densities are strictly less than 1 due to \eqref{eqn:disc-den-bd}, the discrete pressure gradient appearing in the mass balance \eqref{eqn:disc-mss-bal} as part of the mass flux is now well-defined, cf.\ \eqref{eqn:disc-mss-flx} and \eqref{eqn:vel-stab-term}. Further, because of the presence of the stabilization term, the mass balance \eqref{eqn:disc-mss-bal} is non-linear in nature. However, once the updated density $\vrho^{n+1}_i$ is obtained, the momentum balance \eqref{eqn:disc-mom-bal} can be solved explicitly to yield $u^{n+1}_{i+\half}$. As such, we only need to show the existence of $\vrho^{n+1}_i$ from the mass balance, given $(\vrho^n_i, u^n_{i+\half})$. Using the tools of topological degree theory in finite dimensions, cf.\ \cite{Dei85, Nir01}, the existence of a solution to the non-linear mass balance can be shown in an analogous manner as given in \cite{AGK23, GMN19}. For the sake of completeness, we just state the result and defer to \cite[Theorem 4.2]{AGK23}

\begin{theorem}[Existence of a solution to the scheme]
\label{thm:soln-scheme}
    Suppose the given initial state $(\vrho^{n}_i, u^n_{i+\half})$ is such that $\vrho^n_i > 0$ on $\Omega$. Then, there exists a solution $\vrho^{n+1}_i$ of \eqref{eqn:disc-mss-bal} such that $\vrho^{n+1}_i > 0$ on $\Omega$. In particular, if $\vrho^0_i > 0$, then $\vrho^{n}_i > 0$ for each $1\leq n\leq N$. 
\end{theorem}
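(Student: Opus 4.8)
The plan is to reformulate the discrete mass balance \eqref{eqn:disc-mss-bal} as a zero–finding problem for a continuous finite–dimensional map and to apply the topological degree, following \cite[Theorem 4.2]{AGK23} and \cite{GMN19}. Fix the time level $n$ and treat $(\vrho^n_i,u^n_{i+\half})_i$ as given. Because the stabilization $\du^{n+1}_{i+\half}=\eta\delt\,\eth_\E p^{n+1}_{i+\half}$, and hence the split velocities $\tu^{n,\pm}_{i+\half}$ and the mass fluxes $F^{n+1}_{i+\half}$, depend on the unknown $\vrho^{n+1}$ through the congestion pressure, \eqref{eqn:disc-mss-bal} is a nonlinear system $\Phi(\vrho^{n+1})=0$, where $\Phi=(\Phi_i)_{i=1}^{M}$ is continuous on the open bounded set $\mathcal{O}=\{\vrho\in\R^M:0<\vrho_i<1\text{ for all }i\}$, the natural domain on which $p_\veps(\vrho_i)$ is finite. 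The degree is therefore available on open subsets of $\mathcal{O}$.

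Next I would introduce the homotopy $\Phi_\lambda$, $\lambda\in[0,1]$, obtained by multiplying the flux–divergence term in $\Phi$ by $\lambda$, so that $\Phi_1=\Phi$ while $\Phi_0(\vrho)=\vrho-\vrho^n$ has the single non-degenerate zero $\vrho^n$. Since $0<\vrho^n_i<1$ by hypothesis, one can fix $\delta>0$ small enough that $\vrho^n$ lies in the open box $\mathcal{O}_\delta:=\{\vrho:\delta<\vrho_i<1-\delta\text{ for all }i\}\subset\mathcal{O}$, so that $d(\Phi_0,\mathcal{O}_\delta,0)=1$. The proof then reduces to showing that no zero of any $\Phi_\lambda$, $\lambda\in[0,1]$, lies on $\partial\mathcal{O}_\delta$; homotopy invariance of the degree then gives $d(\Phi,\mathcal{O}_\delta,0)=1\neq0$, hence a zero $\vrho^{n+1}\in\mathcal{O}_\delta$, which is in particular strictly positive. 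The final assertion follows by induction on $n$, the base case $\vrho^0_i>0$ being guaranteed by \eqref{eqn:schm-init}--\eqref{eqn:ini-data-hyp}.

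The heart of the argument is the a priori control of the zeros of $\Phi_\lambda$. Summing $\Phi_\lambda(\vrho)_i=0$ over $i$ with periodic boundary conditions and using conservativity of the mass flux gives $\sum_i\delx\vrho_i=\sum_i\delx\vrho^n_i$, a fixed constant. The decisive point is a discrete minimum principle: writing $\Phi_\lambda(\vrho)_i=0$ as
\[
\vrho_i\Bigl(1+\lambda\tfrac{\delt}{\delx}\bigl(\tu^{n,+}_{i+\half}-\tu^{n,-}_{i-\half}\bigr)\Bigr)
=\vrho^n_i+\lambda\tfrac{\delt}{\delx}\bigl(\tu^{n,+}_{i-\half}\vrho_{i-1}-\tu^{n,-}_{i+\half}\vrho_{i+1}\bigr),
\]
one uses the structural feature of the scheme that, thanks to the splitting \eqref{eqn:pos-neg-vel}, $\tu^{n,+}_{i+\half}\geq0$ and $\tu^{n,-}_{i+\half}\leq0$ hold regardless of the value of $\du^{n+1}_{i+\half}$; hence the coefficient of $\vrho_i$ on the left is $\geq1$ and the two couplings on the right have a fixed sign. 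Evaluating this relation at an index where $\vrho$ is minimal, comparing the neighbouring values to the minimum, and invoking a CFL–type restriction on $\delt/\delx$ (such as the one adopted throughout, cf.\ \eqref{eqn:suff-tstep}) to keep the resulting bracket positive, one obtains $\min_i\vrho_i\geq c\,\min_i\vrho^n_i>0$ for some constant $c>0$; choosing $\delta$ below this bound rules out zeros on the faces $\{\vrho_i=\delta\}$. That zeros cannot reach the opposite faces $\{\vrho_i=1-\delta\}$ is the uniform-in-$\lambda$ bound away from the congestion density, which comes from the boundedness of the Helmholtz potential via the discrete renormalization identity \eqref{eqn:disc-renorm} (cf.\ Lemma~\ref{lem:disc-den-bd}), confining each $\vrho_i$ to a compact subinterval of $[0,1)$ depending only on the data, $\veps$ and the mesh.

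I expect the discrete minimum principle to be the main obstacle, since the stabilization term makes the mass update genuinely nonlinear and, through the pressure, couples the upwinding velocity to the very unknown being estimated; the sign bookkeeping enforced by \eqref{eqn:pos-neg-vel} together with the time-step restriction is exactly what makes it close, and must be carried out carefully. Once both a priori bounds are in hand, the degree computation and the induction on $n$ are routine, and the remaining details are those of \cite[Theorem 4.2]{AGK23} and \cite{GMN19} adapted to the congestion flux.
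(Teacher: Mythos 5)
Your proposal follows exactly the route the paper takes: the paper's proof of Theorem~\ref{thm:soln-scheme} is itself only a deferral to the topological-degree argument of \cite[Theorem 4.2]{AGK23} and \cite{GMN19}, and your homotopy-to-identity construction with a priori lower and upper bounds on the zeros is precisely that argument fleshed out. The only remark worth making is that the lower bound is even easier than you suggest: since $\tu^{n,+}_{i+\half}\geq 0$ and $\tu^{n,-}_{i+\half}\leq 0$ by \eqref{eqn:pos-neg-vel}, the coefficient of $\vrho_i$ in your rearranged identity is $\geq 1$ while the right-hand side is $\geq\vrho^n_i>0$ whenever the neighbours are nonnegative, so strict positivity follows from the sign structure alone without any CFL-type restriction.
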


A control over the pressure term, which is uniform in $\veps$, can now be established using a one dimensional Bogovskii-type estimate as done in \cite{BPZ14}.
\begin{lemma}[Pressure estimate]
\label{lem:pres-est}
    Suppose there exists a constant $\underline{\vrho^0}>0$ independent of $\veps$ such that $\vrho_\epso\geq\underline{\vrho^0}>0$ uniformly for $\veps>0$. Then, for each $0\leq n\leq N-1$, we have 
    \begin{equation}
    \label{eqn:pres-est}
        \sum_{i = 1}^M\delx p^{n+1}_i\leq C_\M,
    \end{equation}
    where $C_\M>0$ is a constant independent of $\veps$.
\end{lemma}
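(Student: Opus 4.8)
The plan is to transpose the one-dimensional Bogovskii argument of \cite{BPZ14} to the staggered mesh. The test function is the discrete antiderivative of the centered density deviation: setting $\overline{\vrho}^{n+1}:=\frac{1}{\abs{\Omega}}\sum_{j=1}^M\delx\vrho^{n+1}_j$, the sequence $(\vrho^{n+1}_i-\overline{\vrho}^{n+1})_{1\le i\le M}$ has zero mean, so there is a periodic grid function $v=(v_{i+\half})_{0\le i\le M}$, unique up to an additive constant, with $\eth_\M v_i=\vrho^{n+1}_i-\overline{\vrho}^{n+1}$; explicitly $v_{i+\half}=v_{\half}+\sum_{j=1}^i\delx(\vrho^{n+1}_j-\overline{\vrho}^{n+1})$, and periodicity follows from the zero-mean property. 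The crucial point -- the substance of the Bogovskii step in one dimension -- is that, fixing the constant so that $v$ is centered, $\norm{v}_{\ell^\infty}\le\frac12\sum_{j}\delx\abs{\vrho^{n+1}_j-\overline{\vrho}^{n+1}}\le\frac12\abs{\Omega}$, a bound uniform in $\veps$ and in the mesh because $0\le\vrho^{n+1}_i<1$ (the strict inequality, hence finiteness of each $p^{n+1}_i$, being guaranteed by Lemma~\ref{lem:disc-den-bd}).

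First I would record the uniform bound on the mean density. Since the mass flux is conservative, summing the discrete mass balance \eqref{eqn:disc-mss-bal} over $i$ telescopes to $\sum_j\delx\vrho^{n+1}_j=\sum_j\delx\vrho^0_j=\int_\Omega\vrho_\epso\,\dx$, whence by \eqref{eqn:ini-data-hyp}, $\overline{\vrho}^{n+1}=\overline{\vrho}_\epso\le\overline{\vrho}<1$ for every $n$, uniformly in $\veps$. Fix the threshold $\delta:=\frac12(1-\overline{\vrho})>0$, so $\overline{\vrho}+\delta<1$. On the indices with $\vrho^{n+1}_i\ge\overline{\vrho}+\delta$ one has $\vrho^{n+1}_i-\overline{\vrho}^{n+1}\ge\delta$, hence $p^{n+1}_i(\vrho^{n+1}_i-\overline{\vrho}^{n+1})\ge\delta\,p^{n+1}_i$; on the remaining indices, monotonicity of $z\mapsto(z/(1-z))^\gamma$ gives $p^{n+1}_i\le\veps\bigl(\tfrac{\overline{\vrho}+\delta}{1-\overline{\vrho}-\delta}\bigr)^\gamma=:K_0\veps$ while $\abs{\vrho^{n+1}_i-\overline{\vrho}^{n+1}}\le1$. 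Summing over $i$ and splitting $\sum_i\delx p^{n+1}_i$ across the two index sets yields the coercivity inequality
\[
    \sum_{i=1}^M\delx\, p^{n+1}_i(\vrho^{n+1}_i-\overline{\vrho}^{n+1})\ \ge\ \delta\sum_{i=1}^M\delx\, p^{n+1}_i\ -\ 2K_0\veps\abs{\Omega}.
\]

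Next I would invoke the gradient--divergence duality \eqref{eqn:disc-grad-div} with $w=v$ to rewrite the left-hand side as $\sum_{i=1}^M\delx p^{n+1}_i\eth_\M v_i=-\sum_{i=0}^M\delx v_{i+\half}\,\eth_\E p^{n+1}_{i+\half}$, and bound this by $\norm{v}_{\ell^\infty}\sum_i\delx\abs{\eth_\E p^{n+1}_{i+\half}}\le\norm{v}_{\ell^\infty}\abs{\Omega}^{1/2}\bigl(\sum_i\delx(\eth_\E p^{n+1}_{i+\half})^2\bigr)^{1/2}$. Since $\du^{n+1}_{i+\half}=\eta\delt\,\eth_\E p^{n+1}_{i+\half}$ by \eqref{eqn:vel-stab-term}, the stabilization estimate \eqref{eqn:stab-term-est} gives $\delt^2\sum_i\delx(\eth_\E p^{n+1}_{i+\half})^2\le C$ with $C$ independent of $\veps$ -- here the hypothesis $\vrho_\epso\ge\underline{\vrho^0}>0$ is exactly what makes the admissible $\eta$, and therefore this estimate, $\veps$-independent. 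Combining with the coercivity inequality, dividing by $\delta$ and using $\veps\le1$, one obtains $\sum_{i=1}^M\delx p^{n+1}_i\le C_\M$ with $C_\M=\tfrac{1}{\delta}\bigl(\tfrac12\abs{\Omega}^{3/2}\sqrt{C}\,\delt^{-1}+2K_0\abs{\Omega}\bigr)$: independent of $\veps$, but dependent on the mesh (through $\delt^{-1}$) and on $\overline{\vrho},\abs{\Omega},\eta$.

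The main obstacle is the coercivity step: forcing $\sum_i\delx p^{n+1}_i(\vrho^{n+1}_i-\overline{\vrho}^{n+1})$ to dominate $\delta\sum_i\delx p^{n+1}_i$ genuinely needs the uniform strict inequality $\overline{\vrho}^{n+1}\le\overline{\vrho}<1$ together with the threshold $\delta$, since $\vrho^{n+1}_i-\overline{\vrho}^{n+1}$ can be arbitrarily small precisely on the cells where $p^{n+1}_i$ is large, and dropping $\delta$ destroys the bound. The mesh-dependence of $C_\M$ (the factor $\delt^{-1}$) is intrinsic to this single-time-step estimate -- it reflects that $\eth_\E p^{n+1}$ is controlled only through the $O(\delt)$-small stabilization, with no $\veps$-uniform bound on $\eth_\E p^{n+1}$ itself available -- and is consistent with the statement of the lemma.
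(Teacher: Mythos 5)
Your proof is correct, and it reaches the crucial intermediate bound by a genuinely different route from the paper's. Both arguments use the same Bogovskii-type test function (the discrete antiderivative of $\vrho^{n+1}-\overline{\vrho^{n+1}}$, bounded in $\ell^\infty$ thanks to the density bound) and essentially the same coercivity step at the end (splitting the cells according to whether $\vrho^{n+1}_i$ exceeds a threshold strictly between $\overline{\vrho}$ and $1$, with conservation of mass guaranteeing $\overline{\vrho^{n+1}}=\overline{\vrho}_\epso\le\overline{\vrho}<1$). The difference lies in how the pairing $\sum_i\delx\,p^{n+1}_i(\vrho^{n+1}_i-\overline{\vrho^{n+1}})$ is controlled: the paper tests the discrete momentum balance \eqref{eqn:disc-mom-bal} with the test function, which additionally requires the $\veps$-uniform $L^2$ control of the velocities and of the convective fluxes supplied by the energy estimate and the density bound; you instead apply the duality \eqref{eqn:disc-grad-div} and Cauchy--Schwarz directly, using $\delt^2\sum_i\delx(\eth_\E p^{n+1}_{i+\half})^2\le C$ from the stabilization estimate \eqref{eqn:stab-term-est} --- and you correctly identify that the hypothesis $\vrho_\epso\ge\underline{\vrho^0}>0$ is precisely what makes $\eta$, and hence that estimate, $\veps$-independent. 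Your route is shorter and bypasses the momentum equation entirely; its price is a constant degenerating like $\delt^{-1}$, but the paper's constant is likewise mesh-dependent (through the inverse factors $\delt^{-1}$, $\delx^{-1}$ appearing in the momentum balance), so nothing is lost relative to the stated lemma. In fact, your $\ell^2$ bound on $\eth_\E p^{n+1}$ combined with the observation that at least one cell must satisfy $\vrho^{n+1}_{i_0}\le\overline{\vrho}$ (so $p^{n+1}_{i_0}\le\veps(\overline{\vrho}/(1-\overline{\vrho}))^\gamma$) would even yield a mesh-dependent, $\veps$-uniform $\ell^\infty$ bound on the pressure, which is stronger than \eqref{eqn:pres-est}.
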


\begin{proof}
    Let $\varphi^{n+1}_{\half} = 0$ and for $1\leq i\leq M$, let $\varphi^{n+1}_{i+\half} = \sum_{j = 1}^{i}\delx(\vrho^{n+1}_j - \overline{\vrho^{n+1}})$, where $\overline{\vrho^{n+1}} = \frac{1}{\abs{\Omega}}\sum_{i = 1}^M\delx\vrho^{n+1}_i$ denotes the average value of $\vrho^{n+1}$. The bound on the density guarantees that $\varphi^{n+1}$ is bounded uniformly with respect to $\veps$. Multiplying the momentum balance \eqref{eqn:disc-mom-bal} with $\delx\varphi^{n+1}_{i+\half}$ and summing over $i$ yields 
    \begin{equation}
    \label{eqn:pres-1}
        \begin{split}
            -\sum_{i=0}^M\delx\varphi^{n+1}_{i+\half}\eth_\E p^{n+1}_{i+\half} &= \sum_{i=0}^M\delx\varphi^{n+1}_{i+\half}\biggl(\frac{1}{\delt}(\vrho^{n+1}_{i+\half}u^{n+1}_{i+\half} - \vrho^n_{i+\half}u^n_{i+\half})\biggr) \\ 
            &\quad+\sum_{i=0}^M\delx\varphi^{n+1}_{i+\half}\biggl(\frac{1}{\delx}(F^{n+1}_{i+1}u^n_{i+1} - F^{n+1}_i u^n_i)\biggr).
        \end{split}
    \end{equation}
    Now, as a consequence of the density bound \eqref{eqn:disc-den-bd} and the energy estimate \eqref{eqn:glob-ent-ineq}, one can obtain a control on the $L^2$-norm of the discrete velocity $u^n$ independent of $\veps$ for each $1\leq n\leq N$. As a consequence, the first term on the right-hand side is bounded uniformly with respect to $\veps$. Further, note that $F^{n+1}_i \leq \half(\tu^n_{i+\half} + \tu^n_{i-\half})$ due to \eqref{eqn:disc-den-bd}. Now, $\tu^n_{i+\half} = u^n_{i+\half} - \du^{n+1}_{i+\half}$ is also bounded in the $L^2$-norm thanks to the estimate \eqref{eqn:stab-term-est} and the aforementioned estimate on the velocity. As a consequence, even the second term on the right-hand side is bounded independent of $\veps$. Finally, using \eqref{eqn:disc-grad-div}, we obtain
    \begin{equation}
    \label{eqn:pres-2}
        \abs{\sum_{i=1}^M\delx p^{n+1}_i\eth_\M\varphi^{n+1}_i} = \abs{\sum_{i=1}^M\delx p^{n+1}_i(\vrho^{n+1}_i - \overline{\vrho^{n+1}})} = \abs{\int_\Omega p^{n+1}(\vrho^{n+1} - \overline{\vrho^{n+1}})\,\dx} \leq C_\M,
    \end{equation}
    where $C_\M>0$ is a constant independent of $\veps$ but dependent on the mesh. Next, the integral on the left-hand side above is split over two domains, $\Omega_1 = \lbrace\vrho^{n+1}\leq \frac{1+\overline{\vrho}}{2}\rbrace$ and $\Omega_2 = \lbrace\vrho^{n+1} > \frac{1+\overline{\vrho}}{2}\rbrace$, where $\overline{\vrho}<1$ is given in \eqref{eqn:ini-data-hyp}. On $\Omega_1$, as the density is far from the singular value 1, the pressure is uniformly bounded with respect to $\veps$ due to the density bound and hence, the corresponding integral is also controlled. Finally, we have
    \begin{equation}
    \label{eqn:pres-3}
    \begin{split}
        C_\M\geq\abs{\int_{\Omega_2}p^{n+1}(\vrho^{n+1} - \overline{\vrho^{n+1}})\,\dx}&\geq \biggl(\frac{1+\overline{\vrho}}{2} - \overline{\varrho^{n+1}}\biggr)\int_{\Omega_2}p^{n+1}\,\dx \\
        &\geq\frac{1-\overline{\varrho}}{2}\int_{\Omega_2}p^{n+1}\,\dx,
    \end{split}
    \end{equation}
    wherein the last inequality follows due to the conservation of mass implied by the mass balance \eqref{eqn:disc-mss-bal}. As $\overline{\varrho}<1$, we obtain that the pressure is uniformly bounded in $L^1(\Omega)$ which completes the proof.
\end{proof}

\subsection{Two-dimensional finite volume scheme}
\label{subsec:2d-scheme}

In two dimensions, the Euler system with the congestion pressure law \eqref{eqn:cong-pres} reads:
\begin{subequations}
\label{eqn:2d-eul-sys}
    \begin{gather}
        \Dt\vrho_\veps + \Dx(\vrho_\veps u_\veps) + \Dy(\vrho_\veps v_\veps) = 0, \label{eqn:2d-mss-bal} \\
        \Dt(\vrho_\veps u_\veps) + \Dx(\vrho_\veps (u_\veps)^2) + \Dy(\vrho_\veps u_\veps v_\veps) + \Dx(p_\veps(\vrho_\veps)) = 0, \label{eqn:2d-x-mom-bal} \\
        \Dt(\vrho_\veps v_\veps) + \Dx(\vrho_\veps u_\veps v_\veps) + \Dy(\vrho_\veps (v_\veps)^2) + \Dy(p_\veps(\vrho_\veps)) = 0, \label{eqn:2d-y-mom-bal} \\
        \vrho_\veps(0,\cdot) = \vrho_\epso,\quad (u_\veps, v_\veps)(0,\cdot) = (u_\epso, v_\epso). \label{eqn:2d-ini-dat}
    \end{gather}
\end{subequations}
for $(t,x)\in\odom$, where $\Omega\subset\R^2$ is an open and bounded set. Here, $\vrho_\veps$ is the density of the fluid and $u_\veps$ and $v_\veps$ denote the $x$ and $y-$components of the fluid velocity respectively. The pressure $p_\veps(\vrho_\veps)$ is given by $\eqref{eqn:cong-pres}$.

Analogous to the one-dimensional case, we introduce the shifted velocities $\tu_\veps = u_\veps - \du_\veps$ and $\tv_\veps = v_\veps - \delta v_\veps$. Here, $\du_\veps = \eta\Dx p_\veps(\vrho_\veps)$ and $\delta v_\veps = \eta \Dy p_\veps(\vrho_\veps)$ where $\eta>0$. We consider the following velocity stabilized system:
\begin{subequations}
\label{eqn:2d-eul-sys-stab}
    \begin{gather}
        \Dt\vrho_\veps + \Dx(\vrho_\veps \tu_\veps) + \Dy(\vrho_\veps \tv_\veps) = 0, \label{eqn:2d-mss-bal-stab} \\
        \Dt(\vrho_\veps u_\veps) + \Dx(\vrho_\veps \tu_\veps u_\veps) + \Dy(\vrho_\veps \tu_\veps v_\veps) + \Dx(p_\veps(\vrho_\veps)) = 0, \label{eqn:2d-x-mom-bal-stab} \\
        \Dt(\vrho_\veps v_\veps) + \Dx(\vrho_\veps \tv_\veps u_\veps) + \Dy(\vrho_\veps \tv_\veps v_\veps) + \Dy(p_\veps(\vrho_\veps)) = 0. \label{eqn:2d-y-mom-bal-stab}
    \end{gather}
\end{subequations}

As the existence of weak solutions to \eqref{eqn:2d-eul-sys} is unknown, we simply design a finite volume scheme in order to simulate the solutions without performing the subsequent analysis as done in the one-dimensional case. 

For the simplicity of exposition, let $\Omega = (a,b)\times (c,d)$. As before, let $0=t_0<t^1<\cdots<t^n<\cdots<t^N = T$ be a time discretization of $(0,T)$ with $\delt = t^{n+1} - t^n$ being the constant time-step. We consider a two-dimensional MAC discretization $(\M,\E)$ of $\Omega$, wherein $\M$ denotes the collection of all primal cells of the form $K_{i,j} = \lbrack x_{i-\half}, x_{i+\half}\rbrack\times\lbrack y_{j-\half}, y_{j+\half}\rbrack$ such that $\overline{\Omega} = \bigcup_{i,j} K_{i,j}$. Denote by $\E = \E^{(1)}\cup \E^{(2)}$, the collection of all edges $\sigma_{i+\half,j} = \lbrace x_{i+\half}\rbrace\times\lbrack y_{j-\half}, y_{j+\half}\rbrack\in\E^{(1)}$ and $\sigma_{i,j + \half} = \lbrack x_{i-\half}, x_{i+\half}\rbrack\times\lbrace y_{j+\half}\rbrace\in\E^{(2)}$. Note that $\E^{(k)}$ is the collection of all edges perpendicular to the $k$-th standard basis vector $\mathbf{e}^{(k)}$, $k = 1,2$. The cell center of $K_{i,j}$ is denoted by $(x_i, y_j)$, where $x_i = \half(x_{i-\half} + x_{i+\half})$ and $y_j = \half(y_{j-\half} + y_{j+\half})$. To each edge $\sigma_{i+\half,j}\in\E^{(1)}$, we associate a dual cell $D_{i+\half,j} = \lbrack x_{i}, x_{i+1}\rbrack\times\lbrack y_{j-\half}, y_{j+\half}\rbrack$ and analogously, for $\sigma_{i,j+\half}\in\E^{(2)}$, we set $D_{i,j+\half} = \lbrack x_{i-\half}, x_{i+\half}\rbrack\times \lbrack y_{j}, y_{j+1}\rbrack$. For the sake of simplicity, we suppose that the space steps $\delx = x_{i+\half} - x_{i-\half} = x_{i+1} - x_i$ and $\dely = y_{j+\half} - y_{j-\half} = y_{j+1} - y_j$ are constants. Because of the MAC discretization, the density and the pressure are approximated at the cell centres of the primal cells $K_{i,j}$, while the velocities are approximated at the centres of the dual cells. In particular, the $x$-component of the velocity, $u_\veps$, is approximated on the dual cells $D_{i+\half, j}$ associated to $\sigma_{i+\half,j}$ and the $y$-component of the velocity, $v_\veps$, is approximated on the dual cells $D_{i,j+\half}$ associated to $\sigma_{i,j+\half}\in\E^{(2)}$. The unknowns are thus the density $\vrho_{i,j}$, the $x$-component of the velocity $u_{i+\half,j}$, and the $y$-component of the velocity $v_{i, j+\half}$. We refer to Figure \ref{fig:2d-mac-disc} for a pictorial representation of the grid as well as the arrangement of the variables for a two-dimensional MAC discretization.

We initialize the scheme by setting 
\begin{gather*}
\label{eqn:schm-init-2d}
    \vrho^0_{i,j} = \frac{1}{\delx\dely}\int_{K_{i,j}}\vrho_\epso \,\dx\,\mathrm{d}y,\\
    u^0_{i+\half,j} = \frac{1}{\delx\dely}\int_{D_{i+\half,j}} u_\epso \,\dx\,\mathrm{d}y,
    \quad v^0_{i,j+\half} = \frac{1}{\delx\dely}\int_{D_{i,j+\half}}v_\epso\,\dx\,\mathrm{d}y.
\end{gather*}

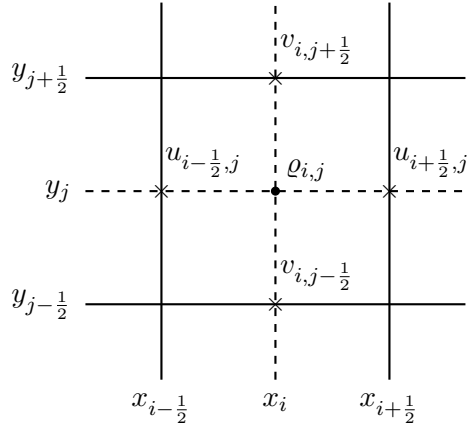
\begin{figure}
	\centering
	\begin{tikzpicture}[scale = 0.5]
		\draw [thick](1,0) -- (11,0);
		\draw [black,thick,style=dashed](1,3) -- (11,3);
		\draw [thick](1,6) -- (11,6);
		\draw [thick](3,-2) -- (3,8);
		\draw [black,thick,style=dashed](6,-2) -- (6,8);
		\draw [thick](9,-2) -- (9,8);
		\draw[black] (0.9,0)  node[anchor=east]{$y_{j-\half}$};
		\draw[black] (0.9,3)  node[anchor=east]{$y_j$};
		\draw[black] (0.9,6)  node[anchor=east]{$y_{j+\half}$};
		\draw[black] (3,-2.1)  node[anchor=north]{$x_{i-\half}$};
		\draw[black] (6,-2.1)  node[anchor=north]{$x_{i}$};
		\draw[black] (9,-2.1)  node[anchor=north]{$x_{i+\half}$};
		\draw[black] (3.5,-0.2) node[anchor=north]{} ;
		\draw[black] (9.8,-0.2) node[anchor=north]{} ;
		\draw[black] (3.8,5.8) node[anchor=north]{} ;
		\draw[black] (10.1,5.8) node[anchor=north]{} ;
		\draw[black] (6,6) node {\small$\times$};
		\draw[black] (7.1,6.1) node[anchor=south] {$v_{i,j+\half}$};
            \filldraw[black] (6,3) circle (3pt) node[anchor=south west]{$\varrho_{i,j}$};
		\draw[black] (6,0) node {\small$\times$};
		\draw[black] (6.9,0.1) node[anchor=south] {\hspace{0.2cm}$v_{i,j-\half}$};
		\draw[black] (3,3) node {\small$\times$};
		\draw[black] (3.9,3.1) node[anchor=south] {\hspace{0.2cm}$u_{i-\half,j}$};
		\draw[black] (9,3) node {\small$\times$};
		\draw[black] (10.1,3.1) node[anchor=south] {$u_{i+\half,j}$};
	\end{tikzpicture}
	\caption{Arrangement of the variables for a two-dimensional MAC discretization.}
        \label{fig:2d-mac-disc}
\end{figure}

As done in the one-dimensional case, we introduce a semi-implicit in time finite volume scheme to approximate \eqref{eqn:2d-eul-sys-stab}, and the scheme reads:
% \begin{subequations}
% \label{eqn:2d-scheme}
%     \begin{align}
%         \begin{split}
%             \frac{1}{\delt}(\vrho^{n+1}_{i,j} - \vrho^n_{i,j}) + \frac{1}{\delx}(&F^{n+1}_{i+\half, j} - F^{n+1}_{i-\half, j}) \\
%             &+ \frac{1}{\dely}(G^{n+1}_{i,j+\half} - G^{n+1}_{i,j-\half}) = 0, \label{eqn:2d-disc-mss-bal}
%         \end{split}
%         \\
%         \begin{split}
%             \frac{1}{\delt}(\vrho^{n+1}_{i+\half,j} u^{n+1}_{i+\half,j} - &\vrho^{n}_{i+\half,j}u^n_{i+\half, j}) + \frac{1}{\delx}(F^{n+1}_{i+1,j}u^n_{i+1,j} - F^{n+1}_{i,j}u^n_{i,j}) \\
%             &+\frac{1}{\dely}(G^{n+1}_{i+\half,j+\half}u^n_{i+\half,j+\half} - G^{n+1}_{i+\half,j-\half}u^n_{i+\half,j-\half}) +\eth_x p^{n+1}_{i+\half, j} = 0, \label{eqn:2d-disc-x-mom-bal}
%         \end{split} 
%         \\
%         \begin{split}
%             \frac{1}{\delt}(\vrho^{n+1}_{i,j+\half} v^{n+1}_{i,j+\half} - &\vrho^{n}_{i,j+\half}v^n_{i,j+\half}) +\frac{1}{\dely}(G^{n+1}_{i,j+1}v^n_{i,j+1} - G^{n+1}_{i,j}v^n_{i,j}) \\
%             &+\frac{1}{\delx}(F^{n+1}_{i+\half,j+\half}v^n_{i+\half,j+\half} - F^{n+1}_{i-\half,j+\half}v^n_{i-\half,j+\half}) +\eth_y p^{n+1}_{i, j+\half} = 0. 
%             \label{eqn:2d-disc-y-mom-bal}
%         \end{split}
%     \end{align}
% \end{subequations}

\begin{subequations}
\label{eqn:2d-scheme}
    \begin{gather}
            \frac{1}{\delt}(\vrho^{n+1}_{i,j} - \vrho^n_{i,j}) + \frac{1}{\delx}(F^{n+1}_{i+\half, j} - F^{n+1}_{i-\half, j})
            + \frac{1}{\dely}(G^{n+1}_{i,j+\half} - G^{n+1}_{i,j-\half}) = 0, \label{eqn:2d-disc-mss-bal} \\
            \frac{1}{\delt}(\vrho^{n+1}_{i+\half,j} u^{n+1}_{i+\half,j} - \vrho^{n}_{i+\half,j}u^n_{i+\half, j}) + \frac{1}{\delx}(F^{n+1}_{i+1,j}u^n_{i+1,j} - F^{n+1}_{i,j}u^n_{i,j}) \nonumber \\
            +\frac{1}{\dely}(G^{n+1}_{i+\half,j+\half}u^n_{i+\half,j+\half} - G^{n+1}_{i+\half,j-\half}u^n_{i+\half,j-\half}) +\eth_x p^{n+1}_{i+\half, j} = 0, \label{eqn:2d-disc-x-mom-bal} \\
            \frac{1}{\delt}(\vrho^{n+1}_{i,j+\half} v^{n+1}_{i,j+\half} - \vrho^{n}_{i,j+\half}v^n_{i,j+\half}) +\frac{1}{\delx}(F^{n+1}_{i+\half,j+\half}v^n_{i+\half,j+\half} - F^{n+1}_{i-\half,j+\half}v^n_{i-\half,j+\half})\nonumber \\
            \frac{1}{\dely}(G^{n+1}_{i,j+1}v^n_{i,j+1} - G^{n+1}_{i,j}v^n_{i,j}) +\eth_y p^{n+1}_{i, j+\half} = 0. 
            \label{eqn:2d-disc-y-mom-bal}
    \end{gather}
\end{subequations}

The mass fluxes are defined as 
\begin{subequations}
\label{eqn:2d-mss-flux}
    \begin{align}
        &F^{n+1}_{i+\half,j} = \vrho^{n+1}_{i,j}\tu^{n+1,+}_{i+\half,j} + \vrho^{n+1}_{i+1,j}\tu^{n+1,-}_{i+\half,j}, \\
        &G^{n+1}_{i,j+\half} = \vrho^{n+1}_{i,j}\tv^{n+1,+}_{i,j+\half} + \vrho^{n+1}_{i,j+1}\tv^{n+1,-}_{i,j+\half},
    \end{align}
\end{subequations}
where $\tu^{n+1}_{i+\half,j} = u^n_{i+\half,j} - \du^{n+1}_{i+\half,j}$, $\tv^{n+1}_{i,j+\half} = v^n_{i,j+\half} - \delta v^{n+1}_{i,j+\half}$. Analogous to the one-dimensional case, we set 
\begin{equation}
    \du^{n+1}_{i+\half,j} = \eta\delt\eth_x p^{n+1}_{i+\half,j}, \quad \delta v^{n+1}_{i,j+\half} = \eta\delt\eth_y p^{n+1}_{i,j+\half}.
\end{equation}

The positive and negative halves of the shifted velocities appearing in the mass flux are defined as 
\begin{subequations}
\label{eqn:vel-split-2d}
    \begin{align}
        &\tu^{n+1,+}_{i+\half,j} = \max\lbrace u^n_{i+\half,j}, 0\rbrace - \min\lbrace\du^{n+1}_{i+\half,j},0\rbrace, \quad
        \tu^{n+1,-}_{i+\half,j} = \min\lbrace u^n_{i+\half,j}, 0\rbrace - \max\lbrace\du^{n+1}_{i+\half,j},0\rbrace, \\
        &\tv^{n+1,+}_{i,j+\half} = \max\lbrace v^n_{i,j+\half}, 0\rbrace - \min\lbrace\delta v^{n+1}_{i,j+\half},0\rbrace, \quad
        \tv^{n+1,-}_{i,j+\half} = \min\lbrace v^n_{i,j+\half}, 0\rbrace - \max\lbrace\delta v^{n+1}_{i,j+\half},0\rbrace.
    \end{align}    
\end{subequations}

The fluxes and the upwind velocities appearing in the $x$-momentum balance \eqref{eqn:2d-disc-x-mom-bal} are defined as 
\begin{subequations}
\begin{align}
    &F^{n+1}_{i,j} = \half(F^{n+1}_{i+\half,j} + F^{n+1}_{i-\half,j}), \quad G^{n+1}_{i+\half,j+\half} = \half(G^{n+1}_{i,j+\half} + G^{n+1}_{i+1,j+\half}), \\
    &u^n_{i,j} = 
    \begin{dcases}
        u^n_{i+\half,j},& \text{if }F^{n+1}_{i,j}\geq 0, \\
        u^n_{i-\half,j},& \text{otherwise}.
    \end{dcases}
    ,\quad u^n_{i+\half,j+\half} = 
    \begin{dcases}
        u^n_{i+\half,j},& \text{if }G^{n+1}_{i+\half,j+\half}\geq 0, \\
        u^n_{i+\half,j+1},& \text{otherwise}.
    \end{dcases}
\end{align}
\end{subequations}

Analogously, the fluxes and upwind velocities appearing the the $y$-momentum balance \eqref{eqn:2d-disc-y-mom-bal} are defined as
\begin{subequations}
\begin{align}
    &G^{n+1}_{i,j} = \half(G^{n+1}_{i,j+\half} + G^{n+1}_{i,j-\half}), \quad F^{n+1}_{i+\half,j+\half} = \half(F^{n+1}_{i+\half,j} + F^{n+1}_{i+\half,j+1}), \\
    &v^n_{i,j} = 
    \begin{dcases}
        v^n_{i,j+\half},& \text{if }G^{n+1}_{i,j}\geq 0, \\
        v^n_{i,j-\half},& \text{otherwise}.
    \end{dcases}
    ,\quad v^n_{i+\half,j+\half} = 
    \begin{dcases}
        v^n_{i,j+\half},& \text{if }F^{n+1}_{i+\half,j+\half}\geq 0, \\
        v^n_{i+1,j+\half},& \text{otherwise}.
    \end{dcases}
\end{align}
\end{subequations}

The interface densities $\vrho^{n}_{i+\half,j},\vrho^{n}_{i,j+\half} $ appearing in \eqref{eqn:2d-disc-x-mom-bal} and \eqref{eqn:2d-disc-y-mom-bal} respectively are defined as 
\begin{equation}
    \vrho^n_{i+\half,j} = \half(\vrho^n_{i,j} + \vrho^n_{i+1,j}),\quad \vrho^n_{i,j+\half} = \half (\vrho^n_{i,j} + \vrho^n_{i,j+1}).
\end{equation}

Finally, the discrete pressure derivative terms read
\begin{equation}
    \eth_x p^{n+1}_{i+\half,j} = \frac{1}{\delx}(p^{n+1}_{i+1,j} - p^{n+1}_{i,j}),\quad \eth_y p^{n+1}_{i, j+\half} = \frac{1}{\dely}(p^{n+1}_{i,j+1} - p^{n+1}_{i,j}).
\end{equation}

\begin{remark}
    In order to make sure that the scheme is stable, we choose the time step $\delt$ and the parameter $\eta$ for the two-dimensional scheme in accordance with the choices presented in \cite{AGK23}.
\end{remark}

\section{Numerical Results}
\label{sec:num-res}

In the experiments that follow, we only consider periodic boundary conditions and set $\gamma = 2$, cf.\ \eqref{eqn:cong-pres}, for each case study. Further, we let $q=\vrho u$ denote the momentum in one-dimension and let $q^x = \vrho u$, $q^y = \vrho v$ denote the $x,y$-components of the momentum in two-dimensions. For one-dimensional case studies, $M$ will denote the number of mesh points and for two-dimensional problems, we consider an $M_x\times M_y$ grid, where $M_x$ denotes the number of cells in the $x$-direction and $M_y$ denotes the number of cells in the $y-$direction. In addition, we consider the mesh to be uniform.

\subsection*{Example 1}
\label{subsec:eg-1}
We consider the following Riemann problem from \cite{DHN11} on the domain $\lbrack 0,1\rbrack$:
\begin{equation}
\label{eqn:eg-1}
    (\vrho_\epso(x), q_\epso(x)) = 
    \begin{dcases}
    (0.7, 0.8),& \text{if }x\in\lbrack 0,0.5), \\
    (0.7,-0.8),& \text{if }x\in(0.5,1\rbrack. 
    \end{dcases} 
\end{equation}
The main aim of this experiment is to showcase the ability of the scheme to preserve the maximal density bound and capture the formation of congestion. The exact solution emanating from the given initial data corresponds to two symmetric shocks moving in opposite directions with a congested intermediate state. We set $M = 200$ and run the simulation for a final time of $t = 0.05$ and present the results in Figure \ref{fig:p1} for $\veps = 10^{-2}, 10^{-4}$. As we can see, the scheme is able to capture the propagation of shockwave quite accurately, given the correspondence with the exact solution in both the cases of $\veps = 10^{-2}, 10^{-4}$.

\begin{figure}[htpb]
    \centering
    \begin{subfigure}{0.3\textheight}
        \includegraphics[width = \textwidth]{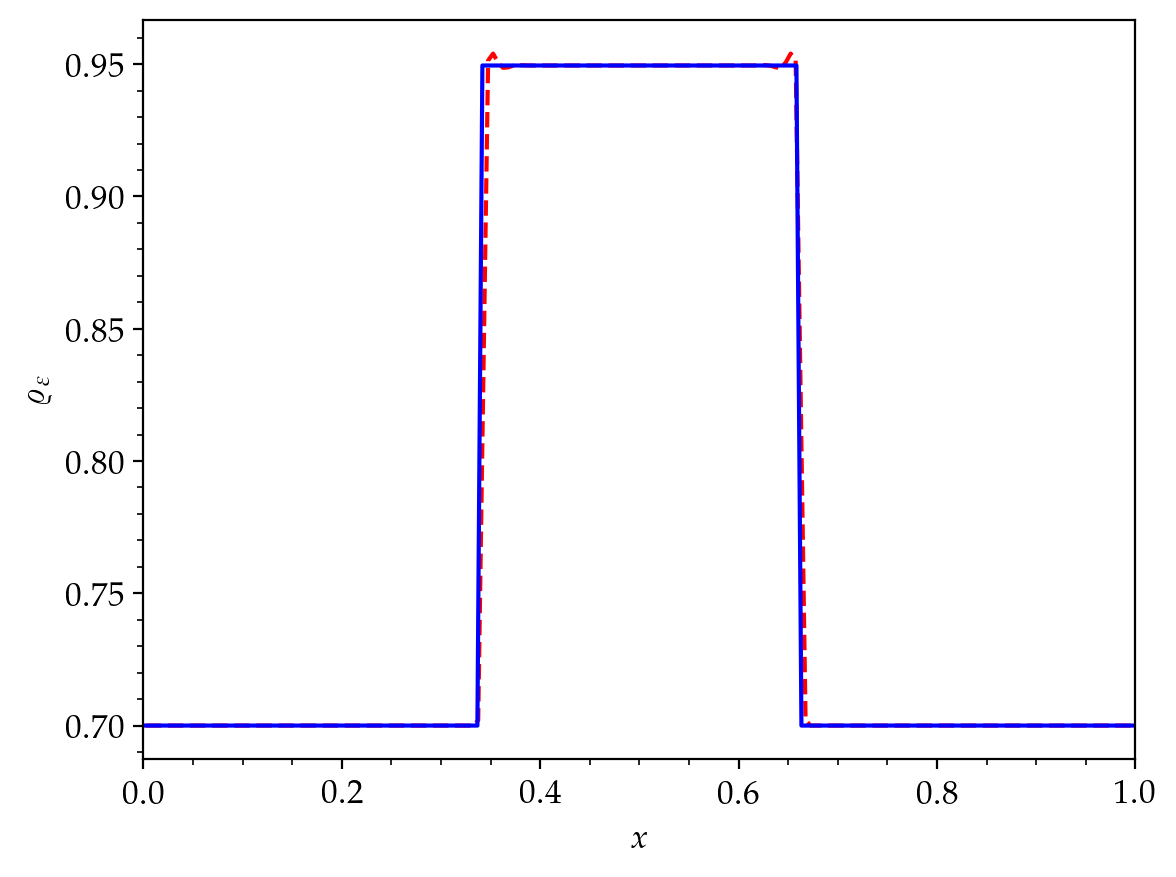}
        \includegraphics[width = \textwidth]{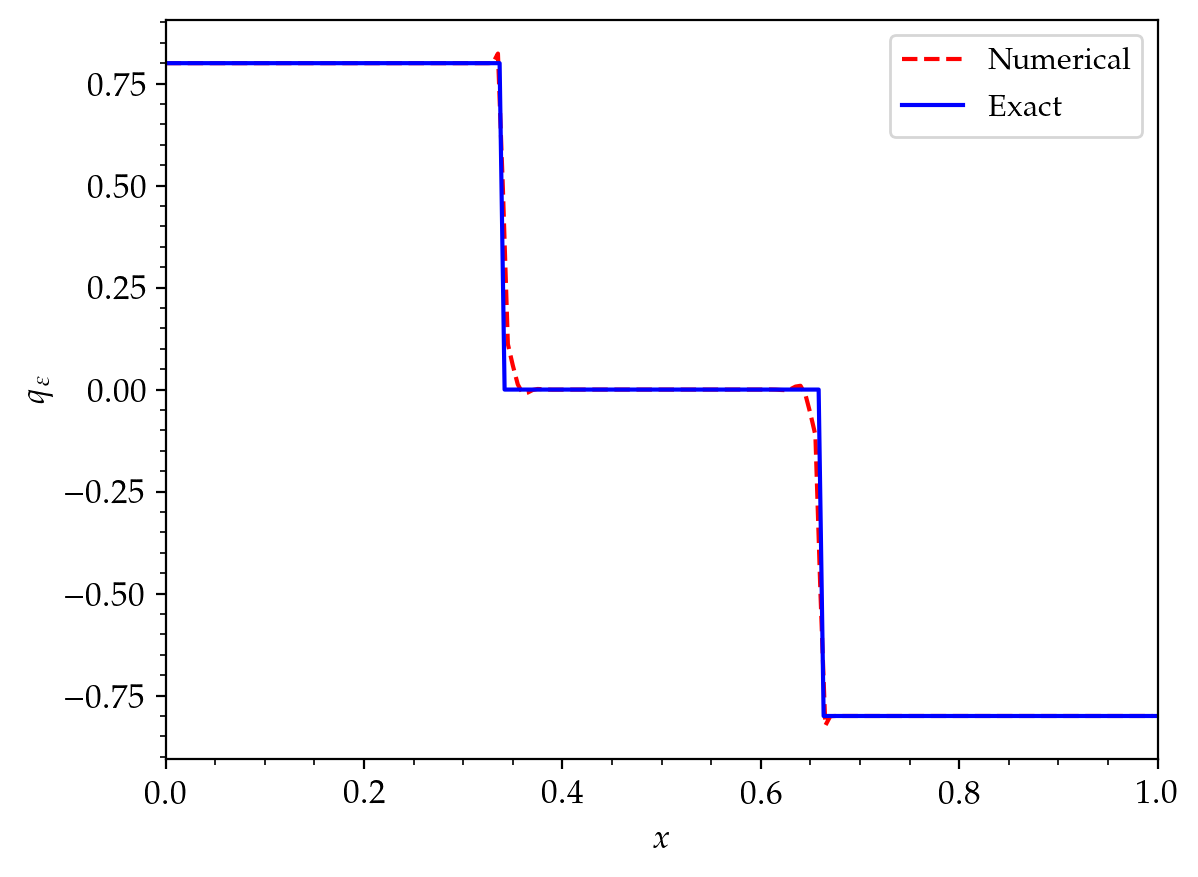}
        \caption{$\veps = 10^{-2}$}
        \label{subfig:p1-eps-2}    
    \end{subfigure}
    \begin{subfigure}{0.3\textheight}
        \includegraphics[width = \textwidth]{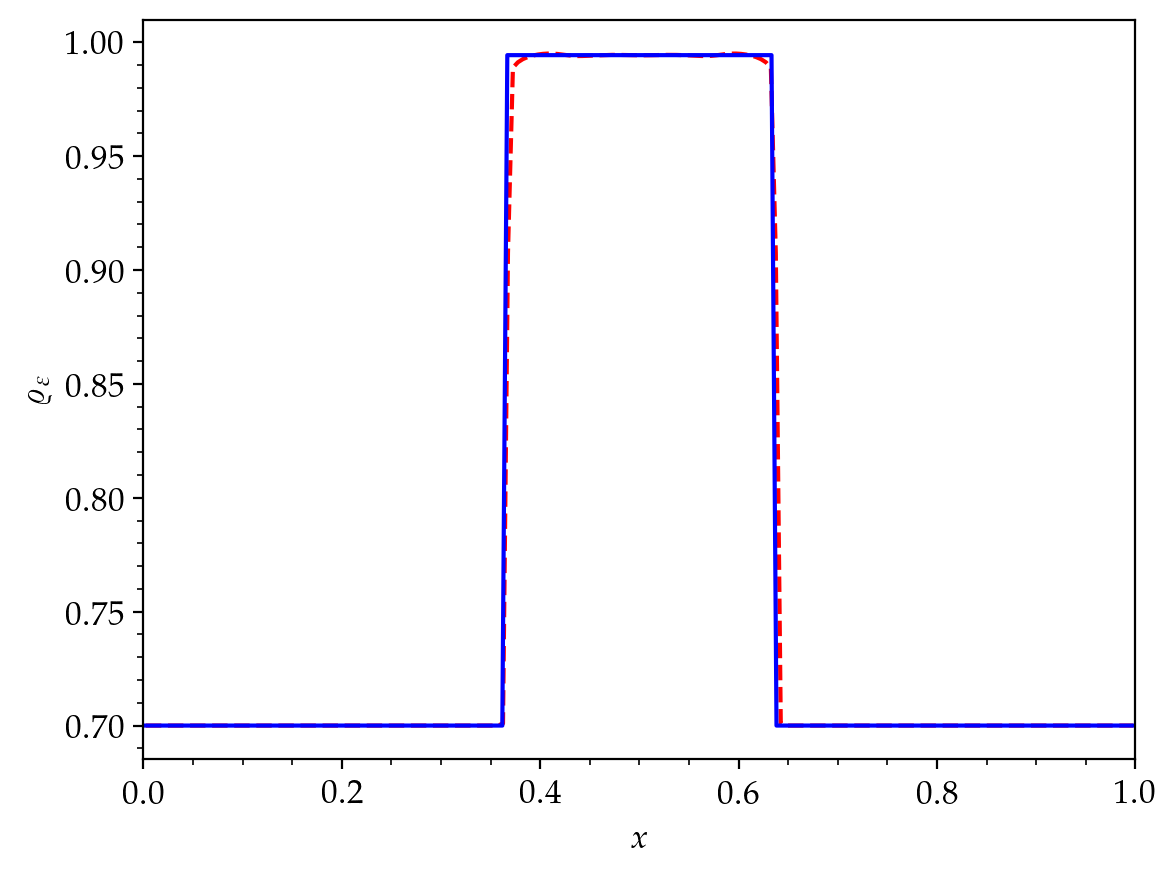}
        \includegraphics[width = \textwidth]{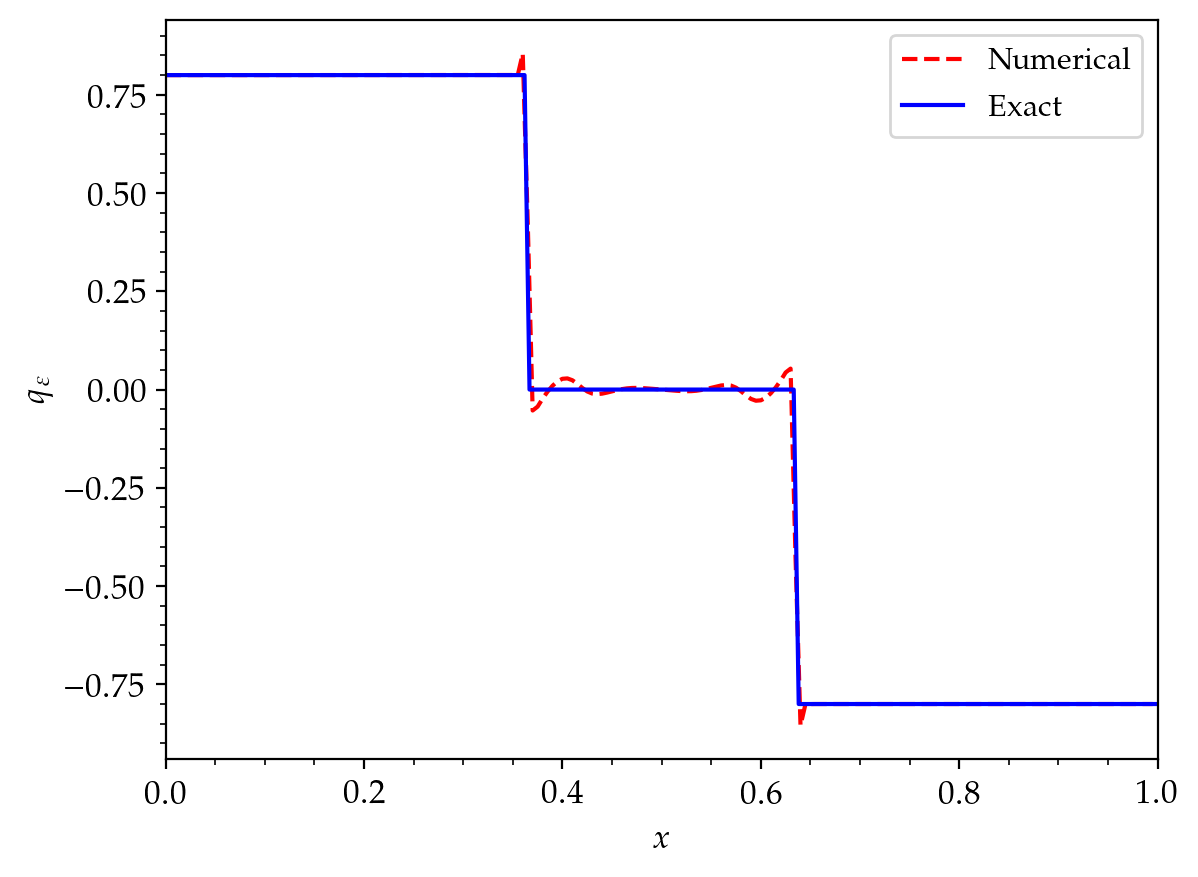}
        \caption{$\veps = 10^{-4}$}
        \label{subfig:p1-eps-4}    
    \end{subfigure}
    \caption{Comparison of the numerical solutions versus the exact solutions with initial data \eqref{eqn:eg-1} for different values of $\veps$. Density (top) and momentum (bottom).}
    \label{fig:p1}
\end{figure}

\subsection*{Example 2}
\label{subsec:eg-2}
We once again consider a Riemann problem from \cite{DHN11} on the domain $\lbrack 0,1\rbrack$. The goal of this problem is to illustrate the positivity preserving property of the present scheme. The initial data reads:
\begin{equation}
\label{eqn:eg-2}
    (\vrho_\epso(x), q_\epso(x)) = 
    \begin{dcases}
    (0.7, -0.8),& \text{if }x\in\lbrack 0,0.5), \\
    (0.7, 0.8),& \text{if }x\in(0.5,1\rbrack. 
    \end{dcases} 
\end{equation}
The exact solution emanating from the given initial data corresponds to two symmetric rarefaction waves moving away from each other, which gives rise to the formation of a vacuum state in between. We set $M = 200$ and run the simulation for a final time of $t = 0.05$ and present the results in Figure \ref{fig:p2} for $\veps = 10^{-4}$. As we can see, the vacuum state is approximated quite well and further, we did not observe any instability during the computations, thus verifying that the scheme is indeed positivity preserving.

\begin{figure}[htpb]
    \centering
    \includegraphics[width = 0.4\textwidth]{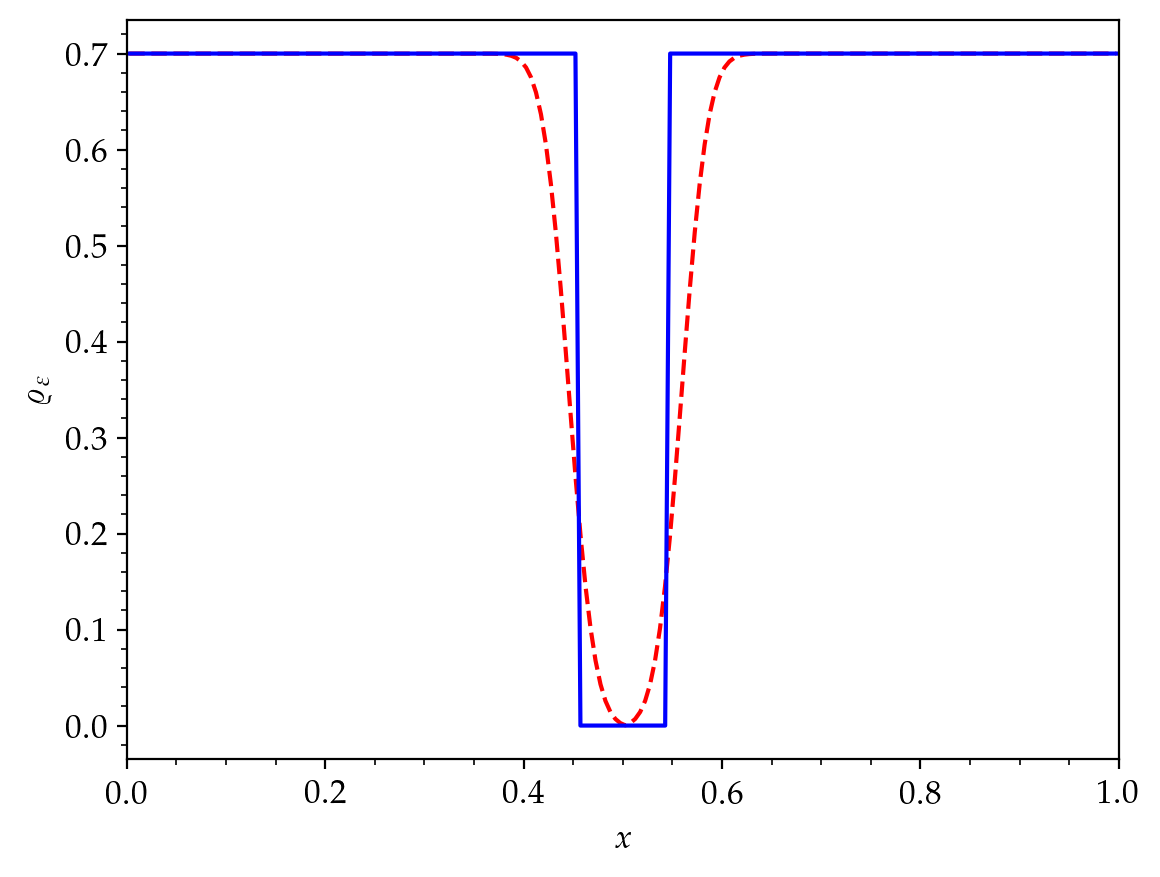}
    \includegraphics[width = 0.4\textwidth]{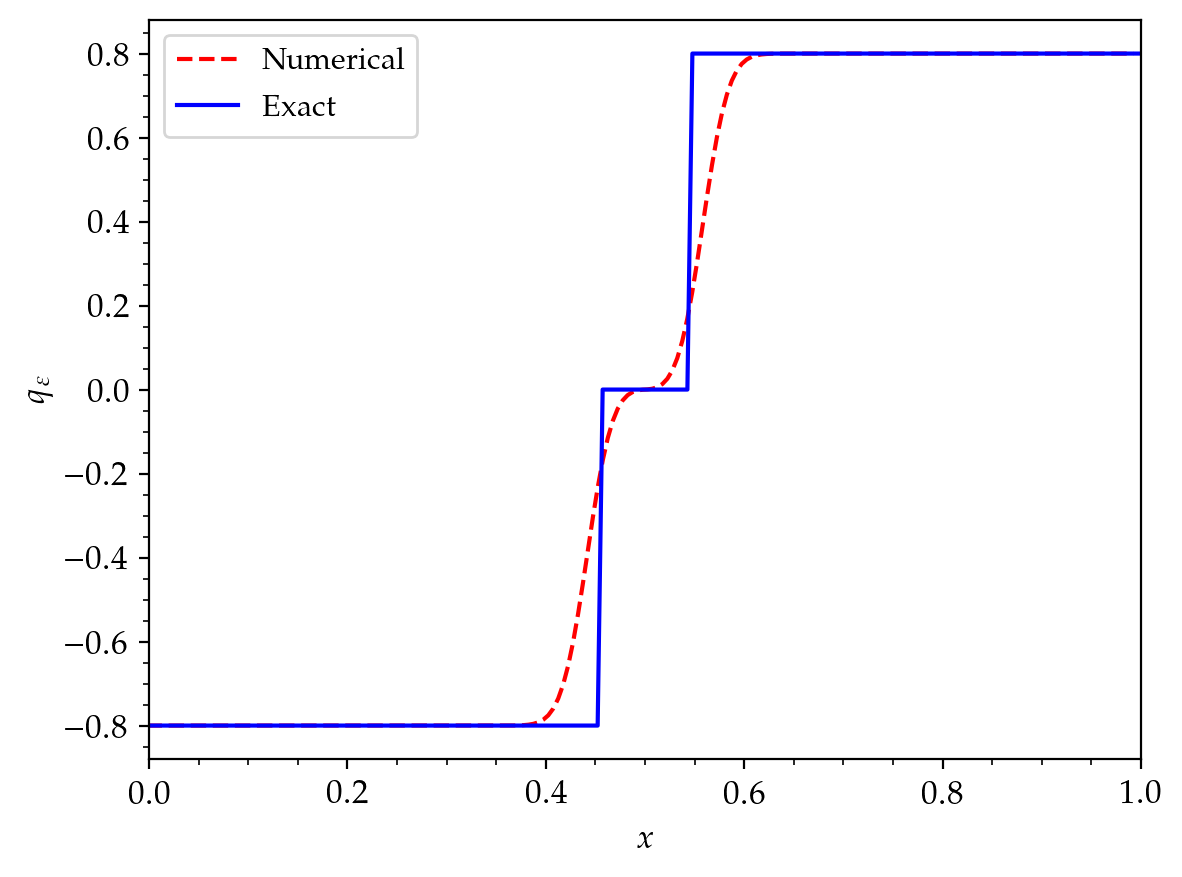}
    \label{fig:p2-eps-4}    
    \caption{Comparison of the numerical solutions versus the exact solutions with initial data \eqref{eqn:eg-2} for $\veps = 10^{-4}$. Density (left) and momentum (right).}
    \label{fig:p2}
\end{figure}

\subsection*{Example 3}
\label{subsec:eg-3}

We consider the following Riemann initial data from \cite{YWS13} for the presureless Euler system, for which the exact solution is available. The goal of this experiment is to show that as $\veps\to 0$, the numerical solutions converge to the exact solution of the presureless system and thus, verifying the AP property of the scheme. Further, we also want to ensure that the scheme is able to maintain positivity of density in the limit $\veps\to 0$.

We consider the domain $\lbrack -0.5, 0.5\rbrack$ and the initial data reads 
\begin{equation}
\label{eqn:eg-3}
    \vrho_\epso(x) = 0.5,\quad u_\epso(x) = \begin{dcases}
        -0.5,& \text{if }x<0, \\
        0.4,& \text{if }x>0.
    \end{dcases}
\end{equation}

The exact solution for the presureless system emanating from the above initial data is given by 
\begin{equation}
\label{eqn:ex-eg-3}
    (\vrho(t,x), u(t,x)) = \begin{dcases}
        (0.5, -0.5),& \text{if }x < -0.5t, \\
        (0,\text{undefined}),& \text{if } -0.5t < x < 0.4t, \\
        (0.5, 0.4),& \text{if }x > 0.4t.
    \end{dcases}
\end{equation}

We set $t = 0.2$, $M = 200$ and consider $\veps = 10^{-k}, k = 4,5,6,7$. From Figure \ref{fig:pless-posi}, we can observe that the scheme is able to approximate the vacuum state even when $\veps\sim 0$, given the correspondence of the numerical solutions versus the exact solution. Furthermore, the pressure profiles presented in Figure \ref{fig:pless-pres} allow us to conclude that the numerical solutions are indeed converging towards a solution of the pressureless Euler system.

\begin{figure}[htpb]
    \centering
    \includegraphics[width = 0.4\textwidth]{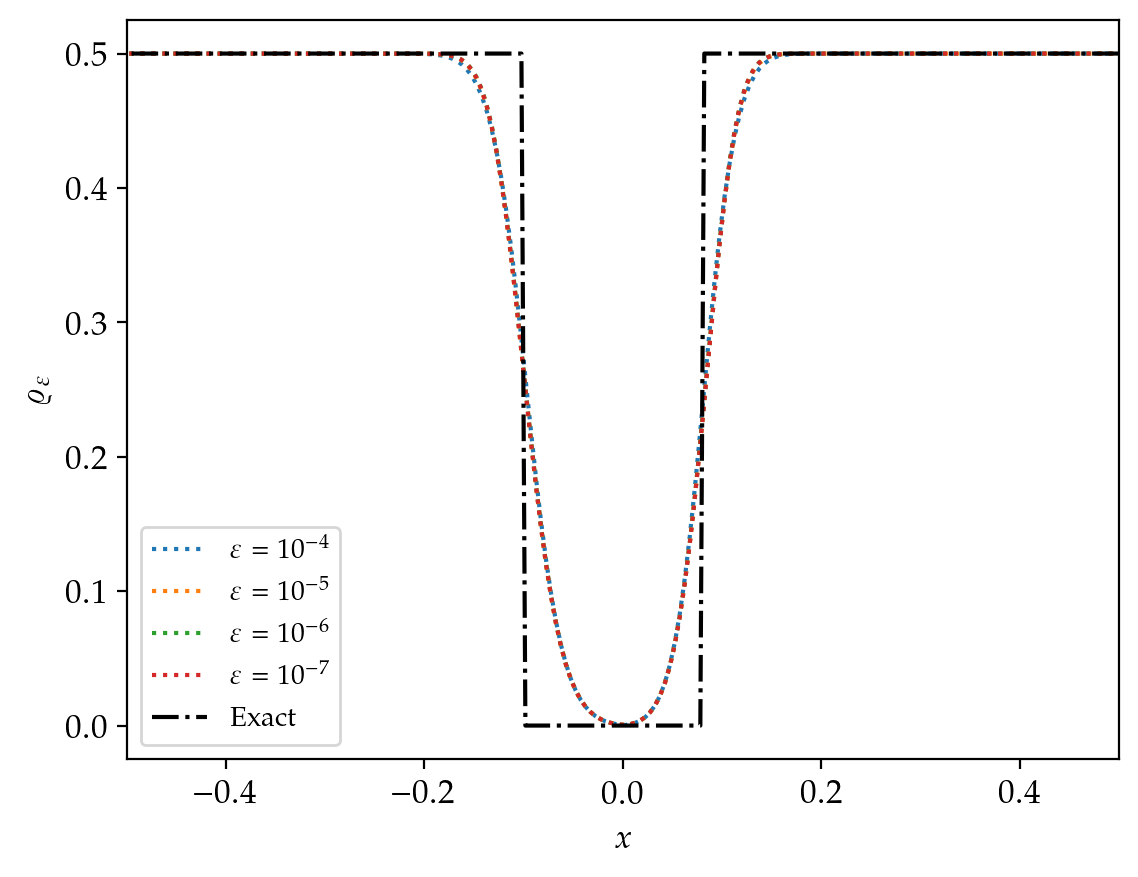}
    \includegraphics[width = 0.4\textwidth]{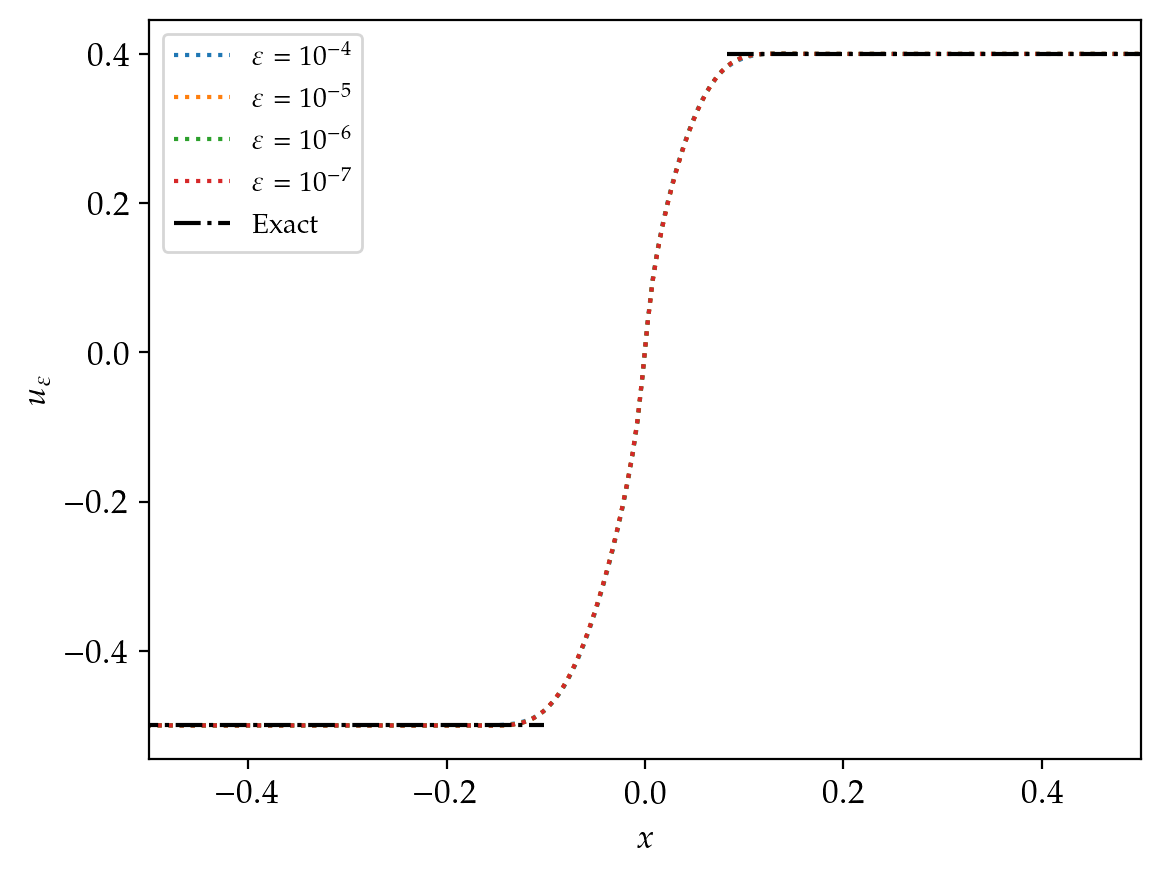}
    \caption{Example 3: density(left) and velocity(right) for different values of $\veps$.}
    \label{fig:pless-posi}
\end{figure}

\begin{figure}[htpb]
    \centering
    \includegraphics[width = 0.5\textwidth]{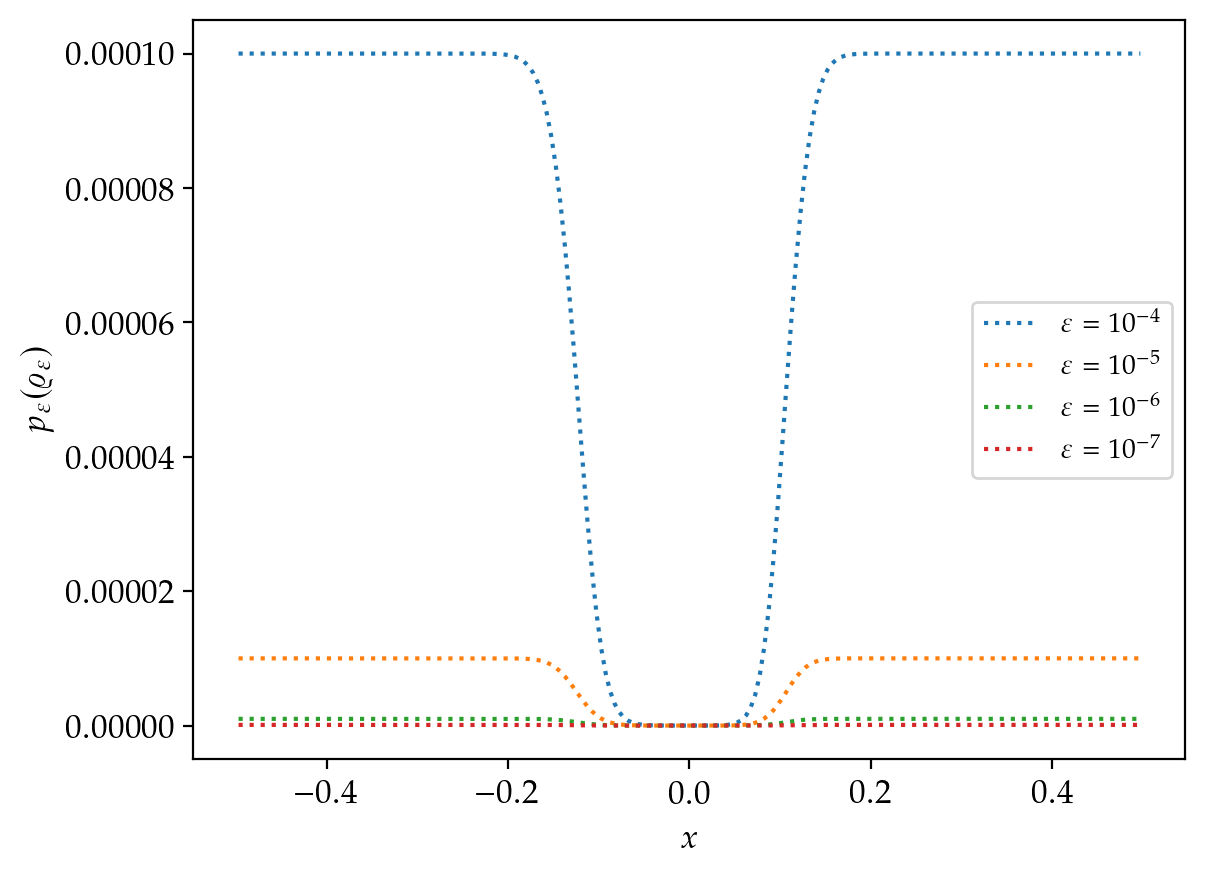}
    \caption{Example 3: the pressure $p_\veps(\vrho_\veps)$ for different values of $\veps$.}
    \label{fig:pless-pres}
\end{figure}

\subsection*{Example 4}
\label{subsec:eg-4}

We consider the following initial data on $\lbrack -1,1\rbrack$ from \cite{YWS13} for the presureless Euler system. The initial data is as follows.
\begin{equation}
\label{eqn:eg-4}
    \vrho_\epso(x) = 0.5,\quad
    u_\epso(x) = \begin{dcases}
        -0.5,& \text{if } x<-0.5, \\
        0.4,& \text{if }-0.5<x<0, \\
        0.4-x,& \text{if }0<x<0.8, \\
        -0.4,& \text{if }x>0.8.
    \end{dcases}
\end{equation}
The above initial data gives rise to a solution of the presureless system that involves the formation of vacuum, discontinuities as well as the formation of congested regions where density is maximal. As such, the goal of this problem is two-fold, first to ensure that the scheme is able to capture such complex configurations involving vacuum, discontinuities and congestion, and further ensure that the scheme is able to approximate the solutions appropriately as $\veps\to 0$ in order to verify the AP property of the scheme.

The explicit formula for the exact solution corresponding to the above initial data \eqref{eqn:eg-4} for $t < 1$ is given by 
\begin{equation}
\label{eqn:ex-eg-4}
        (\vrho(t,x), u(t,x)) = 
        \begin{dcases}
            (0.5, -0.5),& \text{if }x < -0.5-0.5t, \\
            (0,\text{undefined}),& \text{if }-0.5-0.5t < x <-0.5+0.4t, \\
            (0.5, 0.4),& \text{if }-0.5+0.4t < x < 0.4t, \\
            \biggl(\frac{0.5}{1-t},\frac{0.4-x}{1-t}\biggr),& \text{if } 0.4t < x < 0.8-0.4t, \\
            (0.5, -0.4),& \text{if }x > 0.8-0.4t.
        \end{dcases}
\end{equation}

If $t<0.5$, $\vrho(t,x)<1$ for any $x\in\lbrack -1,1\rbrack$. Hence, we run our computations for a final time of $t = 0.49$ and consider $\veps = 10^{-k}, k = 4,5,6,7$ and $M = 200$. From Figure \ref{fig:yang-den-vel}, we can clearly see that as $\veps$ decreases, we observe an extremely good correspondence between the numerical density and velocity and their respective exact solutions. In addition, we see that the positivity of the density is still maintained despite $\veps\sim 0$, showcasing the schemes ability to maintain positivity of density even in extreme conditions. From Figure \ref{fig:pres_yang}, we note that the pressure $p_\veps(\vrho_\veps)$ is only activated in the region where the density is nearly 1. Furthermore, it is only of order $10^{-3}$ and is decreasing to 0 as $\veps$ is decreasing, which clearly indicates the convergence of the numerical solutions to the solution of the presureless system.
 
\begin{figure}[htpb]
    \centering
    \includegraphics[width = 0.4\textwidth]{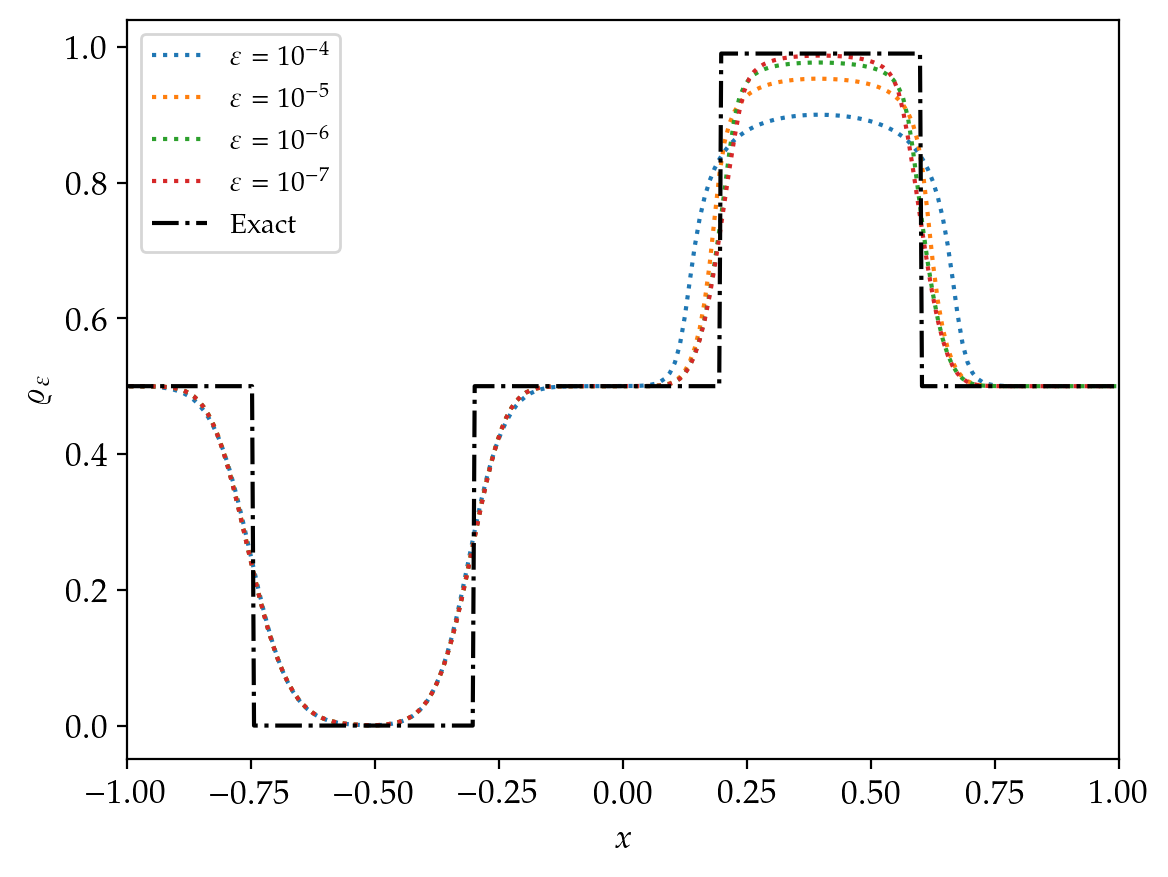}
    \includegraphics[width = 0.4\textwidth]{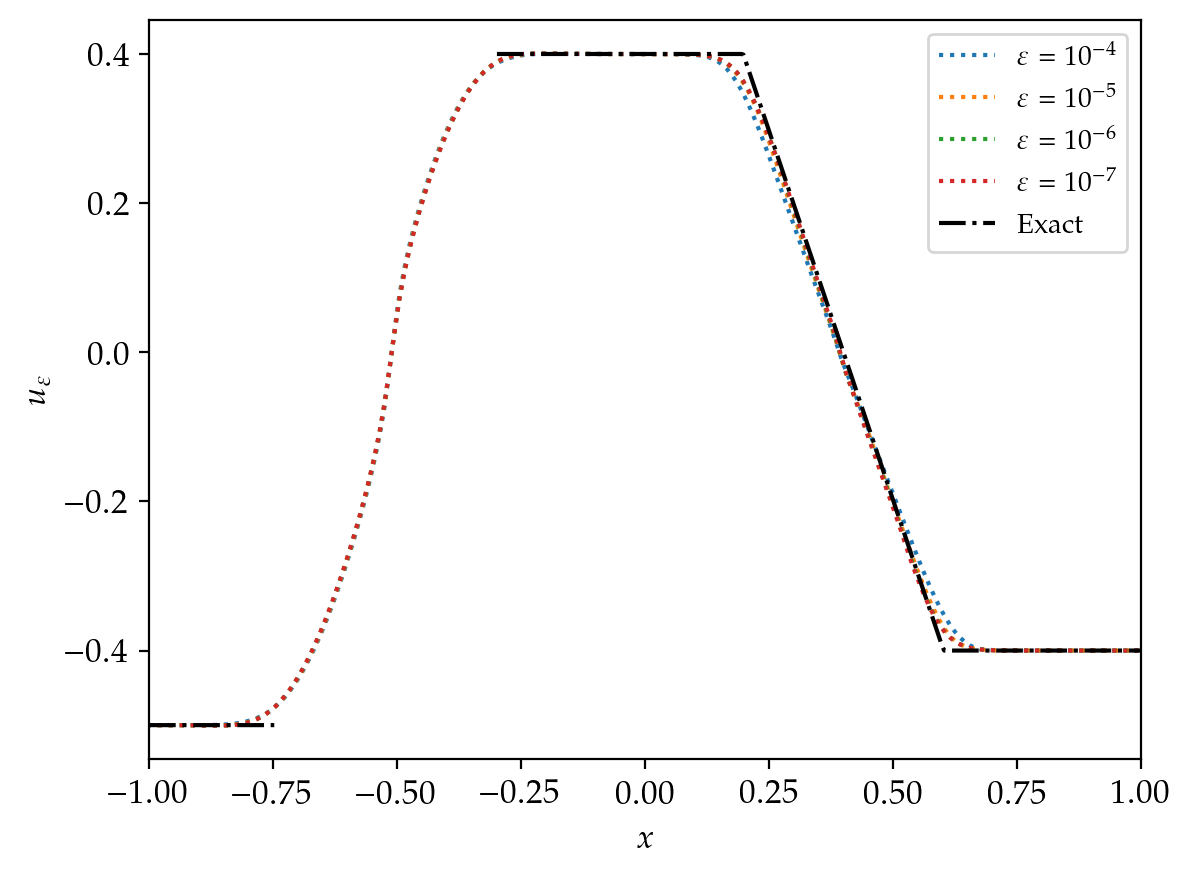}
    \caption{Example 4: density(left) and velocity(right) for different values of $\veps$.}
    \label{fig:yang-den-vel}
\end{figure}

\begin{figure}[htpb]
    \centering
    \includegraphics[width = 0.5\textwidth]{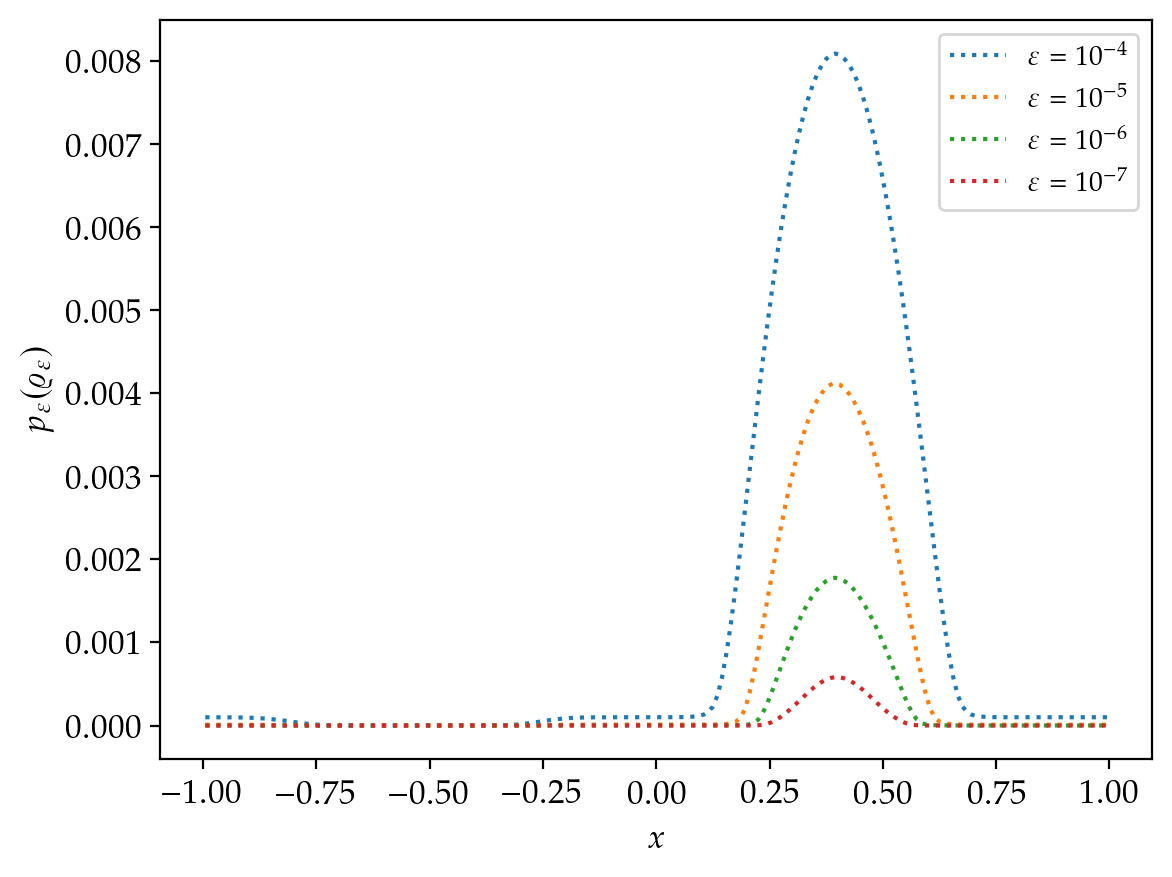}
    \caption{Example 4: the pressure $p_\veps(\vrho_\veps)$ for different values of $\veps$}
    \label{fig:pres_yang}
\end{figure}

\subsection*{Example 5}
\label{subsec:eg-5}

We consider the following two-dimensional test case from \cite{DHN11} which illustrates the collision of two congested domains. We suppose that $\Omega = \lbrack 0,1\rbrack\times\lbrack 0,1\rbrack$. The initial data is as follows:
\begin{align}
\label{eqn:eg-5}
    &\vrho_\epso(x,y) = 0.8\X_{A\cup B}(x,y) + 0.6\X_{\Omega\setminus(A\cup B)}(x,y),\\
    &q^x_\epso(x,y) = \X_A(x,y) - \X_B(x,y),\quad q^y_\epso(x,y) = 0.  
\end{align}
Here, $A,B\subseteq\Omega$ are the initial regions of congestion given by 
\[
A = \biggl\lbrack\frac{1}{6},\frac{5}{12}\biggr\rbrack\times\biggl\lbrack\frac{1}{3},\frac{7}{12}\biggr\rbrack,\quad B = \biggl\lbrack\frac{7}{12},\frac{5}{6}\biggr\rbrack\times\biggl\lbrack\frac{5}{12},\frac{2}{3}\biggr\rbrack
\]
and $\X_{C}$ denotes the indicator function of $C\subseteq\Omega$. 

We choose $\veps = 10^{-4}$ and set $M_x = M_y = 200$. In Figure \ref{fig:den-2d-cong}, we present the pseudocolor plots of the density that have the momentum vector fields superimposed onto them for different times. The collision of the moving domains causes the formation of a congested region as observed when $t = 0.05$ and $t = 0.1$. Further, the collision also causes the formation of two oppositely moving corridors, which can be clearly seen because of the superimposed quiver plots of the momentum at $t = 0.2$.
% \begin{figure}[htpb]
%     \centering
%     \includegraphics[width = 0.4\textwidth]{}
%     \includegraphics[width = 0.4\textwidth]{}
%     \includegraphics[width = 0.4\textwidth]{}
%     \includegraphics[width = 0.4\textwidth]{}
%     \caption{Example 5: momentum vector fields superimposed on top of the density pseudocolor plots for different times}
%     \label{fig:den-2d-cong}
% \end{figure}

\begin{figure}[htpb]
    \centering
    \includegraphics[width = 0.9\textwidth]{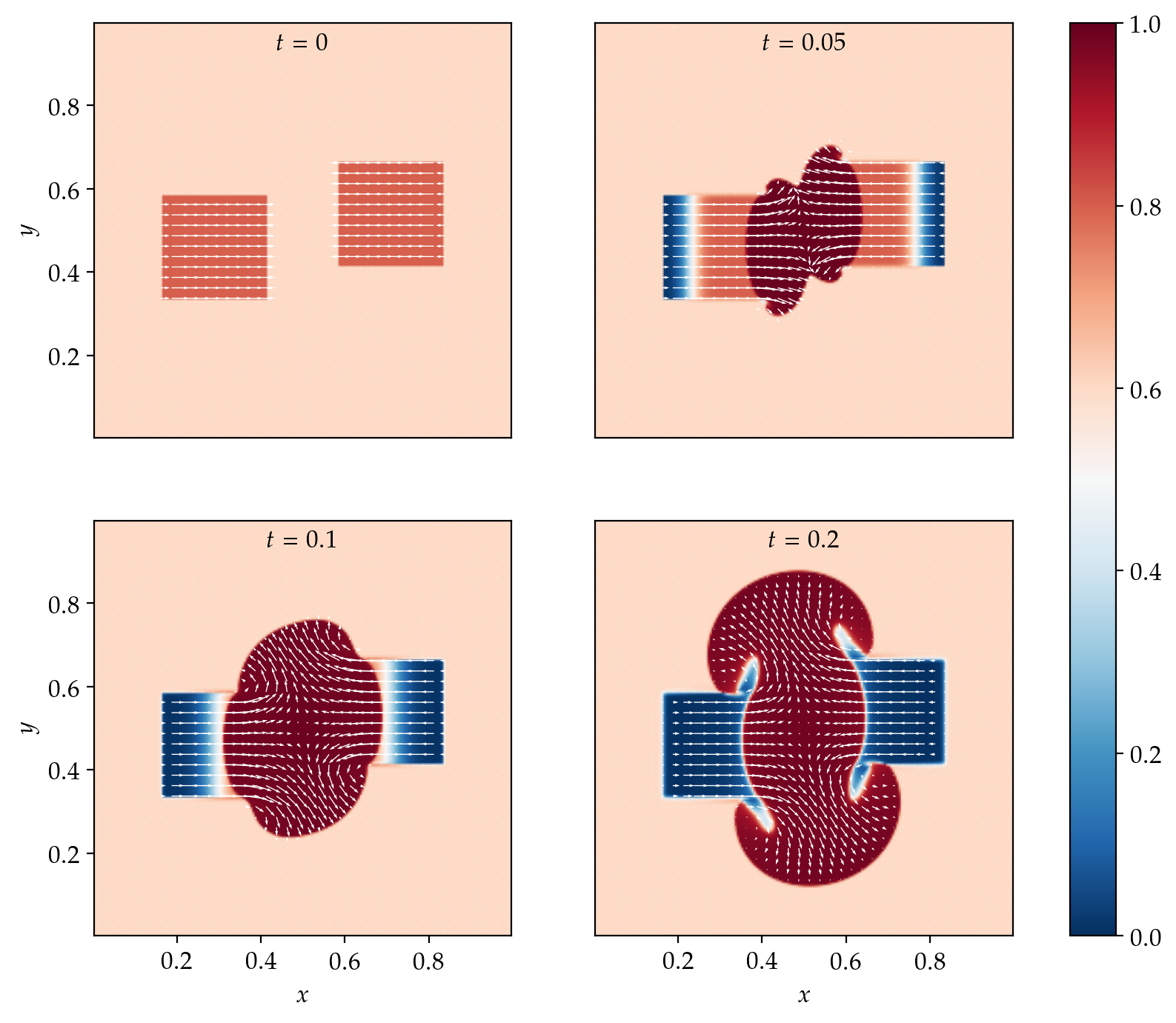}
    \caption{Example 5: momentum vector fields superimposed on top of the density pseudocolor plots for different times}
    \label{fig:den-2d-cong}
\end{figure}

\subsection*{Example 6}

We consider the following two-dimensional problem from \cite{MP17} for the presureless Euler system which illustrates the formation of a vacuum corridor. The domain is $\Omega = \lbrack -40, 40\rbrack\times\lbrack -40,40\rbrack$. Let $A = \lbrace (x,y)\in\Omega\colon x^2 + y^2 < 25\rbrace$ and $B = \lbrace(x,y)\in\Omega\colon (x+31)^2 + y^2 < 5^2\rbrace$. The initial data is as follows:
\begin{align}
    &\vrho_\epso(x) = 0.2\X_A(x) + 0.2\X_B(x) + 0.8\X_{\Omega\setminus (A\cup B)}(x) \label{eqn:eg-6-den},\\
    &q^x_\epso(x) = 2\X_B(x), \quad q^y_\epso(x) = 0. \label{eqn:eg-6-mom}
\end{align}
The setup is as follows, the smaller cloud of mass on the left, i.e.\ the mass in the region $B$, is given a rightward velocity while the larger mass occupying region $A$ is at rest initially. We set $\veps = 10^{-6}$, and let $M_x = M_y = 200$. We perform a long-time simulation and present the pseudocolor plots of the density at six different times, namely $t = 0,1,2,5,10,12$, in Figure \ref{fig:den-circ-col}. At $t = 1$, we observe the two masses colliding has lead to the formation of a congested region right where the impact has occurred. As time increases, we can clearly see that the propagation of this congested region (given in dark brown color in the pseudocolor plots) causes the formation of a vacuum corridor (given in white color in the pseudocolor plots), which in turn causes congested regions to develop along the boundary of said corridor. 

\begin{figure}[htpb]
    \centering
    \includegraphics[width = 0.9\textwidth]{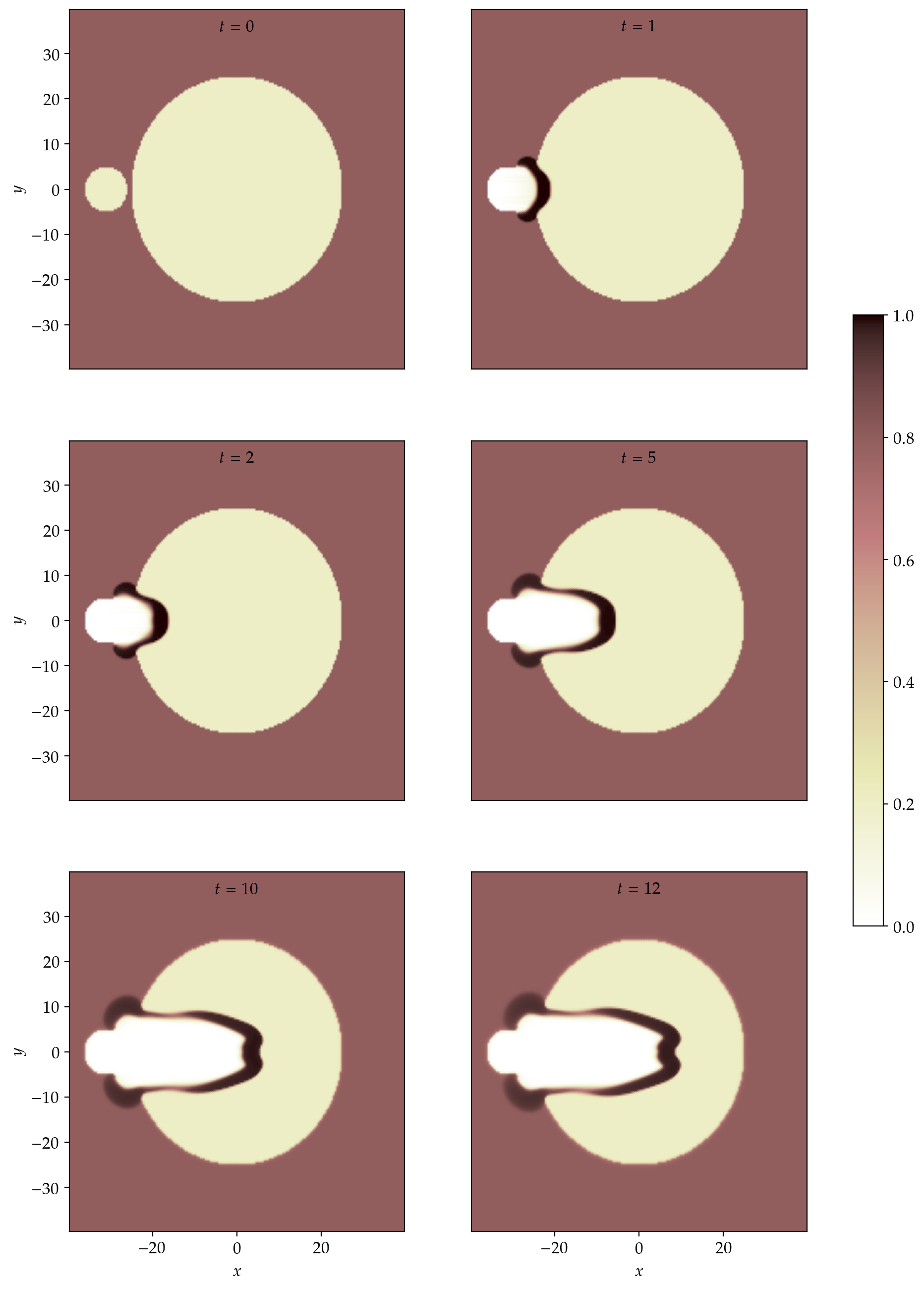}
    \caption{Example 6: pseudocolor plots of the density for different times}
    \label{fig:den-circ-col}
\end{figure}

\begin{remark}
    In comparison to \cite{MP17}, we introduce a background density in \eqref{eqn:eg-6-den} in order to make sure that the solver gets initialized properly, as initializing with the background being vacuum causes the solver to break down. However, we remark that the value of the background density does not affect outcome of the numerical experiment in any capacity.
\end{remark}

\subsection*{Example 7}
\label{subsec:eg-7}
This test case is to illustrate the behaviour of the scheme in the incompressible regime. We consider the initial data from \cite{DT11} and modify it to our purpose. The domain is $\Omega = \lbrack 0,1\rbrack\times\lbrack0, 1\rbrack$ and the initial data reads 
\begin{align}
    &\vrho_\epso(x,y) = 1 - \veps e^{-(x^2 + y^2)},\quad q^x_\epso(x,y) = \sin(2\pi(x - y)) + \veps^2\sin(2\pi(x + y)),\\
    &q^y_\epso(x,y) = \sin(2\pi(x - y)) + \veps^2\cos(2\pi(x + y)).
\end{align}
In comparison to \cite{DT11}, we have modified the initial density so that it is strictly less than 1 for each $\veps > 0$, and also to ensure that $\vrho_\epso\to 1$ as $\veps\to 0$. We set $\veps = 10^{-6}$ and also set $M_x = M_y = 200$. We consider a final time of $t = 0.02$ and present the results in Figure \ref{fig:deg-tang-prob}. In Figure \ref{subfig:deg-den}, we present the deviation of density from $1$, i.e. $1-\vrho_\veps$, and we can observe that it is near 0. In addition, we also report that the $L^1$-norm of $1 - \vrho_\veps$ is $\approx 10^{-7}$. Further, in Figures \ref{subfig:deg-vx} and \ref{subfig:deg-vy}, we present the $x$ and $y$-velocity profiles respectively and we can see that they agree with the results presented in \cite{DT11}. Thus, we can conclude that the present scheme is able to capture the solutions even in the incompressible regime.

\begin{figure}
    \begin{subfigure}{0.3\textheight}
        \includegraphics[width = \textwidth]{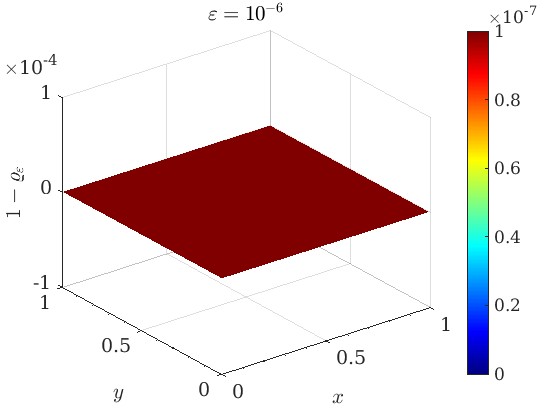}
        \caption{Deviation of density from 1}
        \label{subfig:deg-den}    
    \end{subfigure}
    \begin{subfigure}{0.3\textheight}
        \includegraphics[width = \textwidth]{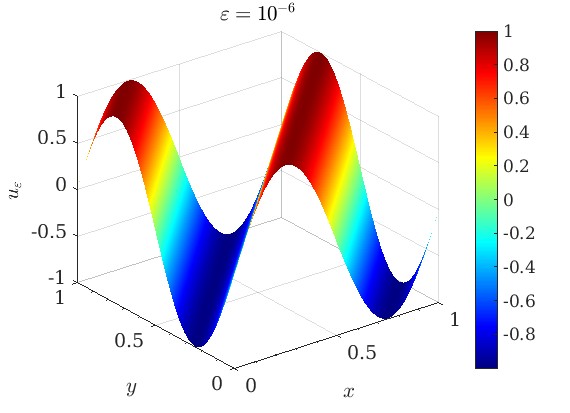}
        \caption{$x$-component of velocity.}
        \label{subfig:deg-vx}    
    \end{subfigure}
    \begin{subfigure}{0.3\textheight}
        \includegraphics[width = \textwidth]{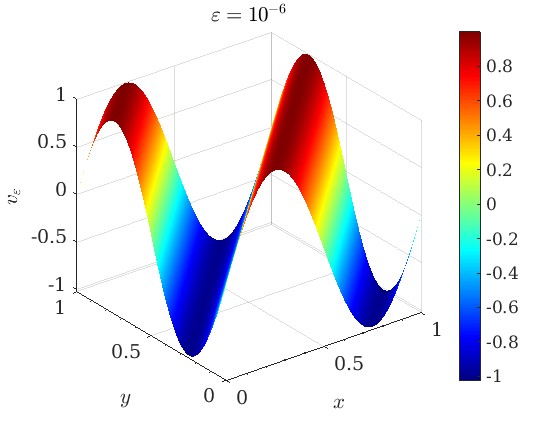}
        \caption{$y$-component of velocity.}
        \label{subfig:deg-vy}    
    \end{subfigure}
    \caption{Example 7: surface plots of the deviation of density and velocities}
    \label{fig:deg-tang-prob}
\end{figure}

\section{Conclusion}
\label{sec:conc}

In this work, we have considered the isentropic Euler system with the congestion pressure law and have designed and analyzed a finite volume scheme for the same. The choice of the pressure law imposes a constraint on the density of the form $0\leq \vrho <1$, with the pressure law also containing a small parameter $\veps$ in order to adjust the stiffness of this density constraint. The proposed scheme is semi-implicit in time and upwind in space, and it possesses key properties such as positivity of density, energy stability and further, we have also proven that the numerical densities generated by the scheme satisfy the constraint at the discrete level. We achieve the energy stability by introducing an appropriate velocity shift in the convective fluxes of the mass and momentum balances. By means of numerical case studies, the scheme is shown to be robust enough to approximate the solutions accurately for every $\veps > 0$. In addition, the AP property of the scheme is verified using specific numerical experiments, which showcases the ability of the scheme to appropriately capture the dynamics of the limit system when $\veps\sim 0$.

\bibliography{ref}
\bibliographystyle{abbrv}
\end{document}